\g@addto@macro\bfseries{\boldmath}
\numberwithin{equation}{section}
\theoremstyle{plain}
    \newtheorem{theorem}[equation]{Theorem}
    \newtheorem{lemma}[equation]{Lemma}
    \newtheorem{corollary}[equation]{Corollary}
    \newtheorem*{theorem*}{Theorem}
    \newtheorem*{proposition*}{Proposition}
    \newtheorem*{corollary*}{Corollary}
    \newtheorem*{lemma*}{Lemma}
    \newtheorem*{conjecture*}{Conjecture}
    \newtheorem{definition-theorem}[equation]{Definition/Theorem}
    \newtheorem{definition-lemma}[equation]{Definition/Lemma}
\theoremstyle{definition}
    \newtheorem{definition}[equation]{Definition}
    \newtheorem{example}[equation]{Example}
    \newtheorem{examples}[equation]{Examples}
    \newtheorem{remark}[equation]{Remark}
    \newtheorem{hypotheses}[equation]{Hypotheses}
    \newcommand{\C}{\mathbb{C}}
   	\renewcommand{\phi}{\varphi}
	\let\epsilon\varepsilon\let\smallepsilon\oldepsilon
    \newcommand{\Compact}{\operatorname{K}}
    \newcommand{\Multiplier}{\operatorname{M}}
    \newcommand{\alg}{}
   \newcommand{\functor}{\mathcal}
\newcommand{\into}{\hookrightarrow}
\newcommand{\onto}{\twoheadrightarrow}
\newcommand{\restrict}{\raisebox{-.5ex}{$|$}}
\newcommand{\id}{\mathrm{id}}
\newcommand{\dual}{\vee}
  \renewcommand{\prod}{\bigsqcap}
    \DeclareMathOperator{\Hom}{Hom}
    \DeclareMathOperator{\lspan}{span}
	\DeclareMathOperator{\image}{image}
 \newcommand{\h}{\mathrm{h}}
	\DeclareMathOperator{\CMod}{CM}
	\DeclareMathOperator{\CComod}{CC}
	\DeclareMathOperator{\Prim}{Prim}
	\newcommand{\Glue}{\operatorname{Glue}}
	\DeclareMathOperator{\Pic}{Pic}
	\DeclareMathOperator{\Centre}{Z}
	\DeclareMathOperator{\Eq}{Eq}
\title{Gluing Hilbert $C^*$-modules over the primitive ideal space}
\author{Tyrone Crisp }
\date{June 2020}
\address{Department of Mathematics \& Statistics, University of Maine.
5752 Neville Hall, Room 333.
Orono, ME 04469 USA}
\email{tyrone.crisp@maine.edu}
\subjclass[2010]{46L08 (16T15)}
\keywords{Hilbert modules, Haagerup tensor product, Picard group}
\begin{document}

\begin{abstract}
We show that the gluing construction for Hilbert modules introduced by Raeburn in his computation of the Picard group of a continuous-trace $C^*$-algebra (\textit{Trans. Amer. Math. Soc.}, 1981) can be applied to arbitrary $C^*$-algebras, via an algebraic argument with the Haagerup tensor product. We put this result into the context of descent theory by identifying categories of gluing data for Hilbert modules over $C^*$-algebras with categories of comodules over $C^*$-coalgebras, giving a Hilbert-module version of a standard construction from algebraic geometry. As a consequence we show that if two $C^*$-algebras have the same primitive ideal space $T$, and are Morita equivalent up to a $2$-cocycle on $T$, then their Picard groups relative to $T$ are isomorphic.
\end{abstract}

\maketitle

\section{Introduction}\label{sec:intro}

In \cite{Crisp-descent} we showed how the categorical technique of {comonadic descent} can be applied to yield equivalences between categories of Hilbert modules over $C^*$-algebras on the one hand, and categories of Hilbert comodules over a certain kind of $C^*$-coalgebras on the other. Several  interesting invariants of $C^*$-algebras---for instance, Morita equivalence classes, $K$-theory, and Picard groups---can be defined   in terms of the category of Hilbert modules, and so the equivalence between modules and comodules  gives a new point of view from which to study these invariants.

The usefulness of this approach depends, of course, upon a good understanding of the coalgebras and comodules in question. These are defined in terms of the Haagerup tensor product, whose strong universal properties make it, of necessity, rather abstract. The purpose of this paper is to present an example in which Hilbert comodules can be described explicitly in terms of standard Hilbert-module theory, and to demonstrate how the equivalence between modules and comodules gives rise to recognisably $C^*$-algebraic results. 

Our results generalise work of Raeburn \cite{Raeburn-Picard} on continuous-trace $C^*$-algebras. Let $A$ be an arbitrary $C^*$-algebra, and let $ (F_i)_{i\in I}$ be a locally finite closed cover of the primitive ideal space of $A$. The inclusion of $C^*$-algebras $A\into \bigoplus_{i\in I}A\restrict_{F_i}$ yields, via the machinery of \cite{Crisp-descent}, a $C^*$-coalgebra $C$ and a unitary equivalence  between the category of Hilbert modules over $A$ and the category of Hilbert comodules over $C$. We will prove that this comodule category can be identified, via a conceptually straightforward but technically somewhat fiddly construction, with a category of gluing data for Hilbert modules, an object of the latter category being a collection $(Z_i)_{i\in I}$ of Hilbert modules over the quotients $A\restrict_{F_i}$, together with isomorphisms $Z_j\restrict_{F_i\cap F_j}\xrightarrow{\cong} Z_i\restrict_{F_i\cap F_j}$ over the intersections $F_i\cap F_j$ satisfying a natural associativity condition on triple intersections. The equivalence between modules and comodules implies that every such gluing datum can be assembled into a Hilbert module over $A$, and that two gluing data are isomorphic if and only if the glued modules are isomorphic. Applying this to Morita equivalences yields a generalisation of (part of) \cite[Theorem 2.1]{Raeburn-Picard} to arbitrary $C^*$-algebras. (We note that this is not the same generalisation as the one alluded to in \cite[p.197, Remark]{Raeburn-Picard}: for one thing, the Morita equivalences occurring in our results all act trivially on the primitive ideal space, cf.~\cite[p.192, Remark]{Raeburn-Picard}.)

The construction through which a gluing datum becomes a Hilbert module is the same as the one appearing in \cite{Raeburn-Picard} (and it is analogous to similar gluing constructions appearing in many other contexts). Our method for proving that this construction yields an equivalence of categories is, however, quite different to that of \cite{Raeburn-Picard}: our proof relies on the  exactness properties of the Haagerup tensor product established by Anantharaman-Delaroche and Pop \cite{An-Pop}, together with some algebraic manipulations borrowed from the parallel algebraic-geometric setting of descent of along Zariski coverings (as explained for instance in \cite[Section 6.2]{BLR}), and some techniques from \cite{Crisp-descent}. The Haagerup tensor product does not commute with direct sums, and this necessitates some contortions when adapting the algebraic arguments to the $C^*$-algebraic setting. A noteworthy feature of our proof is that the question of whether or not the primitive ideal space is Hausdorff never arises.

The paper is organised as follows. In Section \ref{sec:basic} we set up the basic definitions and state our first main result, Theorem \ref{thm:main}. In Section \ref{sec:technical} we briefly review some background on operator modules and the Haagerup tensor product, and we establish some technical results that will be used in the proof of Theorem \ref{thm:main}. The proof itself occupies most of Section \ref{sec:gluing}. We have tried to make this paper readable independently of the abstract machinery of \cite{Crisp-descent}, and this has led to some redundancy: specifically, Lemmas \ref{lem:image-eta} and \ref{lem:kernels} are special cases of results from \cite{Crisp-descent}, and we have reproduced  the proofs here in somewhat abridged form. For the same reason we have avoided mention of coalgebras and comodules in most of the paper, the exception being Section \ref{subsec:comodules} in which we explain how Theorem \ref{thm:main} and its proof relate to the module-comodule equivalences of \cite{Crisp-descent}. Finally, in Section \ref{sec:Picard} we apply Theorem \ref{thm:main} to obtain our generalisation (Corollary \ref{cor:Picard}) of Raeburn's theorem on the Picard group.

\section{Pulling apart Hilbert modules}\label{sec:basic}

\begin{hypotheses}\label{hyp} Throughout the paper $A$ is a $C^*$-algebra, and $\mathfrak F=(F_i)_{i\in I}$ is locally finite closed covering of the primitive ideal space $\Prim A$. That is, $\mathfrak F$ is a family of subsets of $\Prim A$ having the following properties:
\begin{enumerate}[\rm(a)]
\item $\mathfrak F$ is a \emph{covering} of $\Prim A$: that is, $\Prim A=\bigcup_{i\in I} F_i$. 
\item Each $F_i$ is \emph{closed} in the Jacobson topology; thus for each $i\in I$ the intersection $J_i\coloneqq \bigcap_{J\in F_i} J$ is a closed, two-sided ideal of $A$. 
\item The covering $\mathfrak{F}$ is \emph{locally finite}: each $J\in \Prim A$ has an open neighbourhood $U$ with $U\cap F_i=\emptyset$ for all but finitely many $i$. 
\end{enumerate}
\end{hypotheses}

We shall assume that the reader is familiar with the basic language and theory of Hilbert modules (by which we mean modules with a $C^*$-algebra-valued inner product, and not Hilbert-space representations). Expositions of the theory can be found in \cite{RW}, \cite{Lance}, and \cite[Chapter 8]{BLM}, for example. Our Hilbert modules will always be right modules, except when we come to discuss Morita equivalences in Section \ref{sec:Picard}.

\begin{definition}\label{def:restriction}
For each right Hilbert $A$-module  $X$, and each $i\in I$, the subset 
\[
XJ_i\coloneqq \{ xa\in X\ |\ x\in X,\ a\in J_i\}
\]
is a norm-closed $A$-submodule of $X$ (by the Cohen-Hewitt factorisation theorem; cf.~\cite[Proposition 2.33]{RW}), and we let $X\restrict_{F_i}$ denote the quotient module:
\[
X\restrict_{F_i}\coloneqq X/ (XJ_i).
\]
The quotient mapping $X\onto X\restrict_{F_i}$ is denoted by $x\mapsto x\restrict_{F_i}$. This applies in particular to $X=A$; the quotients $A\restrict_{F_i}=A/J_i$ are $C^*$-algebras.

For each bounded $A$-module map $\alpha:X\to Y$ we denote by $\alpha\restrict_{F_i}$ the induced map $\alpha\restrict_{F_i}:X\restrict_{F_i}\to Y\restrict_{F_i}$ defined by $\alpha\restrict_{F_i}(x\restrict_{F_i}) \coloneqq \left( \alpha(x)\right)\restrict_{F_i}$.

We may regard $X\restrict_{F_i}$ either as an $A$-module, or as an $A\restrict_{F_i}$-module. Note that $X\restrict_{F_i}$ is not a Hilbert module over $A$, but it is a Hilbert module over $A\restrict_{F_i}$: the inner product is
\[
\left\langle \left. x\restrict_{F_i}\, \right| \, x'\restrict_{F_i} \right\rangle_{A\restrict_{F_i}} \coloneqq \left( \langle x | x'\rangle_A\right)\restrict_{F_i}.
\]
This Hilbert $A\restrict_{F_i}$-module is unitarily isomorphic to the Hilbert-module tensor product $X\otimes^{*}_A A\restrict_{F_i}$, via the mapping
\[
X\otimes^{*}_A A\restrict_{F_i} \xrightarrow{ x\otimes a\restrict_{F_i}\mapsto (xa)\restrict_{F_i}} X\restrict_{F_i}.
\] 
If $\alpha:X\to Y$ is an adjointable map of Hilbert $A$-modules, then $\alpha\restrict_{F_i}:X\restrict_{F_i}\to Y\restrict_{F_i}$ is an adjointable map of Hilbert $A\restrict_{F_i}$-modules, with $(\alpha\restrict_{F_i})^* = \alpha^*\restrict_{F_i}$.

All of these definitions also apply to intersections of two or more $F_i$s. To save space we write $F_{ij}\coloneqq F_i\cap F_j$, etc. Thus, for instance, if $X$ is a Hilbert $A$-module then for each $(i,j)\in I^2$ the quotient $X\restrict_{F_{ij}} = X/(X(J_i+J_j))$ is a Hilbert $A\restrict_{F_{ij}}$-module. Likewise, if $Y$ is a Hilbert $A\restrict_{F_i}$-module, then $Y\restrict_{F_{ij}} = Y/ (Y (J_i+J_j)\restrict_{F_i})$ is a Hilbert $A\restrict_{F_{ij}}$-module.
\end{definition}

\begin{remark}\label{rem:quotients}
The assertion that $X\restrict_{F_i}$ is a Hilbert module over $A\restrict_{F_i}$ includes the somewhat subtle fact that the norm induced on the quotient module $X\restrict_{F_i}$ by the given inner product is equal to the quotient norm; see \cite[Proposition 3.25]{RW}. Another proof of this fact, using operator modules and the Haagerup tensor product, is given in Remark \ref{rem:quotients-2}.
\end{remark}

Throughout this paper we shall use the language of $C^*$-categories, as explained in \cite[Section 1]{GLR}.

\begin{definition}
We let $\CMod(A)$ denote the category of right Hilbert $A$-modules, with adjointable maps as morphisms. We equip $\CMod(A)$ with its canonical $C^*$-category structure: each  morphism space $\Hom_{\CMod(A)}(X,Y)$ is given the operator norm, and the $*$-operation $\Hom_{\CMod(A)}(X,Y)\to \Hom_{\CMod(A)}(Y,X)$ is given by taking adjoints. 
\end{definition}

\begin{definition}\label{def:Glue}
Let $\mathfrak F$ be a locally finite closed covering of $\Prim A$. We define a $C^*$-category $\Glue(A,\mathfrak F)$ of \emph{gluing data for Hilbert modules} as follows. An object of $\Glue(A,\mathfrak F)$ is a pair $(Z,\zeta)$ where:
\begin{enumerate}[$\square$]
\item $Z=(Z_i)_{i\in I}$ is a collection of Hilbert modules: each $Z_i$ is a right Hilbert module over $A\restrict_{F_i}$.
\item $\zeta = (\zeta_{ij})_{(i,j)\in I^2}$ is a collection of unitary isomorphisms $\zeta_{ij}:Z_j\restrict_{F_{ij}}\xrightarrow{\cong} Z_i\restrict_{F_{ij}}$ of Hilbert $A\restrict_{F_{ij}}$-modules.
\item For each triple $(i,j,k)\in I^3$ we have an equality of maps $Z_k\restrict_{F_{ijk}}\to Z_i\restrict_{F_{ijk}}$,
\begin{equation*}\label{eq:zeta-assoc}
\zeta_{ik}\restrict_{F_{ijk}} = \zeta_{ij}\restrict_{F_{ijk}}\circ \zeta_{jk}\restrict_{F_{ijk}}.
\end{equation*}
\end{enumerate}
A morphism $(Z,\zeta)\to (W,\omega)$ in $\Glue(A,\mathfrak F)$ is a family $\alpha=(\alpha_i)_{i\in I}$ of adjointable maps $\alpha_i:Z_i\to W_i$ of Hilbert $A\restrict_{F_i}$-modules, such that $\sup_{i\in I}\|\alpha_i\|<\infty$, and such that
\[
\alpha_i\restrict_{F_{ij}}\circ \zeta_{ij} = \omega_{ij}\circ \alpha_j\restrict_{F_{ij}}
\] 
for all $(i,j)\in I^2$. We give $\Hom( (Z,\zeta), (W,\omega))$  the norm 
$
\lVert (\alpha_i)_{i\in I} \rVert \coloneqq \sup_{i\in I} \lVert \alpha_i \rVert,
$
and we define a $*$-operation 
\[
\ast: \Hom((Z,\zeta),(W,\omega))\to \Hom((W,\omega),(Z,\zeta))
\]
 by $(\alpha_i)_{i\in I}^*\coloneqq (\alpha_i^*)_{i\in I}$. It is easily checked these definitions make $\Glue(A,\mathfrak F)$ into a $C^*$-category.
\end{definition}

\begin{remark} 
Note that for each object $(Z,\zeta)$ of $\Glue(A,\mathfrak F)$, and for each $i\in I$, we have $\zeta_{ii}=\id_{Z_i}$. Indeed, $\zeta_{ii}$ is assumed to be unitary, and the identity $\zeta_{ik}\restrict_{F_{ijk}} = \zeta_{ij}\restrict_{F_{ijk}}\circ \zeta_{jk}\restrict_{F_{ijk}}$ for $i=j=k$ says that $\zeta_{ii}$ is an idempotent. This implies in turn that $\zeta_{ij}^*=\zeta_{ji}$ for all $(i,j)\in I$.
\end{remark}

Every Hilbert module over $A$ can be pulled apart to give a gluing datum:

\begin{definition}\label{def:P}
For each Hilbert $A$-module $X$, and each $(i,j)\in I^2$, we denote by $\kappa^X_{ij}:X\restrict_{F_j}\restrict_{F_{ij}}\to X\restrict_{F_i}\restrict_{F_{ij}}$ the canonical unitary isomorphism,
\[
\kappa^X_{ij}: \left(X\restrict_{F_j}\right) \restrict_{F_{ij}} \xrightarrow{ (x\restrict_{F_j})\restrict_{F_{ij}} \mapsto (x\restrict_{F_i})\restrict_{F_{ij}}} \left( X\restrict_{F_j}\right)\restrict_{F_{ij}}.
\]
We then define a $*$-functor $\functor{P}:\CMod(A)\to \Glue(A,\mathfrak F)$ as follows: for each Hilbert $A$-module $X$ we set
\[
\functor{P} X \coloneqq \left( \left(X\restrict_{F_i}\right)_{i\in I}, \left(\kappa^X_{ij}\right)_{(i,j)\in I^2} \right)
\]
and for each adjointable map $\alpha:X\to Y$ of Hilbert $A$-modules we set
\[
\functor{P}\alpha \coloneqq \left( \alpha \restrict_{F_i}\right)_{i\in I} : \functor{P}X\to \functor{P}Y.
\]
\end{definition}

\begin{theorem}\label{thm:main}
The functor $\functor{P}:\CMod(A)\to \Glue(A,\mathfrak F)$ is a unitary equivalence of $C^*$-categories. \end{theorem}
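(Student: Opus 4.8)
The plan is to prove that $\functor{P}$ is a unitary equivalence by checking the two standard criteria: (i) $\functor{P}$ is \emph{unitary}, meaning it induces isometric $*$-linear bijections on morphism spaces (equivalently: full, faithful, and norm-preserving and $*$-preserving); and (ii) $\functor{P}$ is essentially surjective, i.e.\ every gluing datum $(Z,\zeta)$ is isomorphic to $\functor{P}X$ for some Hilbert $A$-module $X$. The $*$-preservation is immediate from $(\alpha\restrict_{F_i})^*=\alpha^*\restrict_{F_i}$. For the rest I would first treat fullness and faithfulness together, and then construct the glued module, which I expect to be the substantive part.

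For faithfulness and norm-preservation, the key fact is that an adjointable $\alpha\colon X\to Y$ is determined by, and has norm controlled by, the family $(\alpha\restrict_{F_i})_{i\in I}$: since $\bigcap_i J_i=0$ (as $\mathfrak F$ covers $\Prim A$, the ideals $J_i$ have trivial common intersection), the map $X\to\prod_i X\restrict_{F_i}$ is injective, giving faithfulness; and an inner-product/positivity estimate together with $\|a\|=\sup_i\|a\restrict_{F_i}\|$ for $a\in A$ (again because the $F_i$ cover $\Prim A$) should give $\|\alpha\|=\sup_i\|\alpha\restrict_{F_i}\|$, hence norm-preservation. The natural tool here is the identification $X\restrict_{F_i}\cong X\otimes^*_A A\restrict_{F_i}$ recorded in Definition~\ref{def:restriction}, which lets one phrase these statements as facts about the Haagerup/Hilbert tensor product and invoke the exactness results of Anantharaman-Delaroche--Pop \cite{An-Pop} alluded to in the introduction; in particular the kernel computations that the introduction flags as Lemmas~\ref{lem:image-eta} and~\ref{lem:kernels}.

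The heart of the matter is essential surjectivity: given $(Z,\zeta)$, build $X$. The construction is the classical one (as in Raeburn \cite{Raeburn-Picard}): set
\[
X \coloneqq \Bigl\{ (z_i)_{i\in I}\in \prod_{i\in I} Z_i \ \Bigm|\ \zeta_{ij}\bigl(z_j\restrict_{F_{ij}}\bigr)=z_i\restrict_{F_{ij}} \text{ for all } (i,j)\in I^2,\ \text{and } \textstyle\sup_i\|z_i\|<\infty \Bigr\},
\]
with the pointwise $A$-action (through the quotient maps $A\to A\restrict_{F_i}$) and inner product $\langle (z_i)\,|\,(w_i)\rangle\coloneqq$ the unique $a\in A$ with $a\restrict_{F_i}=\langle z_i\,|\,w_i\rangle_{A\restrict_{F_i}}$ — here one must check that the compatible family of inner products indeed patches to an element of $A$ and that $X$ so defined is complete and nondegenerate. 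Then one shows $\functor{P}X\cong(Z,\zeta)$, i.e.\ that restriction $X\to X\restrict_{F_i}$ lands in and surjects onto $Z_i$ compatibly with the $\zeta_{ij}$. Surjectivity of $X\restrict_{F_i}\to Z_i$ is exactly the gluing/patching assertion and is where local finiteness of $\mathfrak F$ enters: using a partition-of-unity-type argument over the locally finite cover one lifts an element of $Z_i$ to a global compatible family. I expect this patching step — and the parallel verification that the inner products on the $Z_i$ assemble into an $A$-valued inner product — to be the main obstacle, precisely because the Haagerup tensor product does not commute with the infinite direct sums implicit in $\prod_i Z_i$, so the clean algebraic-geometry argument for Zariski descent (cf.\ \cite[Section~6.2]{BLR}) has to be rerouted through the boundedness conditions and the exactness properties of $\otimes_h$; this is the "contortion" the introduction warns about, and carrying it out carefully is the content of Section~\ref{sec:gluing}.
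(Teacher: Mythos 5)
There are two genuine gaps, one in the construction and one at the heart of the argument. First, your glued module is taken inside the bounded product $\prod_i Z_i$ with a $\sup$-norm condition, but it must be taken inside the $C_0$-direct sum $\bigoplus_i Z_i$: with your definition the proposed $A$-valued inner product simply does not exist. For example, take $A=C_0(\R)$, $F_n=[n,n+1]$, and $(Z,\zeta)=\functor{P}A$; the compatible bounded family $(1\restrict_{F_n})_n$ lies in your $X$, yet the family of inner products $(1\restrict_{F_n})_n$ patches to no element of $C_0(\R)$ (your $X$ would be $C_b(\R)$, not a Hilbert $C_0(\R)$-module). In the paper it is precisely membership in $B=\bigoplus_i A\restrict_{F_i}$ together with compatibility that forces $\langle z\,|\,z'\rangle_B\in A$ (Example \ref{example:eta-A} and Lemma \ref{lem:GZ-over-A}), and local finiteness is what guarantees that $\functor{P}X$ lands in the $C_0$-sum at all (Lemma \ref{lem:A-in-B}). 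Also, fullness of $\functor{P}$ is announced but never argued; it needs the gluing of morphisms together with adjointability of the glued map (cf.\ Lemma \ref{lem:GZ-morphisms}), which your outline omits.

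Second, and more seriously, the step you defer to ``a partition-of-unity-type argument over the locally finite cover'' is exactly the content of the theorem, and no such argument is available in this generality: the $F_i$ are \emph{closed} subsets of a generally non-Hausdorff $\Prim A$, so there is no partition of unity subordinate to $\mathfrak F$ (the paper stresses that Hausdorffness is never used). The paper replaces this by a genuinely different mechanism: one defines $\delta(z)=\sum_{j}\sum_{i}\phi^Z_{ij}\circ\zeta_{ij}(z_j\restrict_{F_{ij}})$ with values in $Z\otimes^{\h}_A B$, proves convergence of this sum by embedding $Z\otimes^{\h}_A B$ completely isometrically into the amalgamated free product of linking algebras $L\ast_A L$ (Lemma \ref{lem:delta}); establishes the algebraic identities $\epsilon^Z\circ\delta=\id_Z$, $(\delta\otimes\id_B)\circ\delta=(\eta^Z\otimes\id_B)\circ\delta$, and $\functor{G}(Z,\zeta)=\ker(\eta^Z-\delta)$ (Lemma \ref{lem:delta-algebra}); and then uses the weak expectation $\iota\colon B\to A^{\dual\dual}$ of Lemma \ref{lem:A-in-B} in a duality/pseudoinverse argument to identify $\ker\bigl((\eta^Z-\delta)\otimes\id_B\bigr)$ with $\functor{G}(Z,\zeta)\otimes^{\h}_A B$ (Lemma \ref{lem:kernels}), from which surjectivity of $\epsilon^Z\colon\functor{G}(Z,\zeta)\otimes^{\h}_A B\to Z$ --- equivalently, surjectivity of $\functor{G}(Z,\zeta)\restrict_{F_i}\to Z_i$ --- follows. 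Citing Lemmas \ref{lem:image-eta} and \ref{lem:kernels} is not enough: without the map $\delta$, its convergence, and the $A^{\dual\dual}$-valued expectation, the lifting of elements of $Z_i$ to global compatible families has no proof, so the essential-surjectivity step remains open in your proposal.
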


That is to say, there is a $*$-functor $\functor{G}:\Glue(A,\mathfrak F)\to \CMod(A)$ and unitary natural isomorphisms $\functor{G}\functor{P}\cong \id_{\CMod(A)}$ and $\functor{P}\functor{G}\cong \id_{\Glue(A,\mathfrak F)}$. Such a functor $\functor{G}$ is defined in Section \ref{subsec:G}, and the proof that $\functor{P}$ and $\functor{G}$ are mutually inverse equivalences occupies Sections \ref{subsec:pf-begin}--\ref{subsec:pf-end}.

\begin{remark}
Theorem \ref{thm:main}, and the proof that we shall give here, are modeled after a basic property of sheaves in algebraic geometry (see \cite[Section 6.2]{BLR}, for instance). It would be interesting to see how our methods fit into the framework of sheaves of $C^*$-algebras and operator modules of Ara and Mathieu \cite{Ara-Mathieu}. 
\end{remark}

\section{Some technical preliminaries}\label{sec:technical}

We continue to assume Hypotheses \ref{hyp}: $A$ is a $C^*$-algebra, and $\mathfrak F= (F_i)_{i\in I}$ is a locally finite closed covering of $\Prim A$.

\subsection{Direct sums of $C^*$-algebras and Hilbert modules}

Throughout this paper we use $\bigoplus$ to denote the $C_0$-direct sum of Banach spaces:
\[
\bigoplus_{i\in I} X_i =\left\{ \left. (x_i)_{i\in I}\in \prod_{i\in I}X_i \ \right|\ \|x_i\| \to 0 \text{ as }i\to\infty\right\}.
\]

\begin{definition}\label{def:B}
Let $B$ be the $C^*$-algebraic direct sum $B=\bigoplus_{i\in I} A\restrict_{F_i}$. Each $A\restrict_{F_i}$ is, in an obvious way, both a subalgebra and a quotient of $B$. We let $\pi^B_i:B\to A\restrict_{F_i}$ be the quotient mapping; for $b\in B$ we also write $b_i \coloneqq \pi^B_i(b)$.  For each $i\in I$ we denote by $1\restrict_{F_i}$ the unit element in the multiplier algebra $\Multiplier(A\restrict_{F_i})$. Using the embedding $A\restrict_{F_i}\into B$ we also regard $1\restrict_{F_i}$ as an element of $\Multiplier(B)$.
\end{definition}

\begin{lemma}\label{lem:A-in-B}
The map $\eta: a\mapsto (a\restrict_{F_i})_{i\in I}$ embeds $A$ as a nondegenerate subalgebra of $B$, and there is a contractive completely positive $A$-bimodule map $\iota:B\to A^{\dual\dual}$ such that the composition $\iota\circ\eta:A\to A^{\dual\dual}$ is the canonical embedding of $A$ into its second dual.
\end{lemma}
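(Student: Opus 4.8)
I would prove the two assertions in turn: first that $\eta$ is an isometric, nondegenerate $*$-homomorphism, and then build $\iota$ out of the central projections of $A^{\dual\dual}$ attached to the ideals $J_i$. For the first claim: for any closed $F\subseteq\Prim A$ and $a\in A$ one has $\|a\restrict_F\|=\sup_{P\in F}\|a\restrict_P\|$, since the natural $*$-homomorphism $A\restrict_F=A/J_F\to\prod_{P\in F}A\restrict_P$, $a\restrict_F\mapsto(a\restrict_P)_{P\in F}$, is injective (as $J_F:=\bigcap_{P\in F}P$), hence isometric. The function $P\mapsto\|a\restrict_P\|$ on $\Prim A$ is lower semicontinuous with $\{P:\|a\restrict_P\|\ge\varepsilon\}$ compact; by local finiteness of $\mathfrak F$ this compact set meets only finitely many $F_i$, and for the remaining $i$ we get $\|a\restrict_{F_i}\|\le\varepsilon$, so $\|a\restrict_{F_i}\|\to 0$ and $\eta(a)\in B$. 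Then $\eta$ is a $*$-homomorphism, injective because $\bigcap_i J_i=\bigcap_{P\in\Prim A}P=0$ (the covering property), hence isometric. For nondegeneracy, given an approximate unit $(e_\lambda)$ of $A$ and $b\in B$ we have $\|\eta(e_\lambda)b-b\|=\sup_i\|e_\lambda\restrict_{F_i}b_i-b_i\|\to 0$ because $\|b_i\|\to 0$ (handle the finitely many large $b_i$ directly and bound the rest by $2\|b_i\|$), and similarly on the right.

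To build $\iota$, let $z_i\in A^{\dual\dual}$ be the central projection with $z_iA^{\dual\dual}=J_i^{\dual\dual}$, the weak-$*$ closure of $J_i$. The bidual $q_i^{\dual\dual}\colon A^{\dual\dual}\to(A\restrict_{F_i})^{\dual\dual}$ of the quotient map is a normal surjective $*$-homomorphism with kernel $J_i^{\dual\dual}=z_iA^{\dual\dual}$, so it restricts to a $*$-isomorphism $(1-z_i)A^{\dual\dual}\xrightarrow{\cong}(A\restrict_{F_i})^{\dual\dual}$; let $\phi_i\colon A\restrict_{F_i}\into A^{\dual\dual}$ be the resulting isometric $*$-homomorphism, which satisfies $\phi_i(a\restrict_{F_i})=(1-z_i)a$ for $a\in A$. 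The crux of the whole argument is the claim that $\bigvee_i(1-z_i)=1$ in $A^{\dual\dual}$. Granting it, well-order $I$ and set $p_i:=(1-z_i)\bigwedge_{j<i}z_j$; a transfinite induction shows the $p_i$ are mutually orthogonal central projections with $p_i\le 1-z_i$ and $\sum_i p_i=1$ strongly. Define
\[
\iota\bigl((b_i)_{i\in I}\bigr):=\sum_{i\in I}p_i\,\phi_i(b_i).
\]
Since $\|p_i\phi_i(b_i)\|\le\|b_i\|\to 0$ and the $p_i$ are orthogonal central projections, a finite partial sum has norm $\max_{i\in S}\|p_i\phi_i(b_i)\|$, so the series converges in norm and $\|\iota\|\le 1$; each $b\mapsto p_i\phi_i(b_i)$ is a $*$-homomorphism $B\to A^{\dual\dual}$, so the finite partial sums, hence their norm-limit $\iota$, are completely positive; the $A$-bimodule identities reduce to multiplicativity of $\phi_i$, centrality of the $p_i$ (with $p_i(1-z_i)=p_i$), and norm-continuity of multiplication; and $\iota(\eta(a))=\sum_i p_i(1-z_i)a=(\sum_i p_i)a=a$, using that this series converges in norm (as $\|a\restrict_{F_i}\|\to 0$) while its partial sums $(\sum_{i\in S}p_i)a$ converge strongly to $1\cdot a=a$, so the two limits agree.

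The remaining, and only genuinely delicate, point is $\bigvee_i(1-z_i)=1$: the obstacle is that passing to biduals does not commute with infinite intersections of ideals, so $\bigwedge_i z_i$ need not be the central projection of $\bigcap_i J_i=0$. I would circumvent this by using local finiteness to reduce, near each point, to a finite intersection of the $J_i$ — for which one does have $z_{J\cap J'}=z_Jz_{J'}$. Concretely, recall the inclusion-preserving bijection between closed ideals $J$ of $A$ and open sets $O_J=\{P:J\not\subseteq P\}$ of $\Prim A$, under which finite intersections of ideals match intersections of open sets, $\overline{\sum_\alpha J_\alpha}$ matches $\bigcup_\alpha O_{J_\alpha}$, and $A$ matches $\Prim A$, all compatibly with $J\mapsto z_J$ (which preserves suprema and finite infima). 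Given $P\in\Prim A$, pick by local finiteness an open neighbourhood $U\ni P$ and a finite $S\subseteq I$ with $U\cap F_i=\emptyset$ for $i\notin S$, whence $U\subseteq\bigcup_{i\in S}F_i$ since $\mathfrak F$ covers $\Prim A$; if $L$ is the closed ideal with $O_L=U$ then $O_L\cap O_{\bigcap_{i\in S}J_i}=U\cap(\Prim A\setminus\bigcup_{i\in S}F_i)=\emptyset$, so $L\cap\bigcap_{i\in S}J_i=0$ and therefore $z_L\le 1-\bigwedge_{i\in S}z_i=\bigvee_{i\in S}(1-z_i)\le\bigvee_i(1-z_i)$. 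Taking the supremum over $P$ and using that the neighbourhoods $U$ cover $\Prim A$, so $\bigvee_P z_L=z_{\overline{\sum_P L}}=z_A=1$, yields $\bigvee_i(1-z_i)=1$. Everything else in the proof is routine bookkeeping; the subtlety is precisely this interaction between biduals and the topology of $\Prim A$, which is what local finiteness (together with the covering property) is there to handle.
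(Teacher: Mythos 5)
Your proof is correct, and for the substantive half of the lemma it takes a genuinely different route from the paper. The first half (that $\eta$ lands in the $C_0$-sum, is injective hence isometric, and is nondegenerate) is essentially the paper's argument with the citation to [RW, Lemma A.30] unpacked: the paper likewise deduces $\|a\restrict_{F_i}\|\to 0$ from the vanishing-at-infinity of $P\mapsto\|a\restrict_{P}\|$ together with local finiteness, and handles nondegeneracy by the same approximate-unit estimate. For the weak expectation, however, the paper never constructs $\iota$: it invokes Kirchberg's characterisation of relative weak injectivity, by which the existence of a contractive completely positive $A$-bimodule map $B\to A^{\dual\dual}$ restricting to the canonical embedding on $A$ is equivalent to the inclusion $A\otimes^{\alg}_{\C}C\into B\otimes^{\alg}_{\C}C$ being isometric for the maximal norm for every $C^*$-algebra $C$, and then verifies this tensor condition using that the maximal tensor product commutes with direct sums. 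You instead build $\iota$ by hand inside $A^{\dual\dual}$, from the central projections $1-z_i$ supporting the quotients $A\restrict_{F_i}$, disjointified by a well-ordering into an orthogonal family $(p_i)$; the resulting sum $\iota(b)=\sum_i p_i\phi_i(b_i)$ converges in norm precisely because $\|b_i\|\to 0$ and the $p_i$ are orthogonal and central, and your verifications of contractivity, complete positivity, the bimodule identities, and $\iota\circ\eta=\id$ (reconciling the norm limit of $\sum_{i\in S}p_i a$ with its strong limit) all go through; indeed your $\iota$ is even a $*$-homomorphism, which is more than the lemma asks. You also correctly isolate the one genuinely delicate point, $\bigvee_i(1-z_i)=1$, which can fail for an infinite family of ideals with zero intersection, and your reduction to finite subfamilies via local finiteness, the ideal/open-set lattice isomorphism for $\Prim A$, and the facts that $J\mapsto z_J$ preserves arbitrary suprema and finite infima, is sound. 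What the paper's route buys is brevity, at the cost of a black box (Kirchberg's theorem); what yours buys is a self-contained, explicit expectation and a transparent view of exactly how the covering and local-finiteness hypotheses act at the level of $A^{\dual\dual}$.
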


We will often suppress the map $\eta$ and just consider $A$ as a subalgebra of $B$.

\begin{proof}
To see that the image of $\eta$ is contained in the $C_0$-direct sum, recall (e.g., from \cite[Lemma A.30]{RW}) that for each $a\in A$ the function $\Prim A \to [0,\infty)$ defined by $J\mapsto \| a\restrict_{\{J\}}\|$ vanishes at infinity. Since the covering $\mathfrak F$ is locally finite, the function $i\mapsto \| a\restrict_{F_i}\|$ likewise vanishes at infinity. The  $*$-homomorphism $\eta$ is injective because the $F_i$ cover $\Prim A$.

For the nondegeneracy we are asserting that $\eta$ sends an approximate unit for $A$ to an approximate unit for $B$; this follows easily from the fact that each element of $B$ can be approximated by elements of finite support.

Finally,  the existence of a map $\iota$ with the given properties is known, by a result of Kirchberg \cite[Proposition 3.1]{Kirchberg-nonsemisplit}, to be equivalent to the assertion that for every $C^*$-algebra $C$ the inclusion $A\otimes^{\alg}_{\C} C \into B\otimes^{\alg}_{\C} C$ is isometric for the maximal tensor-product norm.  This latter property holds true for our $A$ and $B$ because the maximal tensor product commutes with direct sums, and because each of the mappings $A\otimes^{\alg}_{\C} C \to A\restrict_{F_j}\otimes^{\alg}_{\C} C$ is contractive for the $\max$ norm.
\end{proof}

\begin{definition}\label{def:Zi}
For each Hilbert $B$-module $Z$ and each $i\in I$ we define $Z_i\coloneqq Z\cdot 1\restrict_{F_i}$. This $Z_i$ is, in a natural way, a Hilbert $A\restrict_{F_i}$-module. The projection $Z\to Z_i$ is denoted by $\pi^Z_i$; we also write $z_i\coloneqq \pi^Z_i(z)$. In the other direction,  if $(Z_i)_{i\in I}$ is a collection of Hilbert $A\restrict_{F_i}$-modules, then the $C_0$-direct sum $Z=\bigoplus_{i\in I} Z_i$ is a Hilbert $B$-module. These constructions extend to a pair of mutually inverse unitary equivalences of $C^*$-categories, between the category $\CMod(B)$ of Hilbert $B$-modules and adjointable maps, and the category whose objects are collections $(Z_i)_{i\in I}$ of Hilbert $A\restrict_{F_i}$-modules, and whose morphisms are collections $(\alpha_i)_{i\in I}$ of adjointable $A\restrict_{F_i}$-module maps satisfying $\sup_{i\in I} \|\alpha_i\|<\infty$.
\end{definition}

For the next lemma---whose simple proof we omit---recall  that if $X$ is a Hilbert module over $A$ then the Hilbert-module tensor product $X\otimes^{*}_A B$ is the completion of the algebraic tensor product  $X\otimes^{\alg}_A B$ in the norm derived from the $B$-valued inner product $\langle x\otimes b \,|\,  x'\otimes b'\rangle_B \coloneqq b^*\eta(\langle x|x'\rangle_A) b$. This is a Hilbert $B$-module. 

\begin{lemma}\label{lem:X-tensor-B}
If $X$ is a Hilbert module over $A$ then the map 
\[
\Psi^X : X\otimes^{*}_A B \to \bigoplus_{i\in I} X\restrict_{F_i}, \qquad x\otimes b \mapsto \left( x\restrict_{F_i} b_i\right)_{i\in I}
\]
is a unitary isomorphism of Hilbert $B$-modules. \hfill\qed
\end{lemma}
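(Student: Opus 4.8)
The plan is to treat $\Psi^X$ first as a map defined by the stated formula on the algebraic tensor product $X\otimes^{\alg}_A B$, to verify that it preserves the $B$-valued inner products, and then to deduce that it extends to an isometric $B$-linear map on the completion $X\otimes^{*}_A B$ with dense image. Since a surjective isometry of Hilbert modules is automatically unitary, this finishes the proof.

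The first steps are routine. One checks that $x\otimes b\mapsto(x\restrict_{F_i}b_i)_{i\in I}$ is well defined on $X\otimes^{\alg}_A B$: it is $\C$-bilinear; it lands in the $C_0$-direct sum because $\|x\restrict_{F_i}b_i\|\le\|x\|\,\|b_i\|\to0$; and it is $A$-balanced because, for $a\in A$, one has $((xa)\restrict_{F_i})b_i=(x\restrict_{F_i})(a\restrict_{F_i})b_i=(x\restrict_{F_i})(\eta(a)b)_i$ while the left $A$-action on $B$ is through $\eta$. For the inner product one feeds in three facts: the inner product on $\bigoplus_i X\restrict_{F_i}$ as a Hilbert $B$-module is computed coordinatewise (Definition \ref{def:Zi}); $\langle x\restrict_{F_i}\,|\,x'\restrict_{F_i}\rangle_{A\restrict_{F_i}}=(\langle x\,|\,x'\rangle_A)\restrict_{F_i}$ (Definition \ref{def:restriction}); and $\pi^B_i\circ\eta$ is the quotient map $a\mapsto a\restrict_{F_i}$ (Lemma \ref{lem:A-in-B}). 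These give that $\langle\Psi^X(x\otimes b)\,|\,\Psi^X(x'\otimes b')\rangle_B$ has $i$th coordinate $b_i^{*}(\langle x\,|\,x'\rangle_A)\restrict_{F_i}\,b_i'$, which is exactly the $i$th coordinate of $b^{*}\eta(\langle x\,|\,x'\rangle_A)b'$, i.e.\ of the inner product of $x\otimes b$ and $x'\otimes b'$ in $X\otimes^{*}_A B$. So $\Psi^X$ is inner-product preserving on algebraic tensors, hence extends to an isometry $X\otimes^{*}_A B\to\bigoplus_i X\restrict_{F_i}$; it is a $B$-module map on elementary tensors, hence everywhere by continuity.

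What remains — and this is the only point needing a little care — is surjectivity, equivalently density of the image (the domain being complete, an isometry with dense image is onto). Fix $j\in I$ and $a\in A\restrict_{F_j}$, and let $b\in B$ be supported in coordinate $j$ with $b_j=a$; then $\Psi^X(x\otimes b)$ is the tuple with $(x\restrict_{F_j})a$ in slot $j$ and $0$ elsewhere. As $x$ runs over $X$, the element $x\restrict_{F_j}$ runs over all of $X\restrict_{F_j}$, and since $X\restrict_{F_j}$ is a (nondegenerate) Hilbert $A\restrict_{F_j}$-module the products $(x\restrict_{F_j})a$ are total in $X\restrict_{F_j}$; here, because the quotient algebras $A\restrict_{F_j}$ need not be unital, one must genuinely approximate — via an approximate unit for $A\restrict_{F_j}$, or Cohen--Hewitt — rather than simply multiplying by the multiplier $1\restrict_{F_j}$. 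Taking finite sums over $j$, the image of $\Psi^X$ contains a dense subspace of every finite partial direct sum $\bigoplus_{j\in S}X\restrict_{F_j}$ with $S\subseteq I$ finite, and these partial sums are dense in $\bigoplus_{i\in I}X\restrict_{F_i}$ by definition of the $C_0$-direct sum. Hence $\Psi^X$ has dense image, so it is a surjective isometry, i.e.\ a unitary isomorphism of Hilbert $B$-modules.
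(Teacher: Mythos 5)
Your proof is correct, and since the paper explicitly omits the proof of this lemma as routine, your argument — well-definedness and inner-product preservation on algebraic tensors, isometric extension, and density of the image via nondegeneracy of each $X\restrict_{F_j}$ over $A\restrict_{F_j}$ together with density of finitely supported tuples in the $C_0$-direct sum — is precisely the standard verification the author had in mind. The care you take in noting that $1\restrict_{F_j}$ is only a multiplier, so one must use an approximate unit or Cohen--Hewitt, is a correct and worthwhile detail.
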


\subsection{Operator modules and the Haagerup tensor product}

Let us briefly recall the principal facts about operator modules and the Haagerup tensor product that we shall use. See \cite{BLM} for a fuller account.

By an \emph{operator module} over a $C^*$-algebra we shall always mean a nondegenerate operator module, and we shall mean a right module unless otherwise specified. Here are the examples of operator modules that will appear in this paper.

\begin{examples}
\begin{enumerate}[\rm(a)]
\item If $C$ is a $C^*$-subalgebra of a $C^*$-algebra $D$, and if $Y$ is a norm-closed linear subspace of $D$ satisfying $YC=Y$, then $Y$ is an operator module over $C$. 
\item If $Z$ is a Hilbert module over a $C^*$-algebra $C$, then $Z$ is an operator module over $C$, \emph{and} over every nondegenerate $C^*$-subalgebra of $C$, \emph{and} over every $C^*$-algebra having $C$ as a quotient. Every adjointable map of Hilbert  modules is completely bounded as a map of operator modules, and every unitary isomorphism of Hilbert  modules is a completely isometric isomorphism of operator modules. (Here `completely' refers to the canonical norms on the spaces of matrices over an operator module.)
\item If $X$ is an operator $C$-module, and if $X'$ is a closed $C$-submodule of $X$, then the quotient $X/X'$ is an operator $C$-module.
\item In terms of the notation already established in this paper: each Hilbert $B$-module is an operator $A$-module and an operator $B$-module; and each Hilbert $A\restrict_{F_i}$-module is an operator $A$-module and an operator $B$-module. 
\end{enumerate}
\end{examples}

We now turn to the Haagerup tensor product. If $C$ and $D$ are $C^*$-algebras, and if $X$ is a right operator $C$-module and $Y$ an operator $C$-$D$-bimodule, then the \emph{Haagerup tensor product} $X\otimes^{\h}_C Y$ is a right operator $D$-module. Here are the main facts about $\otimes^{\h}$ that we shall use:

\begin{theorem}
\label{thm:Haagerup}
\begin{enumerate}[\rm(a)]
\item For each (nondegenerate, right) operator $C$-module $X$ the map $X\otimes^{\h}_C C\xrightarrow{x\otimes c\mapsto xc} X$ is a completely isometric isomorphism; and similarly for left modules.
\item If $X$ is a Hilbert $C$-module, and if $Y$ is a Hilbert $D$-module equipped with a nondegenerate homomorphism $C\to \Hom_{\CMod(D)}(Y,Y)$, then $X\otimes^{\h}_C Y$ is completely isometrically isomorphic to the Hilbert-module tensor product $X\otimes^{\ast}_C Y$.
\item If $\phi:X'\into X$ is a completely isometric map of operator $C$-modules, and if $\psi:X\to X/\phi(X')$ is the quotient mapping, then for every $Y$ the mapping $\phi\otimes \id:X'\otimes^{\h}_C Y\to X\otimes^{\h}_C Y$ is completely isometric; the image of this map is the kernel of the mapping $\psi\otimes \id : X\otimes^{\h}_C Y \to (X/\phi(X'))\otimes^{\h}_C Y$; and the latter map induces a completely isometric isomorphism 
\[
\left(X\otimes^{\h}_C Y\right )\big/\left( (\phi\otimes\id_Y)(X'\otimes^{\h}_C Y)\right) \xrightarrow{\cong} \left(X/\phi(X')\right)\otimes^{\h}_C Y.
\]
Likewise for complete isometries $\phi:Y'\into Y$ of left operator $C$-modules.
\item Suppose that $C$ is a $C^*$-subalgebra of $L$, and that $X\subseteq L$ is a norm-closed subspace satisfying $XC=C$. Suppose further that $C$ and $D$ are $C^*$-subalgebras of $R$, and that $Y\subseteq R$ is a norm-closed subspace satisfying $Y=CY=YD$. Then the map
\[
X\otimes^{\h}_C Y \to L\ast_C R,\qquad x\otimes y\mapsto x y
\]
is a completely isometric embedding into the amalgamated free product $C^*$-algebra.
\end{enumerate}
\end{theorem}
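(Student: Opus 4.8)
\textbf{Proof plan for Theorem \ref{thm:Haagerup}.}
The statement collects four standard properties of the Haagerup tensor product over $C^*$-algebras, and the plan is to reduce each one to the corresponding known facts about $\otimes^{\h}$ for operator modules, as developed in \cite{BLM}. Throughout, the essential external input is the defining matricial-norm description of the Haagerup tensor product together with its functoriality and the identification of $C$ (viewed as an operator $C$-bimodule) as a unit object. I will treat the four parts in the order (a), (c), (b), (d), because (b) is most easily deduced once (a) and (c) are in hand, and (d) rests on the amalgamated free product construction which is orthogonal to the rest.

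For part (a), the map $X\otimes^{\h}_C C\to X$, $x\otimes c\mapsto xc$, is the canonical one; it is surjective by nondegeneracy of $X$ (here one uses the Cohen--Hewitt factorisation theorem, so that $X=XC$), and its complete contractivity is immediate from the definition of the Haagerup norm. That it is a complete isometry is the ``unit object'' property of $C$ in the category of operator $C$-modules, for which I would cite \cite[1.5.12 or 3.4.11]{BLM} (module Haagerup tensor product with the coefficient algebra). The argument for left modules is identical after transposing sides. For part (c), the crucial point is the exactness of $\otimes^{\h}_C Y$ under completely isometric inclusions with complemented-in-norm quotients; I would invoke the projectivity/injectivity properties of the module Haagerup tensor product established by Anantharaman-Delaroche and Pop \cite{An-Pop}, which give precisely that $\phi\otimes\id$ is completely isometric, that its image is the kernel of $\psi\otimes\id$, and that the induced map on the quotient is a complete isometry. (This is the statement that $X\mapsto X\otimes^{\h}_C Y$ carries a short exact sequence of operator $C$-modules to a short exact sequence of operator $D$-modules, in the completely isometric sense.) The ``likewise'' for left complete isometries $Y'\into Y$ is the symmetric statement.

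Part (b) identifies $X\otimes^{\h}_C Y$ with the Hilbert-module (interior) tensor product $X\otimes^{*}_C Y$ when $X$ is a Hilbert $C$-module and $Y$ a Hilbert $D$-module with a nondegenerate left action of $C$ by adjointable operators. The plan is the standard one: both tensor products receive a canonical complete contraction from the algebraic tensor product $X\otimes^{\alg}_C Y$, so there is a canonical complete contraction $X\otimes^{\h}_C Y\to X\otimes^{*}_C Y$; conversely, writing $X$ as a complemented submodule of a standard free Hilbert module $\bigoplus C$ (or, more carefully, using that $X$ is an orthogonal direct summand of $\ell^2(C)$ by Kasparov's stabilisation, together with part (c) to handle the direct-sum decomposition up to the usual $C_0$-care), one checks on $C\otimes^{\h}_C Y\cong Y\cong C\otimes^{*}_C Y$ via part (a) that the map is isometric, and then transports this through the summand inclusion. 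Alternatively one may cite \cite[3.4.11 or Chapter 8]{BLM} directly, where exactly this identification is proved; I would present the short self-contained argument and give the reference. Finally, part (d) asserts that for $C^*$-subalgebras $C\subseteq L$ and $C,D\subseteq R$ with norm-closed subspaces $X\subseteq L$, $Y\subseteq R$ satisfying $XC=X$ and $CY=YD=Y$, the multiplication map $X\otimes^{\h}_C Y\to L\ast_C R$ into the amalgamated free product is a completely isometric embedding. The plan here is to use the universal property of $L\ast_C R$ to realise $L$ and $R$ compatibly as subalgebras of a single $C^*$-algebra in which their product makes sense, and then to recognise the multiplication map $X\otimes^{\h}_C Y\to L\ast_C R$ as the composite of the inclusion-induced complete isometries $X\otimes^{\h}_C Y\hookrightarrow L\otimes^{\h}_C R$ (by part (c), applied on both variables) with the canonical complete isometry $L\otimes^{\h}_C R\hookrightarrow L\ast_C R$; the latter is a theorem of Pisier on the Haagerup tensor product of $C^*$-algebras and the amalgamated free product, which I would cite.

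The main obstacle is part (d): making precise the identification of $L\otimes^{\h}_C R$ with its image in $L\ast_C R$, and checking that the subspaces $X,Y$ are carried correctly, requires care with the nondegeneracy hypotheses $XC=X$ and $CY=YD=Y$ (which ensure that the restricted multiplication map has the right domain and that part (c) applies verbatim on each slot). A secondary technical point, flagged already in the introduction, is that the Haagerup tensor product does not commute with $C_0$-direct sums, so the stabilisation argument in part (b) must be run through part (c) one summand at a time rather than naively; I would either route around this by citing \cite{BLM} for (b) outright, or include a short lemma handling the direct-sum bookkeeping. Parts (a) and (c) are, by contrast, direct appeals to \cite{BLM} and \cite{An-Pop} respectively, and require essentially no new work.
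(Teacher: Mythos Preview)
Your proposal is correct and takes essentially the same approach as the paper: both treat Theorem~\ref{thm:Haagerup} as a compendium of known results and defer to the literature. The paper's own proof is in fact nothing more than four one-line attributions---to \cite{CES} for (a), to Blecher \cite{Blecher-newapproach} for (b), to \cite{An-Pop} for (c), and to \cite{CES}, \cite{Pisier-Kirchberg}, and \cite{Ozawa} for (d)---so your sketches for each part, and in particular your factorisation of (d) through $L\otimes^{\h}_C R$ and your stabilisation outline for (b), go well beyond what the paper actually supplies; you could safely replace all of that with the bare citations.
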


\begin{proof}
Part (a) is either a theorem (essentially \cite[Corollary 3.3]{CES}) or a definition, depending on how one defines operator modules. Part (b) is due to Blecher \cite[Theorem 4.3]{Blecher-newapproach}. Part (c) is due to Anantharaman-Delaroche and Pop \cite[\S7, Corollary]{An-Pop}. Part (d) is due, in various forms, to Christensen-Effros-Sinclair \cite[Theorem 3.1]{CES}, Pisier \cite[Lemma 1.14]{Pisier-Kirchberg}, and Ozawa \cite[p.515]{Ozawa}. 
\end{proof}

\begin{remark}\label{rem:quotients-2}
Following up on Remark \ref{rem:quotients}, let us give an example of how the extra flexibility afforded by operator modules (especially the existence of quotient modules), coupled with the strong exactness properties of the Haagerup tensor product, can yield conceptual simplifications of Hilbert-module arguments. Let $C$ be a $C^*$-algebra, let $J$ be an ideal of $C$, and let $X$ be a Hilbert $C$-module. Then the quotient norm on $X/XJ$ is the one induced by the $C/J$-valued inner product $\langle x+XJ \, |\, x'+XJ\rangle_{C/J}\coloneqq \langle x|x'\rangle_C +J$. Indeed, parts (a), (c), and (b) (in that order) of Theorem \ref{thm:Haagerup} give completely isometric isomorphisms
\[
X/XJ \cong \left( X\otimes^{\h}_C C\right)\big/ \left( X\otimes^{\h}_C J\right) \cong X\otimes^{\h}_C (C/J) \cong X\otimes^{*}_C (C/J)
\]
and it is easy to check that the $C/J$-valued inner product on $X\otimes^{*}_C (C/J)$ corresponds under these isomorphisms to the given inner product on $X/XJ$.
\end{remark}

Resuming our consideration of $A$, $B$, $\mathfrak F$, etc, as defined above, we shall now prove a series of technical lemmas that will be used in the proof of Theorem \ref{thm:main}. Most of the work is necessitated by the fact that the Haagerup tensor product does not commute with direct sums.

\begin{lemma}\label{lem:nu}
If $Y$ is a Hilbert module over $A\restrict_{F_i}$  then for each $j\in I$ the map
\[
\nu^Y_{ij}:Y\otimes^{\h}_A A\restrict_{F_j} \xrightarrow{\cong} Y\restrict_{F_{ij}},\qquad y\otimes a\restrict_{F_j} \mapsto y\restrict_{F_{ij}} \cdot a\restrict_{F_{ij}}
\]
is a completely isometric isomorphism of operator $A$-modules.
\end{lemma}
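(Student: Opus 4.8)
The plan is to identify $Y\otimes^{\h}_A A\restrict_{F_j}$ with a Hilbert-module tensor product by two applications of Theorem \ref{thm:Haagerup}, and then recognize the result as $Y\restrict_{F_{ij}}$. First I would observe that although $Y$ is a Hilbert module over $A\restrict_{F_i}$, it is also an operator module over $A$ (via the quotient map $A\onto A\restrict_{F_i}$), and similarly $A\restrict_{F_j}$ is an operator $A$-bimodule, so the Haagerup tensor product $Y\otimes^{\h}_A A\restrict_{F_j}$ makes sense. The first step is to rewrite this as a tensor product over $A\restrict_{F_i}$: using $Y = Y\otimes^{\h}_{A\restrict_{F_i}} A\restrict_{F_i}$ from Theorem \ref{thm:Haagerup}(a) and associativity of the Haagerup tensor product, together with the identification $A\restrict_{F_i}\otimes^{\h}_A A\restrict_{F_j}\cong A\restrict_{F_i}\otimes^{*}_A A\restrict_{F_j}\cong A\restrict_{F_{ij}}$ (which itself is Theorem \ref{thm:Haagerup}(b) applied to the $C^*$-algebra $A\restrict_{F_j}$, viewed as a Hilbert module over itself, with the nondegenerate homomorphism $A\to\Multiplier(A\restrict_{F_j})$, and the fact that $A\restrict_{F_i}\otimes^{*}_A A\restrict_{F_j}$ is the quotient of $A\restrict_{F_j}$ by $(J_i+J_j)/J_j$, i.e.\ $A\restrict_{F_{ij}}$), we get $Y\otimes^{\h}_A A\restrict_{F_j}\cong Y\otimes^{\h}_{A\restrict_{F_i}} A\restrict_{F_{ij}}$.

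Next I would apply Theorem \ref{thm:Haagerup}(b) again, this time over the $C^*$-algebra $A\restrict_{F_i}$: here $Y$ is a genuine Hilbert $A\restrict_{F_i}$-module, and $A\restrict_{F_{ij}}$ is a Hilbert module over itself equipped with the nondegenerate homomorphism $A\restrict_{F_i}\to\Multiplier(A\restrict_{F_{ij}})$ coming from the quotient map. This gives a completely isometric isomorphism $Y\otimes^{\h}_{A\restrict_{F_i}} A\restrict_{F_{ij}}\cong Y\otimes^{*}_{A\restrict_{F_i}} A\restrict_{F_{ij}}$. Finally, the Hilbert-module tensor product $Y\otimes^{*}_{A\restrict_{F_i}} A\restrict_{F_{ij}}$ is unitarily isomorphic to $Y\restrict_{F_{ij}}=Y/(Y(J_i+J_j)\restrict_{F_i})$ by the restriction-as-tensor-product identity already recorded in Definition \ref{def:restriction} (applied with base algebra $A\restrict_{F_i}$ and ideal $(J_i+J_j)\restrict_{F_i}$). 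Chasing an elementary tensor $y\otimes a\restrict_{F_j}$ through the composite of all these maps shows it lands on $y\restrict_{F_{ij}}\cdot a\restrict_{F_{ij}}$, which identifies the composite isomorphism with the stated map $\nu^Y_{ij}$; since each constituent map is completely isometric and surjective, so is $\nu^Y_{ij}$, and it is visibly an $A$-module map.

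The main obstacle, such as it is, will be bookkeeping around associativity of the Haagerup tensor product and the precise identification of the various intermediate objects — in particular making sure that $A\restrict_{F_i}\otimes^{\h}_A A\restrict_{F_j}$ really is $A\restrict_{F_{ij}}$ with the expected module structure, and that the norm on each intermediate space is the one that makes the next step of Theorem \ref{thm:Haagerup} applicable. None of these steps is deep; the content is entirely in invoking Theorem \ref{thm:Haagerup}(a), (b) and the Cohen-Hewitt / quotient-module facts in the right order, and then checking the image of an elementary tensor to pin down the map. One could alternatively give a direct argument: the formula $y\otimes a\restrict_{F_j}\mapsto y\restrict_{F_{ij}}\cdot a\restrict_{F_{ij}}$ clearly defines a completely contractive surjection with dense range onto $Y\restrict_{F_{ij}}$ (density because $Y\restrict_{F_{ij}}$ is nondegenerate over $A\restrict_{F_j}$), and to see it is a complete isometry one uses Theorem \ref{thm:Haagerup}(c): writing $A\restrict_{F_j}$ as the quotient of $A$ by $J_j$ and $Y$ as an operator $A$-module, the map $\psi\otimes\id$ exhibits $Y\otimes^{\h}_A A\restrict_{F_j}$ as $(Y\otimes^{\h}_A A)/(Y\otimes^{\h}_A J_j)\cong Y/(YJ_j)$, and one then identifies $YJ_j$ inside $Y$ with $Y(J_i+J_j)\restrict_{F_i}$ via the $A\restrict_{F_i}$-module structure, recovering $Y\restrict_{F_{ij}}$. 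I would present whichever of these two routes turns out to be shorter, but expect the first (pure Theorem \ref{thm:Haagerup}(a)+(b) plus Definition \ref{def:restriction}) to be cleanest.
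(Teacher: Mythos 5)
Your second, ``direct'' route is essentially the paper's own proof: the paper factors $\nu^Y_{ij}$ as $Y\otimes^{\h}_A A\restrict_{F_j}\to Y\restrict_{F_{ij}}\otimes^{\h}_A A\restrict_{F_j}\to Y\restrict_{F_{ij}}\otimes^{\h}_A A\restrict_{F_{ij}}\to Y\restrict_{F_{ij}}$, using Theorem \ref{thm:Haagerup}(c) twice (the kernels vanish because the relevant ideals act by zero on the other factor) together with (a). Your variant quotients the right-hand factor instead, identifying $Y\otimes^{\h}_A A\restrict_{F_j}$ with $Y/(YJ_j)$ and then with $Y\restrict_{F_{ij}}$ via $YJ_j=Y\,(J_i+J_j)\restrict_{F_i}$ (the $A$-action factors through $A\restrict_{F_i}$, and Cohen--Hewitt identifies the image of $Y\otimes^{\h}_A J_j$ with $YJ_j$); this is the same toolbox, (c) plus (a), and is correct.

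Your preferred first route has one genuine flaw as written: Theorem \ref{thm:Haagerup}(b) does not apply to $A\restrict_{F_i}\otimes^{\h}_A A\restrict_{F_j}$, because in (b) the left-hand factor must be a Hilbert module over the algebra being tensored over, and $A\restrict_{F_i}$ is \emph{not} a Hilbert $A$-module (so the ``Hilbert-module tensor product'' $A\restrict_{F_i}\otimes^{*}_A A\restrict_{F_j}$ you invoke is not even defined). The claim $A\restrict_{F_i}\otimes^{\h}_A A\restrict_{F_j}\cong A\restrict_{F_{ij}}$ is true, but proving it requires part (c) --- it is exactly the special case $Y=A\restrict_{F_i}$ of the lemma itself, i.e.\ the direct argument again. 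Once that is supplied, the remaining steps of route 1 are fine: (a), associativity of $\otimes^{\h}$ over the two different algebras (standard, though not among the facts listed in Theorem \ref{thm:Haagerup}), the legitimate application of (b) over $A\restrict_{F_i}$, and Definition \ref{def:restriction}. But at that point route 1 adds only bookkeeping on top of the direct argument, so present the second route.
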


\begin{proof}
The $A$-linearity of $\nu^Y_{ij}$ is obvious. To see that this map is a completely isometric isomorphism, factor $\nu^Y_{ij}$ as the composition
\begin{equation}\label{eq:nu-proof}
Y\otimes^{\h}_A A\restrict_{F_{j}} \xrightarrow{\restrict_{F_{ij}}\otimes \id} Y\restrict_{F_{ij}}\otimes^{\h}_A A\restrict_{F_j} \xrightarrow{ \id \otimes \restrict_{F_{ij}}} Y\restrict_{F_{ij}}\otimes^{\h}_A A\restrict_{F_{ij}}  \xrightarrow{y\otimes a\mapsto ya} Y\restrict_{F_{ij}}.
\end{equation}
Since the restriction mapping $\restrict_{F_{ij}}:Y\to Y\restrict_{F_{ij}}$ is a quotient mapping with kernel $YJ_{j}\restrict_{F_i}$, part (c) of Theorem \ref{thm:Haagerup} implies that $\restrict_{F_{ij}}\otimes \id : Y\otimes^{\h}_A A\restrict_{F_j}\to Y\restrict_{F_{ij}}\otimes^{\h}_A A\restrict_{F_j}$ is a quotient mapping with kernel $YJ_{j}\restrict_{F_{i}}\otimes^{\h}_A A\restrict_{F_j}$. Since $J_j$ acts by zero on $A\restrict_{F_j}$, this kernel is zero, and so the first map in \eqref{eq:nu-proof} is a completely isometric isomorphism. A similar argument shows that the second map in \eqref{eq:nu-proof} is a completely isometric isomorphism. The fact that the third map in \eqref{eq:nu-proof} is a completely isometric isomorphism is part (a) of Theorem \ref{thm:Haagerup}.
\end{proof}

\begin{lemma}\label{lem:mu}
Let $Z=\bigoplus_{i\in I} Z_i$ be a Hilbert $B$-module. The maps
\[
\mu^Z_{ij}: Z\otimes^{\h}_A B \to Z_i\restrict_{F_{ij}},\qquad z\otimes b \mapsto z_i\restrict_{F_{ij}} \cdot b_j\restrict_{F_{ij}}
\]
separate the points of $Z\otimes^{\h}_A B$, as $(i,j)$ ranges over $I^2$. Similarly, the maps
\[
\mu^Z_{ijk}: Z\otimes^{\h}_A B\otimes^{\h}_A B \to Z_i\restrict_{F_{ijk}},\qquad z\otimes b\otimes b' \mapsto z_i \restrict_{F_{ijk}} \cdot b_j\restrict_{F_{ijk}} \cdot b'_k\restrict_{F_{ijk}}
\]
separate the points of $Z\otimes^{\h}_A B\otimes^{\h}_A B$, as $(i,j,k)$ ranges over $I^3$.
\end{lemma}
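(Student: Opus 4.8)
The plan is to reduce the statement to Lemma \ref{lem:nu} together with the exactness properties of the Haagerup tensor product from Theorem \ref{thm:Haagerup}(c). First I would treat the two-fold case. Using Theorem \ref{thm:Haagerup}(a) to identify $B\otimes^{\h}_A B$-type factors is not quite the right move here; instead, observe that $B = \bigoplus_{j\in I} A\restrict_{F_j}$ and that, although the Haagerup tensor product does not commute with the infinite direct sum, it does commute with \emph{finite} direct sums. So for each finite subset $S\subseteq I$ I would consider the finite direct sum $B_S \coloneqq \bigoplus_{j\in S} A\restrict_{F_j}$, which is a complemented $C^*$-subalgebra of $B$, and the corresponding corner $Z\otimes^{\h}_A B_S \cong \bigoplus_{j\in S} (Z\otimes^{\h}_A A\restrict_{F_j})$. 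By Lemma \ref{lem:nu} (applied with $Y = Z_i$, noting $Z\otimes^{\h}_A A\restrict_{F_j} \cong \bigoplus_i Z_i \otimes^{\h}_A A\restrict_{F_j}$ — here I must be careful, since again the outer sum over $i$ need not commute with $\otimes^{\h}$) each summand $Z\otimes^{\h}_A A\restrict_{F_j}$ is detected by the maps $\mu^Z_{ij}$ as $i$ ranges over $I$.

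More carefully, I would argue as follows. Let $w\in Z\otimes^{\h}_A B$ be a nonzero element. Since $B$ is the $C_0$-direct sum and the algebraic tensor product $Z\otimes^{\alg}_A B$ is dense, and since right multiplication by the increasing net of projections $1\restrict_{F_S} \coloneqq \sum_{j\in S}1\restrict_{F_j}$ (for finite $S$) converges strictly to $1$ in $\Multiplier(B)$ and hence converges in norm on $Z\otimes^{\h}_A B$ by nondegeneracy, there is a finite $S$ with $w\cdot 1\restrict_{F_S}\neq 0$. Then $w\cdot 1\restrict_{F_S}$ lies in $Z\otimes^{\h}_A B_S$, and the projection $Z\otimes^{\h}_A B \to Z\otimes^{\h}_A B_S$ (induced by the completely isometric inclusion $B_S\into B$ and the complementary projection, using Theorem \ref{thm:Haagerup}(c)) decomposes $Z\otimes^{\h}_A B_S = \bigoplus_{j\in S} Z\otimes^{\h}_A A\restrict_{F_j}$. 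Pick $j\in S$ with the $j$-component of $w\cdot 1\restrict_{F_S}$ nonzero; this component is an element of $Z\otimes^{\h}_A A\restrict_{F_j}$, and under the completely isometric isomorphism of Lemma \ref{lem:nu} it is carried to a nonzero element of $\bigoplus_{i\in I} Z_i\restrict_{F_{ij}}$ (again after replacing $Z$ by a finite sub-sum in the $i$-variable via the same argument), so some $i$-component is nonzero. Tracing through the isomorphisms, that $i$-component is precisely $\mu^Z_{ij}(w)$, which is therefore nonzero. This proves the first assertion.

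For the three-fold case I would iterate: factor $Z\otimes^{\h}_A B\otimes^{\h}_A B = (Z\otimes^{\h}_A B)\otimes^{\h}_A B$, apply the two-fold argument to the outer factor of $B$ to reduce to showing that the maps $(z\otimes b)\otimes b' \mapsto (z\otimes b)_j\restrict_{F_{jk}}\cdot b'_k\restrict_{F_{jk}}$ — valued in $(Z\otimes^{\h}_A B)_j\restrict_{F_{jk}}$ — separate points, and then apply the two-fold argument again, this time to the factor $Z\otimes^{\h}_A B$ sitting inside (restricted over $F_{jk}$, using Lemma \ref{lem:nu} to push the restriction through the tensor product), to resolve the $i$-index. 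The compatibility of the restriction maps with the tensor factorisation — i.e.\ that $(Z\otimes^{\h}_A B)_j\restrict_{F_{jk}}\restrict_{F_{ijk}}$ is detected by $z\otimes b\mapsto z_i\restrict_{F_{ijk}}\cdot b_j\restrict_{F_{ijk}}$ — follows by another application of Lemma \ref{lem:nu} and Theorem \ref{thm:Haagerup}(c).

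The main obstacle I anticipate is bookkeeping the non-commutation of $\otimes^{\h}$ with the infinite direct sums in \emph{both} tensor variables simultaneously, and making the reduction-to-finite-support argument rigorous in the Haagerup setting: specifically, verifying that right multiplication by $1\restrict_{F_S}$ converges to the identity on $Z\otimes^{\h}_A B$ in norm (which should follow from nondegeneracy of the operator module structure together with the module-map property of the approximate unit, but needs to be stated cleanly), and that the resulting finite-support projections are compatible with the maps $\mu^Z_{ij}$ and $\mu^Z_{ijk}$. Once that is in hand, everything else is a diagram chase through Lemmas \ref{lem:nu} and the exactness in Theorem \ref{thm:Haagerup}(c).
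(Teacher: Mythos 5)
Your proposal is correct and follows essentially the same route as the paper: reduce via Lemma \ref{lem:nu} to separating points with the maps $\pi^Z_i\otimes\pi^B_j$ (resp.\ $\pi^Z_i\otimes\pi^B_j\otimes\pi^B_k$), and handle the failure of $\otimes^{\h}$ to commute with infinite direct sums by a finite-support approximation in one tensor factor at a time. The paper phrases this as a direct $\epsilon$-estimate against a finitely supported approximant (doing the $Z$-factor first) rather than your contrapositive cut-down-by-$1\restrict_{F_S}$ argument, but the content is the same.
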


\begin{proof}
Lemma \ref{lem:nu} implies that the maps 
\[
Z_i\otimes^{\h}_A A\restrict_{F_j} \to Z_i\restrict_{F_{ij}},\qquad z_i \otimes b_j \mapsto z_i \restrict_{F_{ij}}\cdot b_j\restrict_{F_{ij}}
\]
are completely isometric isomorphisms, and  so to show that the maps $\mu^Z_{ij}$ separate points it will suffice to show that the maps $\pi^Z_i\otimes \pi^B_j:Z\otimes^{\h}_A B\to Z_i\otimes^{\h}_A A\restrict_{F_j}$ separate points. 

To do this, we will first show that the maps
$
\pi^Z_i\otimes \id_B : Z\otimes^{\h}_A B\to Z_i\otimes^{\h}_A B
$
separate points, as $i$ ranges over $I$. Suppose that $t\in Z\otimes^{\h}_A B$ has $(\pi^Z_i\otimes \id_B)(t)=0$ for every $i\in I$. Fix $\smallepsilon>0$ and find a finite subset $S\subseteq I$ and an element $s\in (\bigoplus_{i\in S} Z)\otimes^{\h}_A B$ with $\|t - s\|<\epsilon$. Let $\pi^Z_{I\setminus S}:Z\to \bigoplus_{i\in I\setminus S} Z_i\subseteq Z$ be the projection. We have
\[
\begin{aligned}
\|t\| & =\left \| \left(\pi^Z_{I\setminus S} + \sum_{i\in S} \pi^Z_i\right)\otimes \id_B (t) \right\| \\
& \leq \left\| \pi^Z_{I\setminus S}\otimes \id_B(t-s) \right\| + \left\| \pi^Z_{I\setminus S}\otimes \id_B(s) \right\| + \sum_{i\in S} \left\| \pi^Z_i\otimes \id_B(t) \right\| \\
&<\smallepsilon + 0 + 0
\end{aligned}
\]
and so $t=0$. 

One shows similarly that for each  $i\in I$ the maps 
$
\id_{Z_i}\otimes \pi^B_j: Z_i\otimes^{\h}_A B\to Z_i\otimes^{\h}_A A\restrict_{F_j}
$
separate points as $j$ ranges over $I$, and it follows that the composite maps $\pi^Z_i\otimes \pi^B_j = (\id\otimes \pi^B_j)\circ (\pi^Z_i\otimes \id)$ separate points as $(i,j)$ ranges over $I$.

This shows that the $\mu^Z_{ij}$s separate points, and an analogous argument applies to the $\mu^Z_{ijk}$s.
\end{proof}

\begin{definition}\label{def:phi} 
For each Hilbert $B$-module $Z$, and for each $(i,j)\in I^2$, we consider the completely isometric $A\restrict_i$-linear map $\phi^Z_{ij}:Z_i\restrict_{ij}\into Z\otimes^{\h}_A B$ defined as the composition
\[
\phi^Z_{ij}:Z_i\restrict_{F_{ij}} \xrightarrow{\left(\nu^{Z_i}_{ij}\right)^{-1}} Z_i\otimes^{\h}_A A\restrict_{F_j} \into Z\otimes^{\h}_A B
\]
where the second arrow is the tensor product of the completely isometric embeddings $Z_i\into Z$ and $A\restrict_{F_j}\into B$. Explicitly,
\[
\phi^Z_{ij} (z_i\restrict_{F_{ij}}) = z_i\otimes 1\restrict_{F_j} \in Z_i\otimes^{\h}_A A\restrict_{F_j} \subset Z\otimes^{\h}_A B
\]
where $1\restrict_{F_j}\in \Multiplier(B)$ is as explained in Definition \ref{def:B}. Note that the tensor $z_i\otimes 1\restrict_{F_j}$ lies a priori in $Z_i\otimes^{\h}_A \Multiplier(B)$, but the nondegeneracy of $Z_i$ as an $A$-module ensures that this tensor in fact lies in the submodule $Z_i\otimes^{\h}_A B$, as we can write $z_i=z_i'a$ for some $z_i'\in Z_i$ and $a\in A$ and then identify $z_i\otimes 1\restrict_{F_j} = z_i'\otimes a 1\restrict_{F_j}\in Z_i\otimes^{\h}_A A\restrict_{F_j}$.
\end{definition}

For the last and most intricate of our technical lemmas, recall  that we identify collections $(Z_i)_{i\in I}$ of Hilbert $A\restrict_{F_i}$-modules with Hilbert $B$-modules, via the direct-sum construction (Definition \ref{def:Zi}). Thus an object $(Z,\zeta)$ of the category $\Glue(A,\mathfrak F)$ can be regarded as a Hilbert $B$-module $Z=\bigoplus_{i\in I} Z_i$, together with unitary isomorphisms $\zeta_{ij}:Z_j\restrict_{F_{ij}}\to Z_i\restrict_{F_{ij}}$ satisfying the associativity condition; and morphisms in $\Glue(A,\mathfrak F)$ can be regarded as adjointable maps of Hilbert $B$-modules compatible with the $\zeta_{ij}$s.

\begin{lemma}\label{lem:delta}
Let $(Z,\zeta)$ be an object of $\Glue(A,\mathfrak F)$. For each $z\in Z=\bigoplus_{i\in I}Z_i$ the sum
\[
\delta(z)\coloneqq \sum_{j\in I} \sum_{i\in I}  \phi^Z_{ij}\circ \zeta_{ij}( z_j\restrict_{ij})
\]
converges in $Z\otimes^{\h}_A B$. The map $\delta:Z\to Z\otimes^{\h}_A B$ is a $B$-linear complete isometry.
\end{lemma}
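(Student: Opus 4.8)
The plan is to analyze the double sum defining $\delta(z)$ one variable at a time, using the structure of $Z\otimes^{\h}_A B$ and the point-separating maps from Lemma \ref{lem:mu}. First I would check convergence. For fixed $z\in Z$, write $z=z'b$ with $z'\in Z$, $b\in B$ (nondegeneracy), so that $z_j\restrict_{ij}$ and hence $\phi^Z_{ij}\circ\zeta_{ij}(z_j\restrict_{ij})$ are controlled in norm by $\|z'\|\,\|b_j\|$, which tends to $0$ as $j\to\infty$ since $b\in B=\bigoplus A\restrict_{F_i}$. For the inner sum over $i$: the term $\phi^Z_{ij}\circ\zeta_{ij}(z_j\restrict_{ij})$ equals $\zeta_{ij}(z_j\restrict_{ij})\otimes 1\restrict_{F_j}$ sitting in $Z_i\otimes^{\h}_A A\restrict_{F_j}\subseteq Z\otimes^{\h}_A B$, and the $Z_i$ are orthogonal summands of $Z$; combined with local finiteness of $\mathfrak F$ this should give that only finitely many $i$ contribute for each relevant $j$ (up to $\epsilon$), or at least that the partial sums form a Cauchy net. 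The cleanest route is probably to first establish convergence of the inner sum $\sigma_j(z)\coloneqq\sum_{i\in I}\phi^Z_{ij}\circ\zeta_{ij}(z_j\restrict_{ij})$ in $Z\otimes^{\h}_A A\restrict_{F_j}$ using local finiteness near the support of $z_j$, then sum the $\sigma_j(z)$ over $j$ using the $C_0$-decay from $b$.

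Next, $B$-linearity: this is straightforward once convergence is in hand, since each $\phi^Z_{ij}\circ\zeta_{ij}$ is $A$-linear, the outer index $j$ is where the $B$-action lands (via $1\restrict_{F_j}$ absorbing $b_j$), and one checks $\delta(zb)=\delta(z)b$ term by term — the key point being that $b$ acts on the $j$-th block of $Z\otimes^{\h}_A A\restrict_{F_j}$ through $b_j$, matching $z_jb_j\restrict_{ij}=(z_j\restrict_{ij})(b_j\restrict_{ij})$ and the $A\restrict_{F_j}$-linearity of $\zeta_{ij}$.

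The main work is showing $\delta$ is a complete isometry. I would compute $\mu^Z_{k\ell}\circ\delta$ for each $(k,\ell)\in I^2$, where $\mu^Z_{k\ell}:Z\otimes^{\h}_A B\to Z_k\restrict_{F_{k\ell}}$ is the point-separating map of Lemma \ref{lem:mu}. On the term $\phi^Z_{ij}\circ\zeta_{ij}(z_j\restrict_{ij})=\zeta_{ij}(z_j\restrict_{ij})\otimes 1\restrict_{F_j}$, the map $\mu^Z_{k\ell}$ sends $w\otimes b\mapsto w_k\restrict_{k\ell}\cdot b_\ell\restrict_{k\ell}$, so it vanishes unless $i=k$ and $j=\ell$ (because $\zeta_{ij}(z_j\restrict_{ij})\in Z_i$ so its $k$-component is zero for $i\neq k$, and $(1\restrict_{F_j})_\ell=0$ for $j\neq\ell$), and on the surviving term it gives $\zeta_{k\ell}(z_\ell\restrict_{k\ell})\restrict_{k\ell}=\zeta_{k\ell}(z_\ell\restrict_{k\ell})$. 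Thus $\mu^Z_{k\ell}(\delta(z))=\zeta_{k\ell}(z_\ell\restrict_{k\ell})$. Now I claim this identity, together with its matrix-amplified version, forces $\delta$ to be completely isometric: since the $\mu^Z_{k\ell}$ jointly separate points, $\delta$ is injective; and to get the isometry I would instead produce an explicit left inverse, or better, recognize $\mu^Z_{k\ell}\circ\delta=\zeta_{k\ell}\circ(\pi^Z_\ell\restrict_{k\ell})$ and assemble the $\mu^Z_{k\ell}$ into a single completely isometric embedding of $Z\otimes^{\h}_A B$ into $\bigoplus_{k}\bigl(\bigoplus_\ell Z_k\restrict_{F_{k\ell}}\bigr)$ — using Lemma \ref{lem:nu} to identify $Z\otimes^{\h}_A B\cong\bigoplus_k(Z_k\otimes^{\h}_A B)$ and then $Z_k\otimes^{\h}_A B$ as a subspace built from the $Z_k\restrict_{F_{k\ell}}$ — under which $\delta(z)$ corresponds to the tuple $(\zeta_{k\ell}(z_\ell\restrict_{k\ell}))_{k,\ell}$, and this tuple has the same norm as $z$ because each $\zeta_{k\ell}$ is unitary and the cocycle/associativity condition makes the assignment $z\mapsto(\zeta_{k\ell}(z_\ell\restrict_{k\ell}))$ isometric onto its range.

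\textbf{Main obstacle.} The delicate point is the last one: turning the pointwise formulas $\mu^Z_{k\ell}\circ\delta=\zeta_{k\ell}\circ(\pi^Z_\ell\restrict_{k\ell})$ into a genuine (complete) isometry statement, since the $\mu^Z_{k\ell}$ only separate points and are not jointly isometric, and $\otimes^{\h}$ does not commute with the direct sums involved. I expect to need a careful identification of $Z\otimes^{\h}_A B$ as (a completion related to) $\bigoplus_i Z_i\otimes^{\h}_A B$ via Lemma \ref{lem:nu}, tracking norms through the Haagerup-tensor exact sequences of Theorem \ref{thm:Haagerup}(c), and exploiting that the image of $\delta$ lands in the ``diagonal-type'' subspace cut out by the cocycle relations, where the norm is computable. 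Getting the matrix amplifications right throughout — so that the conclusion is \emph{complete} isometry, not merely isometry — will require running every step at the level of $M_n$ over the modules, which the cited properties of $\otimes^{\h}$ support but which must be stated with care.
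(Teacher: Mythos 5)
Your computation of the ``coordinates'' $\mu^Z_{k\ell}\circ\delta(z)=\zeta_{k\ell}(z_\ell\restrict_{F_{k\ell}})$ is correct, but both halves of the lemma are left with genuine gaps. For convergence: decay of the term norms ($\|z_j\restrict_{F_{ij}}\|\to 0$ as $i\to\infty$, and $\|z_j\|\to 0$ as $j\to\infty$) is not enough to make the series converge, and local finiteness of $\mathfrak F$ does \emph{not} imply that each $F_j$ meets only finitely many $F_i$ (take $F_0=\Prim A$ together with any locally finite family), so your ``only finitely many $i$ contribute'' step fails. What actually makes the partial sums Cauchy is a $C^*$-identity: one needs a completely isometric embedding of $Z\otimes^{\h}_A B$ into a $C^*$-algebra in which the adjoint-times-itself of a partial sum over $i\in S$ can be computed and recognised as (the image of) $\sum_{i\in S}a\restrict_{F_i}$ for a fixed $a\in A$, a sum which converges in $B=\bigoplus_i A\restrict_{F_i}$ because the norm of a tail is a \emph{supremum}, not a sum, of the tail norms. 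The paper does this by embedding $Z$ and $B$ into the linking algebra $L=\Compact_{\CMod(B)}(Z\oplus B)$ and then embedding $Z\otimes^{\h}_A B$ completely isometrically into the amalgamated free product $L\ast_A L$ via $z\otimes b\mapsto \lambda^Z_1(z)\lambda^B_2(b)$ (Theorem \ref{thm:Haagerup}(d)); orthogonality of the $Z_i$ and the cocycle identity for $\zeta$ then give $\Delta_{j,S}(z)^*\Delta_{k,S}(z')=\lambda^B_2(1\restrict_{F_j})\lambda^B_1\bigl(\sum_{i\in S}a_{jk}\restrict_{F_i}\bigr)\lambda^B_2(1\restrict_{F_k})$, from which convergence follows.

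For the (complete) isometry, the route you propose -- assembling the $\mu^Z_{k\ell}$ into a completely isometric embedding of $Z\otimes^{\h}_A B$ into $\bigoplus_k\bigl(\bigoplus_\ell Z_k\restrict_{F_{k\ell}}\bigr)$ -- is exactly what is not available: Lemma \ref{lem:mu} only gives point-separation, and an isometric statement of the kind you want would amount to the Haagerup tensor product commuting with $C_0$-direct sums, which is false in general (indeed $Z\otimes^{\h}_A B$ is not even a Hilbert $B$-module here, since $Z$ is not a Hilbert $A$-module, so its norm is not computable from an inner product on blocks). Point-separation gives injectivity of $\delta$ at best, never a norm identity. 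The paper sidesteps this by working inside $L\ast_A L$: the same free-product computation that yields convergence gives $\Delta(z)^*\Delta(z')=\lambda^B_2(\langle z\,|\,z'\rangle_B)$, so the image $\Delta(Z)$ is a Hilbert $B$-module with $\lambda^B_2(B)$-valued inner product and $\Delta$ is a unitary, hence completely isometric, $B$-module isomorphism onto it; composing with the complete isometry $\Lambda^{-1}$ gives the claim for $\delta$. Without some such device for computing the Haagerup norm of $\delta(z)$ -- which your proposal acknowledges as the main obstacle but does not supply -- the proof is incomplete.
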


\begin{proof}
Let $L=\Compact_{\CMod(B)}(Z\oplus B)$ be the \emph{linking $C^*$-algebra} of the Hilbert $B$-module $Z$. See, e.g., \cite[Corollary 3.21]{RW} for details; for now it will suffice to note that $L$ is a $C^*$-algebra equipped with a completely isometric embedding $\lambda^Z:Z\to L$ and an injective $*$-homomorphism $\lambda^B:B\to L$ satisfying the relations
\begin{equation}\label{eq:lambda-relations}
\lambda^Z(zb)=\lambda^Z(z)\lambda^B(b)\quad \text{and}\quad \lambda^Z(z)^*\lambda^Z(z')=\lambda^B(\langle z|z'\rangle)
\end{equation}
for all $z,z'\in Z$ and all $b\in B$. We then embed $Z\otimes^{\h}_A B$ into the amalgamated free product $C^*$-algebra $L\ast_A L$ using the completely isometric map
\[
\Lambda: Z\otimes^{\h}_A B \xrightarrow{\lambda^Z\otimes \lambda^B} L\otimes^{\h}_A L \into L\ast_A L, \qquad \Lambda( z\otimes b) \coloneqq \lambda^Z_1(z) \lambda^B_2(b).
\]
Here the subscripts $1$ and $2$ are used to distinguish between the elements of the two embedded copies of $L$ in $L\ast_A L$. 

We shall now establish several identities related to the embedding $\Lambda$.
First we remark that
\begin{equation}\label{eq:lambda-1-2}
\lambda^B_1 ( a\restrict_{F_i} ) \lambda_2^B( 1\restrict_{F_j} ) = \lambda^B_1(1\restrict_{F_i}) \lambda^B_2(a\restrict_{F_j})
\end{equation}
for all $a\in A$. This follows from the identities $a\restrict_{F_i}=1\restrict_{F_i}\cdot a$ and $a\restrict_{F_j}=a\cdot 1\restrict_{F_j}$ in the $C^*$-algebra $B$, and from the fact that the homomorphisms $\lambda^B_1,\lambda^B_2:B\to L\ast_A L$ agree on the subalgebra $A$. 

We also have
\begin{equation}\label{eq:Lambda-phi}
\Lambda \circ \phi^Z_{ij}( z_i\restrict_{F_{ij}}) = \lambda^Z_1(z_i)\lambda^B_2(1\restrict_{F_j})
\end{equation}
for all $z_i\in Z_i$. This follows immediately upon writing $\phi^Z_{ij}(z_i\restrict_{F_{ij}})=z_i\otimes 1\restrict_{F_j}$ as in Definition \ref{def:phi}.

Using \eqref{eq:Lambda-phi}, we find that for all $i,j,k,l\in I$, all $z_i\in Z_i$ and all $z_l\in Z_l$, we have
\begin{equation}\label{eq:Lambda-star-Lambda}
\begin{aligned}
 \Lambda\left( \phi^Z_{ij}(z_i\restrict_{F_{ij}}) \right)^* \Lambda\left( \phi^Z_{lk}(z_l\restrict_{F_{lk}}) \right)  & = \lambda^B_2(1\restrict_{F_j}) \lambda^Z_1(z_i)^* \lambda^Z_1(z_l) \lambda^B_2(1\restrict_{F_k})  \\
& = \lambda^B_2(1\restrict_{F_j})\lambda^B_1( \langle z_i \, | \, z_l \rangle_B )\lambda^B_2(1\restrict_{F_k}).
\end{aligned}
\end{equation}
If $l\neq i$ then the summands $Z_i$ and $Z_l$ are orthogonal in $Z$; hence \eqref{eq:Lambda-star-Lambda} yields
\begin{equation}\label{eq:Lambda-orthogonal}
\left( \image(\Lambda\circ \phi^Z_{ij})\right)^* \left( \image(\Lambda\circ \phi^Z_{lk})\right) =0 \quad \text{if}\quad l\neq i.
\end{equation}

The final identity related to $\Lambda$ that we shall need concerns the case of $l=i$ in \eqref{eq:Lambda-star-Lambda}. Fix $j,k\in I$, $z_j\in Z_j$, and $z_k\in Z_k$. Let $a\in A$ be any element satisfying
\begin{equation}\label{eq:ajk}
a\restrict_{F_{jk}} = \left\langle \left. z_j\restrict_{F_{jk}} \, \right|\, \zeta_{jk}( z_k\restrict_{F_{jk}}) \right\rangle_{A\restrict_{F_{jk}}}.
\end{equation}
We claim that for all $i\in I$ we have
\begin{equation}\label{eq:Lambda-star-Lambda-lift}
\left( \Lambda\circ \phi^Z_{ij}\circ \zeta_{ij} (z_j\restrict_{F_{ij}})\right)^* \left(\Lambda\circ \phi_{ik}\circ \zeta_{ik}(z_k\restrict_{F_{ik}}) \right) = \lambda^B_2(1\restrict_{F_j})\lambda^B_1(a\restrict_{F_i})\lambda^B_2(1\restrict_{F_k}).
\end{equation}
To prove \eqref{eq:Lambda-star-Lambda-lift} choose elements $z_i,z_i'\in Z_i$ satisfying $\zeta_{ij}(z_j\restrict_{F_{ij}})=z_i\restrict_{F_{ij}}$ and $\zeta_{ik}(z_k\restrict_{F_{ik}})=z'_i\restrict_{F_{ik}}$. Then \eqref{eq:Lambda-star-Lambda}, with $l=i$, gives
\[
\left( \Lambda\circ \phi^Z_{ij}\circ \zeta_{ij} (z_j\restrict_{F_{ij}})\right)^* \left(\Lambda\circ \phi_{ik}\circ \zeta_{ik}(z_k\restrict_{F_{ik}}) \right) = \lambda_2^B(1\restrict_{F_j})\lambda^B_1(\langle z_i\, |\, z'_i\rangle_{A\restrict_{F_i}}) \lambda^B_2(1\restrict_{F_k}).
\]
The identity \eqref{eq:lambda-1-2} shows $\lambda_2^B(1\restrict_{F_j}) \lambda_1^B(a\restrict_{F_i}) \lambda_2^B(1\restrict_{F_k})$ depends only on $a\restrict_{F_{ijk}}$. So to  prove \eqref{eq:Lambda-star-Lambda-lift} it will suffice to show that
$
(\langle z_i \, | \, z'_i \rangle_{A\restrict_{F_i}} )\restrict_{F_{ijk}} = a\restrict_{F_{ijk}}
$,
which we do  as follows:
\[
\begin{aligned}
  \left( \langle z_i \,| \, z_i'\rangle_{A\restrict_{F_i}} \right)\restrict_{F_{ijk}} & = \left\langle \left. z_i\restrict_{F_{ijk}}\,  \right| \, z'_i\restrict_{F_{ijk}} \right\rangle_{A\restrict_{F_{ijk}}}  \\
& =  \left\langle \left. \left(\zeta_{ij}(z_j\restrict_{F_{ij}})\right)\restrict_{F_{ijk}}\,  \right| \, \left(\zeta_{ik}(z_k\restrict_{F_{ik}})\right)\restrict_{F_{ijk}}\right\rangle_{A\restrict_{F_{ijk}}} \\
& = \left\langle \left. \zeta_{ij}\restrict_{F_{ijk}}(z_j\restrict_{F_{ijk}})\,  \right| \, \zeta_{ij}\restrict_{F_{ijk}}\circ \zeta_{jk}\restrict_{F_{ijk}}(z_k\restrict_{F_{ijk}}) \right\rangle_{A\restrict_{F_{ijk}}} \\
&  = \left\langle \left. z_j\restrict_{F_{ijk}}\, \right| \, \zeta_{jk}\restrict_{F_{ijk}}(z_k\restrict_{F_{ijk}}) \right\rangle_{A\restrict_{F_{ijk}}} \\
& =\left( \left\langle \left. z_j\restrict_{F_{jk}}\, \right| \, \zeta_{jk}(z_k\restrict_{F_{jk}}) \right\rangle_{A\restrict_{F_{jk}}}\right) \restrict_{F_{ijk}} \\
& = a\restrict_{F_{ijk}}.
\end{aligned}
\]
Here the second equality comes from the definition of $z_i$ and $z'_i$; the third equality comes from the associativity property of the maps $\zeta$; the fourth equality follows from the unitarity of $\zeta_{ij}$; and the final equality holds by the definition of $a$.

Having established \eqref{eq:Lambda-star-Lambda-lift} we are now ready to address the convergence of $\delta(z)$. Since the embedding $\Lambda:Z\otimes^{\h}_A B\into L\ast_A L$ is a complete isometry, it will suffice to consider the composition
\[
\Delta \coloneqq \Lambda\circ \delta : z\mapsto \sum_{j\in I} \sum_{i\in I} \Lambda\circ \phi^Z_{ij}\circ \zeta_{ij}( z_j\restrict_{F_{ij}}).
\]

We first consider the inner sums. For each $z\in Z$, each $j\in I$, and each finite subset $S\subset I$, we let $\Delta_{j,S}(z)\in L\ast_A L$ be the partial sum
\[
\Delta_{j,S}(z) \coloneqq \sum_{i\in S} \Lambda\circ \phi^Z_{ij}\circ \zeta_{ij}(z_j\restrict_{F_{ij}}).
\]
For all $z,z'\in Z$ and all $(j,k)\in I^2$ we have
\[
\begin{aligned}
\Delta_{j,S}(z)^* \Delta_{k,S} (z') & = \sum_{(i,l)\in S^2} \left( \Lambda\circ \phi^Z_{ij}\circ \zeta_{ij} (z_j\restrict_{F_{ij}})\right)^* \left(\Lambda\circ \phi^Z_{lk}\circ \zeta_{lk}(z'_k\restrict_{F_{lk}}) \right) \\
& = \sum_{i\in S} \left( \Lambda\circ \phi^Z_{ij}\circ \zeta_{ij} (z_j\restrict_{F_{ij}})\right)^* \left(\Lambda\circ \phi^Z_{ik}\circ \zeta_{ik}(z'_k\restrict_{F_{ik}}) \right) 
\end{aligned}
\]
since \eqref{eq:Lambda-orthogonal} ensures that the terms with $l\neq i$ are zero. Letting $a_{jk}\in A$ be a lift of $\langle z_j\restrict_{F_{jk}}\, | \, \zeta_{jk}(z'_k\restrict_{F_{jk}}) \rangle_{A\restrict_{F_{jk}}}$, an application of \eqref{eq:Lambda-star-Lambda-lift}  gives
\[
\begin{aligned}
\Delta_{j,S}(z)^* \Delta_{k,S} (z')  & = \sum_{i\in S} \lambda^B_2(1\restrict_{F_j})\lambda^B_1(a_{jk}\restrict_{F_i}) \lambda^B_2(1\restrict_{F_j}) \\ 
& = \lambda^B_2(1\restrict_{F_j}) \lambda^B_1\left( \sum_{i\in S} a_{jk}\restrict_{F_i}\right) \lambda^B_2(1\restrict_{F_j}).
\end{aligned}
\]
The net $S\mapsto  \sum_{i\in S} a_{jk}\restrict_{F_i}$ converges in $B$ to the element $a_{jk}\in A$, and so the net $S\mapsto \Delta_{j,S}(z)^*\Delta_{k,S}(z')$ converges in $L\ast_A L$ to the element 
\[
\begin{aligned}
\lambda^B_2(1\restrict_{F_j})\lambda^B_1(a_{jk})\lambda^B_2(1\restrict_{F_k}) & = \lambda^B_2(1\restrict_{F_j})\lambda^B_2(a_{jk})\lambda^B_2(1\restrict_{F_k}) \\
& = \begin{cases} \lambda^B_2( a_{jj}\restrict_{F_j}) = \lambda^B_2(\langle z_j \, |\, z'_j\rangle_{A\restrict_{F_j}}) & \text{if $j=k$} \\
0 & \text{if $j\neq k$}\end{cases}
\end{aligned}
\]
because if $j\neq k$ the central idempotents $1\restrict_{F_j}, 1\restrict_{F_k}\in \Multiplier(B)$ are orthogonal. 

Putting $k=j$ and $z'=z$ shows that the sum 
\[
\Delta_j(z)\coloneqq \sum_{i\in I} \Lambda\circ \phi_{ij}\circ \zeta_{ij}(z_j\restrict_{ij})
\]
converges in $L\ast_A L$. Moreover, we have
\begin{equation}\label{eq:Deltaj-star-Deltaj}
\Delta_j(z)^*\Delta_k(z')=\begin{cases} \lambda^B_2(\langle z_j\, |\, z'_j\rangle_{A\restrict_{F_j}}) & \text{if $j=k$} \\ 0& \text{if $j\neq k$.}\end{cases}
\end{equation}
for all $(j,k)\in I^2$ and all $z,z'\in Z$.

Now for each finite subset $S\subseteq I$ and for all $z,z'\in Z$, \eqref{eq:Deltaj-star-Deltaj} implies that
\[
\left( \sum_{j\in S}\Delta_j(z)\right)^* \left( \sum_{k\in S}\Delta_k(z)\right) = \lambda^B_2\left( \sum_{j\in S} \langle z_j\, |\, z'_j\rangle_{A\restrict_{F_j}}\right).
\]
Since the net $S\mapsto \sum_{j\in S} \langle z_j\, |\, z'_j\rangle_{A\restrict_{F_j}}$ converges in $B$ to $\langle z\, |\, z'\rangle_B$, we finally conclude that the sum $\Delta(z)=\sum_{j\in I}\Delta_j(z)$ converges in $L\ast_A L$, and satisfies
\begin{equation}\label{eq:Delta-star-Delta}
\Delta(z)^* \Delta(z')  = \lambda^B_2\left(\langle z\, |\, z'\rangle_B \right)
\end{equation}
for all $z,z'\in Z$.

Now that we know that $\delta$ is well-defined, it is easy to see that it is $B$-linear, and we are thus left to show that $\delta$ is a complete isometry; since $\Lambda$ is a complete isometry, it will suffice to show that $\Delta=\Lambda\circ\delta$ is a complete isometry. To do this, note that the equality \eqref{eq:Delta-star-Delta} implies that the canonical $L\ast_A L$-valued inner product on the $C^*$-algebra $L\ast_A L$, restricted to the image $\Delta(Z)$, takes values in the subalgebra $\lambda^B_2(B)\cong B$. The same formula shows that if we use this $\lambda^B_2(B)$-valued inner product to consider $\Delta(Z)$ as a Hilbert $B$-module, then the map $\Delta:Z\to \Delta(Z)$ is an isometric $B$-linear isomorphism, hence a unitary isomorphism of Hilbert $B$-modules, and hence a complete isometry.
\end{proof}

\section{Gluing Hilbert modules}\label{sec:gluing}

\subsection{Gluing elements}
We continue to assume Hypotheses \ref{hyp}.
Let $X$ be a right Hilbert $A$-module. For each $i\in I$ we have a Hilbert $A\restrict_{F_i}$-module $X\restrict_{F_i}$, and the direct sum $\bigoplus_{i\in I} X\restrict_{F_i}\cong X\otimes^{\h}_A B$ is a Hilbert module over $B=\bigoplus_{i\in I} A\restrict_{F_i}$; see Lemma \ref{lem:X-tensor-B}. The quotient maps $X\onto X\restrict_{F_i}$ assemble into a map $X\to \bigoplus_{i\in I} X\restrict_{F_i}$, and our next goal is to compute the image of this map. To do this we shall use the following notation:

\begin{definition}\label{def:eta}
For each operator $A$-module $X$ we define a map $\eta^X:X\to X\otimes^{\h}_A B$ by $\eta^X(x)\coloneqq x\otimes 1$. Here the tensor $x\otimes 1$ ostensibly lies in $X\otimes^{\h}_A \Multiplier(B)$;  but it in fact lies in the submodule $X\otimes^{\h}_A B$, by the same reasoning as was applied to $z_i\otimes 1\restrict_{F_j}$ in Definition \ref{def:phi}.
\end{definition}

 The map $\eta^X$ fits into a commuting diagram
\begin{equation}\label{eq:eta-diag}
\xymatrix{ 
X \ar[rr]^-{\eta^X} & & X\otimes^{\h}_A B \\
& X\otimes^{\h}_A A\ar[ul]_-{\cong}^-{x\otimes a\mapsto xa} \ar[ur]_-{\id_X\otimes \eta}
}
\end{equation}
where $\eta:A\to B$ is the embedding of Lemma \ref{lem:A-in-B}. The two diagonal arrows in the above diagram are complete isometries, by  Theorem \ref{thm:Haagerup}  parts (a) and (c); hence $\eta^X$ is also a complete isometry.

\begin{lemma}\label{lem:image-eta}
For each operator $A$-module $X$ we have 
\[
\image \eta^X=\ker \left( \eta^X\otimes \id_B - \id_X\otimes \eta^B : X\otimes^{\h}_A B\to X\otimes^{\h}_A B\otimes^{\h}_A B\right).
\]
\end{lemma}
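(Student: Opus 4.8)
The goal is the equality $\image\eta^X=\ker(d)$, where $d\coloneqq\eta^X\otimes\id_B-\id_X\otimes\eta^B:X\otimes^{\h}_AB\to X\otimes^{\h}_AB\otimes^{\h}_AB$. The inclusion $\image\eta^X\subseteq\ker d$ is the easy direction: for $x\in X$ one has $\eta^X\otimes\id_B(\eta^X(x))=x\otimes1\otimes1=\id_X\otimes\eta^B(\eta^X(x))$, where all these tensors are interpreted via the multiplier-algebra trick already sanctioned in Definitions \ref{def:eta} and \ref{def:phi}; so $d\circ\eta^X=0$. (One should double-check that the two ways of reading $x\otimes1\otimes1$ — first tensoring $1\in\Multiplier(B)$ on, then $1\in\Multiplier(B)$ again — genuinely coincide after factoring $x=x'a$; this is routine but worth a sentence.)

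The substance is the reverse inclusion $\ker d\subseteq\image\eta^X$. The plan is to exploit the diagram \eqref{eq:eta-diag} together with the exactness of the Haagerup tensor product. Since $\eta:A\into B$ is a nondegenerate $*$-subalgebra by Lemma \ref{lem:A-in-B}, the quotient $B/A$ makes sense as an operator $A$-bimodule, and Theorem \ref{thm:Haagerup}(c) gives that $\eta^X=\id_X\otimes\eta$ (modulo the unitary $X\otimes^{\h}_AA\cong X$) is a complete isometry onto the kernel of the quotient map $q\coloneqq\id_X\otimes(\text{proj}):X\otimes^{\h}_AB\to X\otimes^{\h}_A(B/A)$. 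So it suffices to show $\ker d\subseteq\ker q$, i.e.\ that $q$ annihilates every element killed by $d$. The standard descent-theoretic device is to produce a \emph{retraction}: an $A$-bimodule map (or at least a completely bounded map) $r:B\to A$ with $r\circ\eta=\id_A$, or more precisely a map at the level of the relevant tensor products that splits off the "$A$-part". Here the contractive completely positive conditional-expectation-like map $\iota:B\to A^{\dual\dual}$ from Lemma \ref{lem:A-in-B}, with $\iota\circ\eta$ the canonical embedding $A\into A^{\dual\dual}$, is exactly the tool: tensoring with $\id_X$ (after passing to $A^{\dual\dual}$-coefficients, or working with the bidual module) gives a completely bounded left inverse $\rho:X\otimes^{\h}_AB\to X\otimes^{\h}_AA^{\dual\dual}$ to $\eta^X$ followed by the canonical inclusion. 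The computation then runs: given $t\in\ker d$, apply $\id_X\otimes\eta^B$ and $\eta^X\otimes\id_B$ — they agree — and then hit the result with $\id\otimes\rho$ in the last tensor slot to recover $t$ itself as lying in $\image(\eta^X\otimes\id)$ applied to something in $\image\rho$; unwinding, $t=\eta^X(\rho(t))$ with $\rho(t)\in X$ (not just $X^{\dual\dual}$), because $t\in X\otimes^{\h}_AB$ with honest $B$-coefficients and the relation $d(t)=0$ forces the bidual-valued element $\rho(t)$ back into $X$.

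The main obstacle is precisely this last point: the retraction $\iota$ lands in $A^{\dual\dual}$, not in $A$, because $\eta(A)\subseteq B$ is generally not complemented by a genuine conditional expectation. So the naive splitting only shows $t\in\eta^{X^{\dual\dual}}(X^{\dual\dual})$-ish, and one must argue that the element of $X^{\dual\dual}$ (or $X\otimes^{\h}_AA^{\dual\dual}$) so produced actually sits in $X$. The way to control this is to use the cosimplicial identity $d(t)=0$ honestly — i.e.\ $(\eta^X\otimes\id_B)(t)=(\id_X\otimes\eta^B)(t)$ in $X\otimes^{\h}_AB\otimes^{\h}_AB$, with \emph{all} coefficients in the honest algebra $B$, not its bidual — and only then apply $\rho$ in one slot; the output is forced to lie in $X\otimes^{\h}_AB$ (honest coefficients), while simultaneously being in the image of $\eta^X$ composed with the bidual inclusion, and the intersection of $X\otimes^{\h}_AB$ with $X^{\dual\dual}$ inside $X^{\dual\dual}\otimes^{\h}_{A^{\dual\dual}}B^{\dual\dual}$ (say) is just $X$. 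Making this intersection argument precise — choosing the right ambient space in which all the modules and their biduals embed compatibly, so that "honest coefficients $\cap$ image of $\eta$" literally equals $X$ — is the technical heart; I expect it to be handled, as the remark before the lemma indicates, by importing the corresponding statement and its proof from \cite{Crisp-descent} in abridged form, where the cobar/cosimplicial machinery makes the split-exactness transparent.
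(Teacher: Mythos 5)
Your first half is sound and agrees with the paper: the easy inclusion $\image\eta^X\subseteq\ker d$ (where $d\coloneqq\eta^X\otimes\id_B-\id_X\otimes\eta^B$), and the identification $\image\eta^X=\ker(\id_X\otimes q)$ for the quotient map $q:B\to B/A$, via Theorem \ref{thm:Haagerup}(c) and the diagram \eqref{eq:eta-diag}. The gap is in the reverse inclusion $\ker d\subseteq\ker(\id_X\otimes q)$. Your plan is to slice with the weak expectation $\iota:B\to A^{\dual\dual}$ and then force the resulting element, which a priori has bidual coefficients, back into $X$ by an ``intersection argument.'' That argument is exactly the missing content: slicing lands you in $X\otimes^{\h}_A A^{\dual\dual}$ (or some bidual-flavoured module), which is not $X$, and the compatibility you would need --- a common ambient space in which $X\otimes^{\h}_A B$ and the bidual objects embed so that ``honest coefficients $\cap$ image of the bidual map $=X$'' --- is neither formulated nor proved; you explicitly defer it to \cite{Crisp-descent}, which a blind proof of this lemma cannot do. Such statements about Haagerup tensor products of biduals are genuinely delicate (this is precisely why the paper, when it does use $\iota$, does so for Lemma \ref{lem:kernels} only, and there on \emph{dual} spaces, building a completely bounded pseudo-inverse of $\gamma^{\dual}$ rather than a slice map). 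As written, the hard direction is not established.

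The missing idea is that no expectation is needed here at all: the exactness package you already invoked suffices. For $t\in\ker d$ one computes
\[
(\id_X\otimes\eta^{B/A})\circ(\id_X\otimes q)(t)
=(\id_X\otimes q\otimes\id_B)\circ(\id_X\otimes\eta^B)(t)
=(\id_X\otimes q\otimes\id_B)\circ(\eta^X\otimes\id_B)(t)=0,
\]
where the first equality is the naturality relation $\eta^{B/A}\circ q=(q\otimes\id_B)\circ\eta^B$, the second uses $d(t)=0$, and the third holds because $(\id_X\otimes q)\circ\eta^X=0$, i.e.\ $\image\eta^X=\ker(\id_X\otimes q)$. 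Since $\eta^{B/A}$ is a complete isometry (the same argument as for $\eta^X$, applied to the operator $A$-module $B/A$), Theorem \ref{thm:Haagerup}(c) makes $\id_X\otimes\eta^{B/A}$ injective, so $(\id_X\otimes q)(t)=0$ and hence $t\in\image\eta^X$. This is the paper's route; your retraction-by-$\iota$ strategy, by contrast, would still need a substantial (and unproved) bidual intersection lemma to close.
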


This is an instance of \cite[Proposition 4.7(a)]{Crisp-descent}. In accordance with our aim to make this paper readable independently of the full machinery and notation of \cite{Crisp-descent} we shall briefly recall the proof.

\begin{proof}
The containment of the left-hand side in the right-hand side is easily verified.
For the reverse containment,  let $q:B\to B/A$ be the quotient mapping of operator $A$-modules. It is easy to see that $\eta^{B/A}\circ q = (q\otimes \id_B)\circ \eta^B$ as maps $B\to (B/A)\otimes^{\h}_A B$. Theorem \ref{thm:Haagerup} part (c), together with the commuting diagram \eqref{eq:eta-diag}, implies that $\image \eta^X=\ker(\id_X\otimes q)$. A short computation shows that if $t$ lies in $\ker(\eta^X\otimes \id_B - \id_X\otimes \eta^B)$ then we have
\[
(\id_X\otimes \eta^{B/A})\circ (\id_X\otimes q) (t) = (\id_X \otimes q\otimes \id_B)\circ (\eta^X\otimes \id_B) (t),
\]
and the right-hand side is zero because $\image \eta^X = \ker(\id_X\otimes q)$. Now $\eta^{B/A}$ is a complete isometry, so $\id_X\otimes \eta^{B/A}$ is likewise, and therefore the equation in the last display implies that $t\in \ker(\id_X\otimes q)=\image \eta^X$.
\end{proof}

\begin{lemma}\label{lem:gluing-elements}
Let $X$ be a Hilbert $A$-module. The formula
\[
\Phi^X: X\to \bigoplus_{i\in I} X\restrict_{F_i},\qquad \Phi^X(x)\coloneqq \left( x\restrict_{F_i}\right)_{i\in I}
\]
gives a well-defined $A$-linear complete isometry, with
\[
\image\Phi^X = \left\{ \left. (x_i)_{i\in I} \in \bigoplus_{i\in I} X\restrict_{F_i} \, \right| \, x_i\restrict_{F_{ij}} =  x_j\restrict_{F_{ij}}\text{ for all }(i,j)\in I^2 \right\}.
\]
\end{lemma}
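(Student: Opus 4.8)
The plan is to deduce this from Lemma~\ref{lem:image-eta} together with the identifications established in the technical section. First I would use Lemma~\ref{lem:X-tensor-B} to identify $X\otimes^{\h}_A B \cong X\otimes^{*}_A B \cong \bigoplus_{i\in I} X\restrict_{F_i}$ as Hilbert $B$-modules (the first isomorphism is Theorem~\ref{thm:Haagerup}(b), since $X\otimes^{\h}_A B$ and $X\otimes^{*}_A B$ agree), and check that under this identification the complete isometry $\eta^X:X\to X\otimes^{\h}_A B$ of Definition~\ref{def:eta} corresponds precisely to the map $\Phi^X(x)=(x\restrict_{F_i})_{i\in I}$. This is a routine unwinding: $\eta^X(x)=x\otimes 1$, and pushing $x\otimes 1$ through $\Psi^X$ gives $(x\restrict_{F_i}\cdot 1\restrict_{F_i})_{i\in I}=(x\restrict_{F_i})_{i\in I}$. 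Since $\eta^X$ is already known to be an $A$-linear complete isometry (from the commuting diagram \eqref{eq:eta-diag} and Theorem~\ref{thm:Haagerup}(a),(c)), this immediately gives the first assertion about $\Phi^X$.

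For the image, Lemma~\ref{lem:image-eta} says $\image\eta^X$ is the kernel of $\eta^X\otimes\id_B - \id_X\otimes\eta^B$ on $X\otimes^{\h}_A B\otimes^{\h}_A B$. So the task reduces to translating this kernel condition, under the appropriate identification of $X\otimes^{\h}_A B\otimes^{\h}_A B$, into the compatibility condition $x_i\restrict_{F_{ij}}=x_j\restrict_{F_{ij}}$. The key tool here is Lemma~\ref{lem:mu}: the maps $\mu^{Z}_{ijk}:Z\otimes^{\h}_A B\otimes^{\h}_A B\to Z_i\restrict_{F_{ijk}}$ separate points (applied with $Z=\bigoplus_i X\restrict_{F_i}$, whose $i$-th summand is $X\restrict_{F_i}$). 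So a tensor in $X\otimes^{\h}_A B\otimes^{\h}_A B$ is zero iff all of its images under the $\mu_{ijk}$ vanish. I would compute, for $t=\Phi^X(x)$ viewed in $X\otimes^{\h}_A B$ (equivalently $(x_i)_{i}=(x\restrict_{F_i})_i$), the images $\mu_{ijk}\big((\eta^X\otimes\id_B)(t)\big)$ and $\mu_{ijk}\big((\id_X\otimes\eta^B)(t)\big)$. The first should come out to $x_j\restrict_{F_{ijk}}$ (roughly: $\eta^X\otimes\id$ inserts a $1$ in the first slot, so the $i,j,k$-component reads off $x_j$ restricted down), and the second to $x_i\restrict_{F_{ijk}}$ (the $1$ gets inserted in the middle slot, leaving $x_i$). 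Hence the kernel condition becomes $x_i\restrict_{F_{ijk}}=x_j\restrict_{F_{ijk}}$ for all $i,j,k$, which (taking $k=j$, noting $F_{ijj}=F_{ij}$) is equivalent to $x_i\restrict_{F_{ij}}=x_j\restrict_{F_{ij}}$ for all $(i,j)$.

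The main obstacle I anticipate is bookkeeping with the two-fold Haagerup tensor product and the direct-sum/multiplier subtleties: $X\otimes^{\h}_A B\otimes^{\h}_A B$ does not decompose as a direct sum, so one cannot simply "work coordinatewise," and that is exactly why Lemma~\ref{lem:mu} (point-separation rather than a direct-sum decomposition) is the right instrument. Care is also needed because $\eta^B(b)=b\otimes 1$ and $\eta^X(x)=x\otimes 1$ involve the unit of $\Multiplier(B)$, so when composing with $\mu_{ijk}$ one must use nondegeneracy to rewrite elements as $x'a$ before evaluating, exactly as in Definition~\ref{def:phi} and Definition~\ref{def:eta}. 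Once the two $\mu_{ijk}$-computations are pinned down, the rest is formal. An alternative, avoiding Lemma~\ref{lem:image-eta} entirely, would be to argue directly: an element $(x_i)_i$ satisfying the compatibility conditions lies, via $\Psi^X$ and $\nu$, in the kernel described in Lemma~\ref{lem:image-eta}, but routing through the already-proved Lemma~\ref{lem:image-eta} is cleaner and reuses the machinery.
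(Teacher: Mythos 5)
Your strategy is essentially the paper's own: factor $\Phi^X=\Psi^X\circ\eta^X$ to obtain the $A$-linear complete isometry, then combine Lemma \ref{lem:image-eta} with the point-separating maps of Lemma \ref{lem:mu} to translate the kernel condition into the compatibility condition on restrictions. Two slips need repair before the argument parses, though. First, you invoke the wrong separating family: with $Z=\bigoplus_{i\in I}X\restrict_{F_i}\cong X\otimes^{\h}_A B$, the three-index maps $\mu^Z_{ijk}$ are defined on $Z\otimes^{\h}_A B\otimes^{\h}_A B\cong X\otimes^{\h}_A B\otimes^{\h}_A B\otimes^{\h}_A B$, which has one $B$-factor too many; the element $(\eta^X\otimes\id_B-\id_X\otimes\eta^B)(t)$ lives in $X\otimes^{\h}_A B\otimes^{\h}_A B=Z\otimes^{\h}_A B$, so the relevant family is the two-index one, $\mu^{X\otimes^{\h}_A B}_{ij}(x\otimes b\otimes b')=(x\restrict_{F_i}b_i)\restrict_{F_{ij}}\cdot b'_j\restrict_{F_{ij}}$, exactly as in the paper. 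With that correction your anticipated outputs are right in substance: writing $\Psi^X(t)=(x_i)_{i\in I}$ one gets $\mu_{ij}\circ(\eta^X\otimes\id_B)(t)=x_j\restrict_{F_{ij}}$ and $\mu_{ij}\circ(\id_X\otimes\eta^B)(t)=x_i\restrict_{F_{ij}}$, so the kernel condition is $x_i\restrict_{F_{ij}}=x_j\restrict_{F_{ij}}$ directly, with no detour through triple intersections. Second, you set up the computation ``for $t=\Phi^X(x)$''; taken literally that only checks the easy containment of $\image\Phi^X$ in the compatible tuples. To get the reverse containment you must run the $\mu_{ij}$-computation for an \emph{arbitrary} $t\in X\otimes^{\h}_A B$ (equivalently an arbitrary $(x_i)_{i\in I}\in\bigoplus_{i\in I}X\restrict_{F_i}$) and then apply Lemma \ref{lem:image-eta} to conclude that every compatible tuple comes from $\eta^X$, hence from $\Phi^X$. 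With these two adjustments your proof coincides with the one in the paper.
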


\begin{example}\label{example:eta-A}
Taking $X=A$ in Lemma \ref{lem:gluing-elements} yields
\[
A = \left\{ (b_i)_{i\in I}\in B\ \left|\ b_i\restrict_{F_{ij}} = b_j\restrict_{F_{ij}}\text{ for all }(i,j)\in I^2 \right.\right\}.
\]
\end{example}

\begin{proof}[Proof of Lemma \ref{lem:gluing-elements}]
Firstly, to see that the given formula for $\Phi^X(x)$ actually defines an element of the $C_0$-direct sum, we compute
\[
\begin{aligned}
\left\| x\restrict_{F_i} \right\|^2 & = \left\| \left\langle x\restrict_{F_i} \, \big|\, x\restrict_{F_i} \right\rangle_{A\restrict_{F_i}} \right\|  = \left\| (\langle x\, |\, x\rangle_A )\restrict_{F_i} \right\| 
\end{aligned}
\]
and note that the right-hand norm vanishes as $i\to\infty$, as shown in Lemma \ref{lem:A-in-B}. The $A$-linearity of $\Phi^X$ is easily checked.

Now $\Phi^X$ fits in to a commuting diagram 
\[
\xymatrix@C=50pt@R=10pt{
& \bigoplus_{i\in I} X\restrict_{F_i}  \\ 
X \ar[ur]^-{\Phi^X} \ar[dr]_-{\eta^X} & \\
& X\otimes^{\h}_A B \ar[uu]_-{\Psi^X}
}
\]
where $\Psi^X$ is the isomorphism from Lemma \ref{lem:X-tensor-B}, and where $\eta^X$ is as in Definition \ref{def:eta}. Since $\Psi^X$ and $\eta^X$ are compete isometries, $\Phi^X$ is likewise. 

To compute the image of $\Phi^X$ we first note that Lemma \ref{lem:mu} ensures that the maps 
\[
\mu_{ij}=\mu^{X\otimes^{\h}_A B}_{ij}: X\otimes^{\h}_A B\otimes^{\h}_A B\to X\restrict_{ij},\qquad  x\otimes b \otimes b' \mapsto  (x \restrict_{F_i} \cdot b_i)\restrict_{F_{ij}} \cdot b'_j\restrict_{F_{ij}} 
\]
separate points, as $(i,j)$ ranges over $I^2$. In view of Lemma \ref{lem:image-eta}, we thus have
\[
\image \eta^X = \bigcap_{(i,j)\in I^2} \ker\left( \mu_{ij}\circ (\eta^X\otimes \id_B - \id_X\otimes \eta^B)\right).
\]
Now we compute, for a fixed $(i,j)\in I^2$, 
\[
\begin{aligned}
 \mu_{ij}\circ (\eta^X\otimes \id_B - \id_X \otimes \eta^B) (x\otimes b) & = \mu_{ij} \left( x\otimes 1 \otimes b- x\otimes b \otimes 1 \right) \\
& = x\restrict_{F_{ij}}\cdot b_j\restrict_{F_{ij}} - (x\restrict_{F_i}\cdot b_i)\restrict_{F_{ij}}.
\end{aligned}
\]
Writing $
\theta_{ij} \left( (x_i)_{i\in I}\right) \coloneqq x_j\restrict_{ij} - x_i\restrict_{ij}
$
we find that
\[
\mu_{ij}\circ (\eta^X\otimes \id_B - \id_X\otimes \eta_B) = \theta_{ij}\circ \Psi^X 
\]
and consequently
\[
\begin{aligned}
\image \Phi^X  = \Psi^X (\image \eta^X) &= \Psi^X \left( \bigcap_{(i,j)\in I^2} \ker \left( \mu_{ij}\circ (\eta^X\otimes \id_B - \id_X\otimes \eta^B)\right) \right)\\
& = \bigcap_{(i,j)\in I^2} \ker \theta_{ij}
\end{aligned}
\]
which is what we wanted to prove.
\end{proof}

\subsection{The gluing functor}\label{subsec:G}

We now define a gluing functor $\functor{G}:\Glue(A,\mathfrak F)\to \CMod(A)$ that will be inverse to the pulling-apart functor $\functor{P}$. 

Given a pair $(Z,\zeta)\in \Glue(A,\mathfrak F)$ we consider the following closed subspace of the Hilbert $B$-module $Z=\bigoplus_{i\in I} Z_i$:
\begin{equation}\label{eq:G-def}
\functor{G}(Z,\zeta)\coloneqq \left\{ \left. (z_i)_{i\in I} \in \bigoplus_{i\in I}Z_i\ \right|\ z_i\restrict_{F_{ij}} = \zeta_{ij}(z_j\restrict_{F_{ij}})\text{ for all }(i,j)\in I^2 \right\}.
\end{equation}

\begin{lemma}\label{lem:GZ-over-A}
The subspace $\functor{G}(Z,\zeta)$ of $Z$ is an $A$-submodule, and for all $z,z'\in \functor{G}(Z,\zeta)$ we have $\langle z \, | \, z'\rangle_B\in A$. Thus $\functor{G}(Z,\zeta)$ is a  Hilbert $A$-module.
\end{lemma}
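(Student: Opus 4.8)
The plan is to verify three things in order: that $\functor{G}(Z,\zeta)$ is closed under the right $A$-action (via the embedding $\eta:A\into B$), that the $B$-valued inner product of two elements of $\functor{G}(Z,\zeta)$ lands in the subalgebra $A\subseteq B$ (Example \ref{example:eta-A}), and that, consequently, the restriction of the $B$-inner product turns $\functor{G}(Z,\zeta)$ into a Hilbert $A$-module — here one needs to know the resulting $A$-valued inner product is complete, which is automatic since $\functor{G}(Z,\zeta)$ is norm-closed in the Hilbert $B$-module $Z$ and the $A$-norm coincides with the $B$-norm.

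First I would check the module property. If $z=(z_i)_{i\in I}\in \functor{G}(Z,\zeta)$ and $a\in A$, then $za = (z_i\cdot (a\restrict_{F_i}))_{i\in I}$ since $\eta(a)=(a\restrict_{F_i})_{i\in I}$. For each $(i,j)$, using that the restriction maps and the $\zeta_{ij}$ are $A\restrict_{F_{ij}}$-linear, $(z_i a)\restrict_{F_{ij}} = (z_i\restrict_{F_{ij}})\cdot (a\restrict_{F_{ij}}) = \zeta_{ij}(z_j\restrict_{F_{ij}})\cdot(a\restrict_{F_{ij}}) = \zeta_{ij}((z_j a)\restrict_{F_{ij}})$, so $za\in\functor{G}(Z,\zeta)$. (Closedness under addition and under norm limits is immediate, since each defining condition is a closed linear condition.)

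Next, the inner-product computation. For $z,z'\in\functor{G}(Z,\zeta)$ write $b\coloneqq \langle z\,|\,z'\rangle_B\in B$, so $b_i = \langle z_i\,|\,z'_i\rangle_{A\restrict_{F_i}}$. By Example \ref{example:eta-A} it suffices to show $b_i\restrict_{F_{ij}} = b_j\restrict_{F_{ij}}$ for all $(i,j)$. We compute, using that the restriction map $A\restrict_{F_i}\onto A\restrict_{F_{ij}}$ is compatible with inner products (Definition \ref{def:restriction}), that $\zeta_{ij}$ is unitary, and the defining relations of $\functor{G}$:
\[
b_i\restrict_{F_{ij}} = \left\langle z_i\restrict_{F_{ij}} \,\middle|\, z'_i\restrict_{F_{ij}}\right\rangle_{A\restrict_{F_{ij}}} = \left\langle \zeta_{ij}(z_j\restrict_{F_{ij}}) \,\middle|\, \zeta_{ij}(z'_j\restrict_{F_{ij}})\right\rangle_{A\restrict_{F_{ij}}} = \left\langle z_j\restrict_{F_{ij}} \,\middle|\, z'_j\restrict_{F_{ij}}\right\rangle_{A\restrict_{F_{ij}}} = b_j\restrict_{F_{ij}}.
\]
Hence $b\in A$.

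Finally I would assemble the conclusion: $\functor{G}(Z,\zeta)$ carries a right $A$-action and an $A$-valued inner product $\langle z\,|\,z'\rangle_A \coloneqq \langle z\,|\,z'\rangle_B$ satisfying the usual axioms (these are inherited verbatim from the Hilbert $B$-module structure, using that $\eta$ is an injective $*$-homomorphism so positivity and definiteness transfer). Completeness is the only nonformal point, and it follows because $\functor{G}(Z,\zeta)$ is norm-closed in the complete space $Z$ and the $A$-norm $\|z\|_A = \|\langle z\,|\,z\rangle_A\|^{1/2} = \|\langle z\,|\,z\rangle_B\|^{1/2} = \|z\|_B$ agrees with the ambient norm. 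I do not expect a serious obstacle here; the only thing to be careful about is keeping track of which restriction maps are inner-product-compatible and invoking Example \ref{example:eta-A} (rather than re-deriving the description of $A$ inside $B$) for the last step.
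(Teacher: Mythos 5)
Your proposal is correct and follows essentially the same route as the paper's proof: $A$-stability via the $A\restrict_{F_{ij}}$-linearity of the restriction maps and the $\zeta_{ij}$, and the same chain of equalities (restriction compatibility, the defining relation of $\functor{G}(Z,\zeta)$, unitarity of $\zeta_{ij}$) combined with Example \ref{example:eta-A} to place the inner product in $A$. Your explicit remarks on completeness and the agreement of the $A$- and $B$-norms merely spell out what the paper leaves implicit.
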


\begin{proof}
The fact that $\functor{G}(Z,\zeta)$ is stable under right multiplication follows easily from $A\restrict_{F_{ij}}$-linearity of $\zeta_{ij}$.
For the assertion about the inner product, let $z$ and $z'$ be two elements of $\functor{G}(Z,\zeta)$. To show that $\langle z \, | \,z'\rangle_B$ lies in $A$ it will suffice, by Example \ref{example:eta-A}, to prove that for each $(i,j)\in I^2$ we have
$
( \langle z_i \, |\, z'_i \rangle_{A\restrict_{F_i}})\restrict_{F_{ij}} = ( \langle z_j \, | \, z'_j \rangle_{A\restrict_{F_j}})\restrict_{F_{ij}}.
$
This equality is established as follows:
\begin{equation*}\label{eq:GZ-over-A-pf2}
\begin{aligned}
\left( \langle z_i \,| \, z'_i \rangle_{A\restrict_{F_i}}\right)\restrict_{F_{ij}} & = \left\langle \left. z_i\restrict_{F_{ij}} \, \right|\, z'_i \restrict_{F_{ij}}  \right\rangle_{A\restrict_{F_{ij}}}  =\left\langle \left. \zeta_{ij}( z_j\restrict_{F_{ij}})\, \right|\, \zeta_{ij}(z'_j\restrict_{F_{ij}}) \right\rangle_{A\restrict_{F_{ij}}} \\ & = \left\langle \left. z_j \restrict_{F_{ij}}\, \right|\, z'_j\restrict_{F_{ij}} \right\rangle_{A\restrict_{F_{ij}}} 
 =\left( \langle z_j \, | \, z'_j \rangle_{A_{F_j}}\right)\restrict_{F_{ij}}
\end{aligned}
\end{equation*}
where the second equality holds thanks to our assumption that  $z$ and $z'$  lie in $\functor{G}(Z,\zeta)$, while the third equality holds because $\zeta_{ij}$ is a unitary isomorphism.
\end{proof}

\begin{lemma}\label{lem:GZ-morphisms}
Let $\alpha=(\alpha_i)_{i\in I}:(Z,\zeta)\to (W,\omega)$ be a morphism in $\Glue(A,\mathfrak F)$. The map $Z\to W$, $(z_i)_{i\in I}\mapsto (\alpha_i(z_i))_{i\in I}$ restricts to an adjointable map  of Hilbert $A$-modules, $\functor{G}\alpha:\functor{G}(Z,\zeta)\to \functor{G}(W,\omega)$.
\end{lemma}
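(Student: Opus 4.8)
The plan is to verify three things: that the map $Z\to W$ carries $\functor{G}(Z,\zeta)$ into $\functor{G}(W,\omega)$, that the restricted map $\functor{G}\alpha$ is adjointable as a map of Hilbert $A$-modules, and implicitly that it is bounded (with $\|\functor{G}\alpha\|\le\sup_i\|\alpha_i\|$). The ambient map $\alpha\colon Z\to W$ between the Hilbert $B$-modules $Z=\bigoplus_i Z_i$ and $W=\bigoplus_i W_i$ is just the diagonal assembly $(z_i)_i\mapsto(\alpha_i(z_i))_i$; by the equivalence of Definition \ref{def:Zi} this is a well-defined adjointable $B$-module map with adjoint $(\alpha_i^*)_i$ and norm $\sup_i\|\alpha_i\|$, since $\sup_i\|\alpha_i\|<\infty$ by the definition of morphism in $\Glue(A,\mathfrak F)$.

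First I would check that $\alpha$ maps $\functor{G}(Z,\zeta)$ into $\functor{G}(W,\omega)$. Take $(z_i)_i\in\functor{G}(Z,\zeta)$, so $z_i\restrict_{F_{ij}}=\zeta_{ij}(z_j\restrict_{F_{ij}})$ for all $(i,j)$. Apply $\alpha_i\restrict_{F_{ij}}$ to both sides and use the morphism compatibility relation $\alpha_i\restrict_{F_{ij}}\circ\zeta_{ij}=\omega_{ij}\circ\alpha_j\restrict_{F_{ij}}$ from Definition \ref{def:Glue}: the left side becomes $(\alpha_i(z_i))\restrict_{F_{ij}}$ (since $\alpha_i\restrict_{F_{ij}}(z_i\restrict_{F_{ij}})=(\alpha_i(z_i))\restrict_{F_{ij}}$ by Definition \ref{def:restriction}, applied to the quotient $A\restrict_{F_i}\onto A\restrict_{F_{ij}}$) and the right side becomes $\omega_{ij}(\alpha_j(z_j)\restrict_{F_{ij}})$. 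Hence $(\alpha_i(z_i))_i$ satisfies the defining relation of $\functor{G}(W,\omega)$, so $\functor{G}\alpha$ is well-defined.

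Next I would address adjointability. The natural candidate for the adjoint is $\functor{G}(\alpha^*)$, where $\alpha^*=(\alpha_i^*)_i$ is the morphism $(W,\omega)\to(Z,\zeta)$ — one should note in passing that $\alpha^*$ really is a morphism in $\Glue$, i.e.\ $\alpha_i^*\restrict_{F_{ij}}\circ\omega_{ij}=\zeta_{ij}\circ\alpha_j^*\restrict_{F_{ij}}$, which follows by taking adjoints in the compatibility relation for $\alpha$ together with $\zeta_{ij}^*=\zeta_{ji}$, $\omega_{ij}^*=\omega_{ji}$ (from the Remark after Definition \ref{def:Glue}) and relabelling. By the previous paragraph $\functor{G}(\alpha^*)$ maps $\functor{G}(W,\omega)$ into $\functor{G}(Z,\zeta)$. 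Now for $z\in\functor{G}(Z,\zeta)$ and $w\in\functor{G}(W,\omega)$, regarded as elements of the Hilbert $B$-modules, we have $\langle\functor{G}\alpha(z)\,|\,w\rangle_B=\langle\alpha(z)\,|\,w\rangle_B=\langle z\,|\,\alpha^*(w)\rangle_B=\langle z\,|\,\functor{G}(\alpha^*)(w)\rangle_B$ using adjointability of $\alpha$ over $B$; and by Lemma \ref{lem:GZ-over-A} all these inner products lie in $A\subseteq B$, so the identity holds as an identity of $A$-valued inner products. Thus $\functor{G}(\alpha^*)$ is an adjoint of $\functor{G}\alpha$, so $\functor{G}\alpha$ is adjointable with $(\functor{G}\alpha)^*=\functor{G}(\alpha^*)$.

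This is essentially routine, with no real obstacle: everything is a matter of unwinding the defining relations of $\Glue(A,\mathfrak F)$ and invoking Definition \ref{def:Zi} and Lemma \ref{lem:GZ-over-A}. The only point requiring a little care is making sure that the inner-product manipulation establishing adjointability is legitimate over $A$ rather than merely over $B$, which is exactly what Lemma \ref{lem:GZ-over-A} delivers; the boundedness of $\functor{G}\alpha$ then comes for free from that of $\alpha$ over $B$, giving $\|\functor{G}\alpha\|\le\sup_i\|\alpha_i\|$.
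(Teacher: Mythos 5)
Your proof is correct and follows essentially the same route as the paper: first use the morphism compatibility relation to see that the diagonal map sends $\functor{G}(Z,\zeta)$ into $\functor{G}(W,\omega)$, then apply the same argument to $\alpha^*=(\alpha_i^*)_{i\in I}$ and use the $B$-valued inner product (which takes values in $A$ by Lemma \ref{lem:GZ-over-A}) to conclude that $\functor{G}(\alpha^*)$ is an adjoint of $\functor{G}\alpha$. Your explicit check that $\alpha^*$ is a morphism of $\Glue(A,\mathfrak F)$ is a detail the paper subsumes in its earlier assertion that $\Glue(A,\mathfrak F)$ is a $C^*$-category, but it is welcome and correct.
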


\begin{proof}
First note, as we did in Definition \ref{def:Zi}, that $(z_i)_{i\in I}\mapsto (\alpha_i(z_i))_{i\in I}$ is a well-defined adjointable map of Hilbert $B$-modules $Z\to W$. Let us show that this map sends $\functor{G}(Z,\zeta)$ into $\functor{G}(W,\omega)$. Given $z=(z_i)_{i\in I}\in \functor{G}(Z,\zeta)$, we compute for all $(i,j)\in I^2$:
\begin{multline*}
 (\alpha_i(z_i))\restrict_{F_{ij}} = \alpha_i\restrict_{F_{ij}} \left( z_i\restrict_{F_{ij}}\right) = \alpha_i\restrict_{F_{ij}} \circ \zeta_{ij}\left( z_j\restrict_{F_{ij}}\right) = \omega_{ij}\circ \alpha_j\restrict_{F_{ij}} \left( z_j\restrict_{F_{ij}}\right) \\
 = \omega_{ij}\left( (\alpha_j(z_j)\restrict_{F_{ij}}\right).
\end{multline*}
Thus $\alpha$ induces a map $\functor{G}\alpha:\functor{G}(Z,\zeta)\to \functor{G}(W,\omega)$, as required.

The same argument applied to $\alpha^*=(\alpha_i^*)_{i\in I}:(W,\omega)\to (Z,\zeta)$ shows that $\functor{G}(\alpha^*)$ sends $\functor{G}(W,\omega)$ into $\functor{G}(Z,\zeta)$, and the fact that each $\alpha_i^*$ is adjoint to $\alpha_i$ then ensures that $\functor{G}(\alpha^*)$ is adjoint to $\functor{G}\alpha$. 
\end{proof}

\begin{definition}\label{def:G}
Define a $*$-functor $\functor{G}:\Glue(A,\mathfrak F) \to \CMod(A)$ on objects by defining $\functor{G}(Z,\zeta)$ as in \eqref{eq:G-def}, and on morphisms by defining $\functor{G}\alpha$ as in Lemma \ref{lem:GZ-morphisms}.
\end{definition}

We are going to prove Theorem \ref{thm:main} by showing that $\functor{G}$ is an inverse to $\functor{P}$. The proof occupies Sections \ref{subsec:pf-begin}--\ref{subsec:pf-end}

\subsection{Proof that $\functor{G}\functor{P}\cong \id$}\label{subsec:pf-begin}

Let $X$ be a Hilbert $A$-module. Then $\functor{G}\functor{P}X$ is an $A$-submodule of the Hilbert $B$-module $\bigoplus_{i\in I} X\restrict_i$; specifically, 
\[
\functor{G}\functor{P}X = \left\{ \left. (x_i)_{i\in I} \in \bigoplus_{i\in I} X\restrict_i \ \right|\ x_i\restrict_{ij}=x_j\restrict_{ij}\text{ for all }(i,j)\in I^2 \right\}
\]
where we are suppressing the canonical isomorphisms $\kappa^X_{ij}:X\restrict_j\restrict_{ij} \to X\restrict_i \restrict_{ij}$.

Lemma \ref{lem:gluing-elements} shows that the map $\Phi^X: X\to \functor{G}\functor{P}X$ is an isometric $A$-linear isomorphism, and since both $X$ and $\functor{G}\functor{P}X$ are Hilbert $A$-modules this ensures that $\Phi^X$ is a unitary isomorphism. The fact that $\Phi^X$ is natural in $X$ is clear from its definition, and so  $\Phi:\id_{\CMod(A)}\to \functor{G}\functor{P}$ is a unitary natural isomorphism.\hfill\qed

\subsection{Proof that $\functor{P}\functor{G}\cong \id$, begun}

Let $(Z,\zeta)$ be an object in $\Glue(A,\mathfrak F)$, and consider the object $\functor{P}\functor{G}(Z,\zeta)$ of $\Glue(A,\mathfrak F)$. For each $i\in I$ we have 
\[
\functor{P}\functor{G}(Z,\zeta)_i = \functor{G}(Z,\zeta)\restrict_i
\]
with the gluing isomorphisms 
\[
\kappa^{\functor{G}(Z,\zeta)}_{ij}: \functor{G}(Z,\zeta)\restrict_j\restrict_{ij}\xrightarrow{\cong} \functor{G}(Z,\zeta)\restrict_i\restrict_{ij}
\]
being the canonical isomorphisms as in Definition \ref{def:P}. Lemma \ref{lem:X-tensor-B} gives a natural isomorphism 
\[
\bigoplus_{i\in I} \functor{P}\functor{G}(Z,\zeta)_i\cong \functor{G}(Z,\zeta)\otimes^{\h}_A B.
\]
(Recall from Theorem \ref{thm:Haagerup} part (b) that the Hilbert-module tensor product is completely isometrically isomorphic to the Haagerup tensor product.) 

Since $\functor{G}(Z,\zeta)$ is by definition a Hilbert $A$-submodule of the Hilbert $B$-module $Z=\bigoplus_{i\in I} Z_i$,  the Hilbert $B$-module $\functor{G}(Z,\zeta)\otimes^{\h}_A B$ embeds completely isometrically into  $Z \otimes^{\h}_A B$ (Theorem \ref{thm:Haagerup} part (c)). This latter tensor product is an operator $B$-module, but it is not a Hilbert $B$-module, because $Z$ is not a Hilbert $A$-module. 

Since $Z$ is an operator $B$-module we have a completely contractive multiplication map $\epsilon^Z:Z\otimes^{\h}_A B\xrightarrow{z\otimes b\mapsto zb} Z$. We consider the restriction of $\epsilon^Z$ to the Hilbert $B$-module $\functor{G}(Z,\zeta)\otimes^{\h}_A B$.

\begin{lemma}\label{lem:epsilon}
\begin{enumerate}[\rm(a)]
\item The map $\epsilon^Z: \functor{G}(Z,\zeta)\otimes^{\h}_A B \to Z$ satisfies 
\[
\langle \epsilon^Z(x)\, |\, \epsilon^Z(y) \rangle_B = \langle x\, |\, y\rangle_B
\]
 for all $x,y\in \functor{G}(Z,\zeta)\otimes^{\h}_A B$.
\item For each $i\in I$ let $\epsilon^Z_i : \functor{G}(Z,\zeta)\restrict_{F_i} \to Z_i$ be the composition
\[
 \functor{G}(Z,\zeta)\restrict_{F_i} \xrightarrow{\left(\pi^{\functor{P}\functor{G}(Z,\zeta)}_i\right)^*} \bigoplus_{j\in I} \functor{G}(Z,\zeta)\restrict_{F_j} \xrightarrow{\left(\Psi^{\functor{G}(Z,\zeta)}\right)^{-1}} \functor{G}(Z,\zeta)\otimes^{\h}_A B \xrightarrow{\epsilon^Z} Z \xrightarrow{\pi^Z_i} Z_i.
\]
For all $(i,j)\in I^2$ we have
\[
\epsilon^Z_i \restrict_{F_{ij}}\circ \kappa_{ij}^{\functor{G}(Z,\zeta)} = \zeta_{ij}\circ \epsilon^Z_j\restrict_{F_{ij}}
\]
as maps $\functor{G}(Z,\zeta)\restrict_{F_j} \restrict_{F_{ij}} \to Z_i\restrict_{F_{ij}}$.
\item If $\epsilon^Z$ maps $\functor{G}(Z,\zeta)\otimes^{\h}_A B$ surjectively onto $Z$, for every object $(Z,\zeta)$ of $\Glue(A,\mathfrak F)$, then the functor $\functor{P}\functor{G}$ is unitarily naturally isomorphic to $\id_{\Glue(A,\mathfrak F)}$.
\end{enumerate}
\end{lemma}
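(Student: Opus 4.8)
All three parts admit direct arguments, and I would take them in order, with part~(c) assembling the first two.

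\emph{Part (a).} The plan is to reduce to elementary tensors. By Lemma~\ref{lem:GZ-over-A} the Hilbert $A$-module $\functor{G}(Z,\zeta)$ carries the inner product obtained by restricting the $B$-valued inner product on $Z$, and by Theorem~\ref{thm:Haagerup}(b) the Haagerup tensor product $\functor{G}(Z,\zeta)\otimes^{\h}_A B$ is the Hilbert-module tensor product $\functor{G}(Z,\zeta)\otimes^{*}_A B$, whose $B$-valued inner product on elementary tensors is $\langle w\otimes b\mid w'\otimes b'\rangle_B=b^*\langle w\mid w'\rangle_A b'$. Since $\epsilon^Z(w\otimes b)=wb$ and the inner product on $Z$ gives $\langle wb\mid w'b'\rangle_B=b^*\langle w\mid w'\rangle_B b'$, the two inner products in the statement agree on elementary tensors. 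As both sides of the asserted identity are norm-continuous in each variable (the left-hand side because $\epsilon^Z$ is completely contractive) and the algebraic tensor product is dense, this suffices.

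\emph{Part (b).} Here the key step is an explicit formula for $\epsilon^Z_i$. For $w=(w_k)_{k\in I}\in\functor{G}(Z,\zeta)$ I would factor $w=w'a$ with $w'\in\functor{G}(Z,\zeta)$ and $a\in A$ (using nondegeneracy of the Hilbert $A$-module $\functor{G}(Z,\zeta)$) and then follow $w\restrict_{F_i}$ through the four maps defining $\epsilon^Z_i$: $\bigl(\pi^{\functor{P}\functor{G}(Z,\zeta)}_i\bigr)^*$ is the inclusion of the $i$-th summand; $\bigl(\Psi^{\functor{G}(Z,\zeta)}\bigr)^{-1}$ carries the result to the tensor $w\otimes 1\restrict_{F_i}$; $\epsilon^Z$ sends this to $w\cdot 1\restrict_{F_i}$; and $\pi^Z_i$ extracts the $i$-th component. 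The net effect is $\epsilon^Z_i(w\restrict_{F_i})=w_i$. With this in hand, $\epsilon^Z_i\restrict_{F_{ij}}\circ\kappa^{\functor{G}(Z,\zeta)}_{ij}$ and $\zeta_{ij}\circ\epsilon^Z_j\restrict_{F_{ij}}$ send $(w\restrict_{F_j})\restrict_{F_{ij}}$ to $w_i\restrict_{F_{ij}}$ and to $\zeta_{ij}(w_j\restrict_{F_{ij}})$ respectively, and these coincide precisely because $w$ satisfies the defining relation $w_i\restrict_{F_{ij}}=\zeta_{ij}(w_j\restrict_{F_{ij}})$ of $\functor{G}(Z,\zeta)$.

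\emph{Part (c).} Assume $\epsilon^Z$ is onto $Z$ for every $(Z,\zeta)$. Part~(a) then says that the restriction of $\epsilon^Z$ to the Hilbert $B$-module $\functor{G}(Z,\zeta)\otimes^{\h}_A B$ preserves inner products and is surjective, hence is a unitary isomorphism of Hilbert $B$-modules onto $Z$. Transporting along the unitary $\Psi^{\functor{G}(Z,\zeta)}$ of Lemma~\ref{lem:X-tensor-B} and reading off components in the sense of Definition~\ref{def:Zi} exhibits each $\epsilon^Z_i\colon\functor{G}(Z,\zeta)\restrict_{F_i}\to Z_i$ as a unitary isomorphism of Hilbert $A\restrict_{F_i}$-modules. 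By part~(b) the family $\epsilon^Z=(\epsilon^Z_i)_{i\in I}$ is a morphism $\functor{P}\functor{G}(Z,\zeta)\to(Z,\zeta)$ in $\Glue(A,\mathfrak F)$, and being unitary in each component it is a unitary isomorphism there, with inverse the adjoint family $((\epsilon^Z_i)^*)_{i\in I}$. Finally I would check naturality in $(Z,\zeta)$: for $\alpha=(\alpha_i)_{i\in I}\colon(Z,\zeta)\to(W,\omega)$ the square reduces, via the formula $\epsilon^Z_i(w\restrict_{F_i})=w_i$ and the definition of $\functor{G}\alpha$ (Lemma~\ref{lem:GZ-morphisms}), to the identities $\epsilon^W_i\circ(\functor{P}\functor{G}\alpha)_i=\alpha_i\circ\epsilon^Z_i$, both sides carrying $w_i$ to $\alpha_i(w_i)$. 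Thus $\epsilon\colon\functor{P}\functor{G}\to\id_{\Glue(A,\mathfrak F)}$ is a unitary natural isomorphism.

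The one genuinely substantive ingredient, surjectivity of $\epsilon^Z$, is by design not part of this lemma; it appears only as the hypothesis of~(c) and will be proved separately. Within the lemma the only points calling for some care are the identification in~(a) of the two a priori different $B$-valued inner products, and in~(b) the bookkeeping through $\Psi^{\functor{G}(Z,\zeta)}$, the canonical maps $\kappa$, and the iterated restriction functors.
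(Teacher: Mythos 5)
Your proposal is correct and follows essentially the same route as the paper: check (a) on elementary tensors using Lemma \ref{lem:GZ-over-A}, prove (b) via the explicit formula $\epsilon^Z_i(z\restrict_{F_i})=z_i$ together with the defining relation of $\functor{G}(Z,\zeta)$, and assemble (c) from (a), (b), and surjectivity. The only cosmetic difference is that the paper deduces naturality of $\epsilon$ from the naturality of the multiplication map $Z\otimes^{\h}_A B\to Z$ with respect to completely bounded $B$-module maps, whereas you verify it componentwise, which amounts to the same check.
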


\begin{proof}
Part (a) is easily checked: for $z,z'\in\functor{G}(Z,\zeta)$ and  $b,b'\in B$ we have
\[
\langle \epsilon^Z(z\otimes b)\, |\, \epsilon^Z(z'\otimes b') \rangle_B = \langle zb\, |\, z'b'\rangle_B = b^*\langle z\, |\, z' \rangle_B b' = \langle z\otimes b\, |\, z'\otimes b'\rangle_B,
\]
where the final equality makes sense because, as shown in Lemma \ref{lem:GZ-over-A}, the inner product $\langle z\, |\, z'\rangle_B$ actually lies in the subalgebra $A$.

For part (b): the map
$
\epsilon^Z_i \restrict_{F_{ij}}\circ \kappa^{\functor{G}(Z,\zeta)}_{ij} :  \functor{G}(Z,\zeta)\restrict_{F_j} \restrict_{F_{ij}} \to Z_i \restrict_{F_{ij}}
$
is given, for $z\in \functor{G}(Z,\zeta)$, by
\[
z\restrict_{F_j} \restrict_{F_{ij}} \xmapsto{\kappa_{ij}} z\restrict_{F_i}\restrict_{F_{ij}} \xmapsto{\epsilon_i\restrict_{F_{ij}}} z_i \restrict_{F_{ij}}
\]
while the map
$
\zeta_{ij}\circ \epsilon^Z_j\restrict_{F_{ij}}  : \functor{G}(Z,\zeta)\restrict_{F_j} \restrict_{F_{ij}} \to Z_i\restrict_{F_{ij}}
$
is given by
\[
z\restrict_{F_j}\restrict_{F_{ij}} \xmapsto{ \epsilon_j\restrict_{F_{ij}}} z_j\restrict_{F_{ij}}\xmapsto{\zeta_{ij}} \zeta_{ij}(z_j\restrict_{F_{ij}}).
\]
The fact that $z$ lies in $\functor{G}(Z,\zeta)$ means that $z_i\restrict_{F_{ij}} = \zeta_{ij}(z_j\restrict_{F_{ij}})$ for all $(i,j)$, and this proves  part (b).

For part (c), if $\epsilon^Z:\functor{G}(Z,\zeta)\otimes^{\h}_A B\to Z$ is surjective then part (a) ensures that this map is a unitary isomorphism of Hilbert $B$-modules, while part (b) shows that this map is a morphism $\functor{P}\functor{G}(Z,\zeta)\to (Z,\zeta)$ in $\Glue(A,\mathfrak F)$. The multiplication map $\epsilon^Z:Z\otimes^{\h}_A B\to Z$ is natural in $Z$ with respect to all completely bounded $B$-module maps, so it is certainly natural with respect to all of the morphisms in $\Glue(A,\mathfrak F)$; thus the unitary isomorphisms $\epsilon^Z$ combine to give a natural unitary isomorphism $\functor{P}\functor{G}\to \id_{\Glue}$.
\end{proof}

\subsection{Algebraic properties of the map $\delta$} 
Fix an object $(Z,\zeta)$ of $\Glue(A,\mathfrak F)$. Lemma \ref{lem:epsilon} shows that in order to complete the proof of Theorem \ref{thm:main} it will suffice to show that the multiplication map $\epsilon^Z:\functor{G}(Z,\zeta)\otimes^{\h}_A B\to Z$ is surjective. We do this by proving that the map $\delta:Z\to Z\otimes^{\h}_A B$ defined in Lemma \ref{lem:delta} has image contained in $\functor{G}(Z,\zeta)\otimes^{\h}_A B$, and satisfies $\epsilon^Z\circ \delta = \id_Z$. The second part is easy (see part (a) of Lemma \ref{lem:delta-algebra}, below), but the first is more challenging, and it is at this point that we use the weak expectation $\iota:B\to A^{\dual\dual}$ from Lemma \ref{lem:A-in-B}, importing the main technical argument from \cite{Crisp-descent}; see Lemma \ref{lem:kernels}.

\begin{lemma}\label{lem:delta-algebra}
Let $(Z,\zeta)$ be an object of $\Glue(A,\mathfrak F)$,  let $\delta:Z\to Z\otimes^{\h}_A B$ be the map defined in Lemma \ref{lem:delta}, and let $\eta^Z:Z\to Z\otimes^{\h}_A B$ be as in Definition \ref{def:eta}. We have:
\begin{enumerate}[\rm(a)]
\item $\epsilon^Z\circ \delta = \id_Z$.
\item $(\delta\otimes\id_B)\circ \delta = (\eta_Z\otimes \id_B)\circ \delta$ as maps $Z\to Z\otimes^{\h}_A B\otimes^{\h}_A B$.
\item $\functor{G}(Z,\zeta) = \ker(\eta^Z-\delta)$.
\end{enumerate}
\end{lemma}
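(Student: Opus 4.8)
The plan is to prove the three parts essentially independently, with parts (a) and (c) being short and part (b) carrying the bulk of the index bookkeeping.

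For part (a), I would first record the elementary identity $\epsilon^Z\circ\phi^Z_{pq}=\delta_{pq}\,\id_{Z_p}$ (Kronecker delta) for all $(p,q)\in I^2$. Unwinding Definition \ref{def:phi}, $\epsilon^Z\circ\phi^Z_{pq}(z_p\restrict_{F_{pq}})=z_p\cdot 1\restrict_{F_q}=z_p\cdot 1\restrict_{F_p}\cdot 1\restrict_{F_q}$, which equals $z_p$ when $p=q$ (note that $Z_p\restrict_{F_p}=Z_p$, the restriction being the identity, since $J_p$ is zero in $A\restrict_{F_p}$) and equals $0$ when $p\neq q$ because $1\restrict_{F_p}$ and $1\restrict_{F_q}$ are orthogonal central projections in $\Multiplier(B)$. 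Since $\epsilon^Z$ is completely contractive, hence norm-continuous, it commutes with the convergent iterated sum of Lemma \ref{lem:delta}, so
\[
\epsilon^Z\circ\delta(z)=\sum_{j\in I}\sum_{i\in I}\epsilon^Z\circ\phi^Z_{ij}\circ\zeta_{ij}(z_j\restrict_{F_{ij}})=\sum_{j\in I}\zeta_{jj}(z_j)=\sum_{j\in I}z_j=z,
\]
using $\zeta_{jj}=\id_{Z_j}$ and $z=(z_j)_{j\in I}$.

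For parts (b) and (c) I would test the asserted equalities against the point-separating families $\{\mu^Z_{ij}\}$ and $\{\mu^Z_{ijk}\}$ of Lemma \ref{lem:mu}. The computations rest on two observations. First, from Definitions \ref{def:phi} and \ref{def:B}, one has $\mu^Z_{ij}\circ\phi^Z_{pq}=\delta_{ip}\delta_{jq}\,\id$: the map $\phi^Z_{pq}$ produces a tensor of the form $v\otimes 1\restrict_{F_q}$ with $v\in Z_p$, and $\mu^Z_{ij}$ applies $\pi^Z_i$ to the first factor (killing it unless $p=i$) and $\pi^B_j$ to the second (which sends $1\restrict_{F_q}$ to $\delta_{jq}1\restrict_{F_j}$, acting as a unit after restriction to $F_{ij}$). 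Second, $\mu^Z_{ij}\circ\eta^Z$ sends $z\mapsto z_i\restrict_{F_{ij}}$, since the multiplier $1$ in $\eta^Z(z)=z\otimes 1$ contributes $1\restrict_{F_{ij}}$. Using continuity of the $\mu$-maps to pass them through the sums of Lemma \ref{lem:delta}, one gets $\mu^Z_{ij}\circ\delta\colon z\mapsto\zeta_{ij}(z_j\restrict_{F_{ij}})$ and $\mu^Z_{ij}\circ\eta^Z\colon z\mapsto z_i\restrict_{F_{ij}}$. Part (c) is then immediate: since the $\mu^Z_{ij}$ separate the points of $Z\otimes^{\h}_A B$, we have $(\eta^Z-\delta)(z)=0$ if and only if $z_i\restrict_{F_{ij}}=\zeta_{ij}(z_j\restrict_{F_{ij}})$ for every $(i,j)\in I^2$, which is exactly the condition in \eqref{eq:G-def} for $z\in\functor{G}(Z,\zeta)$.

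For part (b) I would run the same strategy one dimension up, using the separating maps $\mu^Z_{ijk}$ on $Z\otimes^{\h}_A B\otimes^{\h}_A B$ together with the reduction formula $\mu^Z_{ijk}(t\otimes b)=\bigl(\mu^Z_{ij}(t)\bigr)\restrict_{F_{ijk}}\cdot b_k\restrict_{F_{ijk}}$ for $t\in Z\otimes^{\h}_A B$ and $b\in B$ in the last tensor factor (and its analogue for the first factor, handling $\eta^Z\otimes\id_B$). Expanding $\delta(z)$, applying $\delta\otimes\id_B$ (which only touches the first factor) or $\eta^Z\otimes\id_B$, and then evaluating $\mu^Z_{ijk}$ termwise — each iterated sum collapsing to a single surviving term thanks to the orthogonality of the $1\restrict_{F_i}$ — I expect to reach
\[
\mu^Z_{ijk}\circ(\delta\otimes\id_B)\circ\delta\colon z\mapsto \zeta_{ij}\restrict_{F_{ijk}}\bigl(\zeta_{jk}\restrict_{F_{ijk}}(z_k\restrict_{F_{ijk}})\bigr),\qquad \mu^Z_{ijk}\circ(\eta^Z\otimes\id_B)\circ\delta\colon z\mapsto \zeta_{ik}\restrict_{F_{ijk}}(z_k\restrict_{F_{ijk}}).
\]
These agree by the associativity axiom $\zeta_{ik}\restrict_{F_{ijk}}=\zeta_{ij}\restrict_{F_{ijk}}\circ\zeta_{jk}\restrict_{F_{ijk}}$ from Definition \ref{def:Glue}, and since the $\mu^Z_{ijk}$ separate points this yields (b).

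The main obstacle will be the index bookkeeping in part (b): one must keep straight the (choice-independent) lifts used in the definition of the $\phi^Z_{pq}$ and the exact order in which the various restriction maps $\restrict_{F_{ij}}$, $\restrict_{F_{ijk}}$ are composed, so as to see that applying $\delta$ twice reproduces precisely the composite $\zeta_{ij}\restrict_{F_{ijk}}\circ\zeta_{jk}\restrict_{F_{ijk}}$ rather than something else. This is exactly the point at which the cocycle condition on the gluing isomorphisms enters, and it is what makes $\delta$ a genuine descent datum rather than merely a right inverse of $\epsilon^Z$.
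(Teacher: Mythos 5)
Your proposal is correct and follows essentially the same route as the paper: part (a) via the Kronecker-type identity $\epsilon^Z\circ\phi^Z_{ij}$ together with $\zeta_{ii}=\id_{Z_i}$, and parts (b) and (c) by testing against the point-separating families $\mu^Z_{ij}$ and $\mu^Z_{ijk}$ of Lemma \ref{lem:mu}, with the cocycle condition entering in (b) exactly as in the paper's computation. The only cosmetic difference is that the paper organizes the collapse of the double sums through the maps $\delta_{ij}=(\pi^Z_i\otimes\pi^B_j)\circ\delta_{ij}\circ\pi^Z_j$ and explicit lifts $w_j,w_i$, whereas you use the reduction formula for $\mu^Z_{ijk}$; the bookkeeping is the same.
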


\begin{proof}
For part (a): let $\phi^Z_{ij}:Z_i\restrict_{F_{ij}}\into Z\otimes^{\h}_A B$ be as in Definition \ref{def:phi}. For each $(i,j)\in I^2$ the map $\epsilon^Z\circ \phi^Z_{ij} : Z_i\restrict_{F_{ij}} \to Z$ is given by
\[
\epsilon^Z\circ \phi^Z_{ij} (z_i\restrict_{ij}) = \epsilon^Z (z_i\otimes 1\restrict_{F_j}) = \begin{cases} z_i & \text{if $i=j$} \\ 0 & \text{if $i\neq j$,}\end{cases}
\]
and so for each $z\in Z$ we have
\[
\epsilon^Z\circ \delta(z) = \sum_{j\in I}\sum_{i\in I} \epsilon^Z\circ \phi^Z_{ij}\circ \zeta_{ij}(z_j\restrict_{F_{ij}}) = \sum_{i\in I} \zeta_{ii}(z_i\restrict_{F_{ii}}) = \sum_{i\in I} z_i =z.
\]

For part (b): Lemma \ref{lem:mu} implies that the maps 
\[
\mu^Z_{ijk}: Z\otimes^{\h}_A B\otimes^{\h}_A B \to Z_i\restrict_{F_{ijk}},\qquad z\otimes b\otimes b'\mapsto z_i\restrict_{F_{ijk}}\cdot b_j\restrict_{F_{ijk}}\cdot b'_k\restrict_{F_{ijk}}
\]
separate points, as $(i,j,k)$ ranges over $I^3$. For each $(i,j,k)$ the map $\mu^Z_{ijk}$ factors through the projection 
\[
\pi^Z_i\otimes \pi^B_j\otimes \pi^B_k : Z\otimes^{\h}_A B\otimes^{\h}_A B \to Z_i\otimes^{\h}_A A\restrict_{F_j} \otimes^{\h}_A A\restrict_{F_k} \subset Z\otimes^{\h}_A B \otimes^{\h}_A B
\]
so that $\mu^Z_{ijk} = \mu^Z_{ijk}\circ (\pi^Z_i\otimes\pi^B_j\otimes \pi^B_k)$. Now, for each $(i,j)\in I^2$ define 
\[
\delta_{ij}:Z\to Z\otimes^{\h}_A B,\qquad \delta_{ij}(z)\coloneqq \phi_{ij}\circ \zeta_{ij}(z_j\restrict_{F_{ij}}).
\] 
Then we have $\delta_{ij} = (\pi^Z_i\otimes\pi^B_j)\circ \delta_{ij} \circ \pi^Z_j$, from which it follows that
\[
\mu_{ijk}^Z \circ (\delta_{pq}\otimes \id_B) \circ \delta_{rs} = 0 \quad \text{unless}\quad p=i,\ q=r=j,\ s=k.
\]
Since $\delta = \sum_{j\in I}\sum_{i\in I}\delta_{ij}$, it follows that
\[
\mu_{ijk}^Z\circ (\delta\otimes \id_B)\circ \delta =  \mu_{ijk}^Z \circ (\delta_{ij}\otimes \id_B)\circ \delta_{jk}.
\]

Fix $z\in Z$, choose $w_j\in Z_j$ with $w_j\restrict_{F_{jk}}=\zeta_{jk}(z_k\restrict_{F_{jk}})$, and then choose $w_i\in Z_i$ with $w_i\restrict_{F_{ij}}=\zeta_{ij}(w_j\restrict_{F_{ij}})$. We then have $\delta_{jk}(z) = w_j\otimes 1\restrict_{F_k}$ and $\delta_{ij}(w_j) = w_i\otimes 1\restrict_{F_j}$, and so
\begin{equation}\label{eq:delta-b-pf1}
\begin{aligned}
&\mu_{ijk}^Z\circ (\delta \otimes \id_B)\circ \delta (z)  = \mu_{ijk}^Z\circ (\delta_{ij}\otimes \id_B) \circ \delta_{jk}(z) \\
& = \mu_{ijk}^Z(w_i\otimes 1\restrict_{F_j} \otimes 1\restrict_{F_k})  = w_i\restrict_{F_{ijk}} = \zeta_{ij}(w_j\restrict_{F_{ij}})\restrict_{F_{ijk}}  = \zeta_{ij}\restrict_{F_{ijk}} ( w_j\restrict_{F_{ijk}}) \\ 
& = \zeta_{ij}\restrict_{F_{ijk}} \circ \zeta_{jk}\restrict_{F_{ijk}} (z_k\restrict_{F_{ijk}})  = \zeta_{ik}\restrict_{F_{ijk}}(z_k\restrict_{F_{ijk}}).
\end{aligned}
\end{equation}
On the other hand, we have $(\pi^Z_i\otimes \id_B)\circ \eta^Z = \eta^Z\circ \pi^Z_i$, and so writing $\mu_{ijk}^Z = \mu_{ijk}^Z\circ (\pi^Z_i\otimes \id_B \otimes \pi^B_k)$ gives
\[
\begin{aligned}
\mu_{ijk}^Z\circ (\eta^Z\otimes \id_B)\circ \delta & = \mu_{ijk}^Z \circ (\eta^Z\otimes \id_B) \circ (\pi^Z_i\otimes \pi^B_k)\circ \delta \\ 
& = \mu_{ijk}^Z\circ (\eta^Z\otimes \id_B)\circ \delta_{ik}.
\end{aligned}
\]
Given $z\in Z$ we choose $x_i\in Z_i$ with $x_i\restrict_{F_{ik}}=\zeta_{ik}(z_k\restrict_{F_{ik}})$, so that $\delta_{ik}(z) = x_i\otimes 1\restrict_{F_k}$. Now 
\[
\begin{aligned}
\mu_{ijk}^Z\circ (\eta^Z\otimes \id_B)\circ \delta (z) & = \mu_{ijk}^Z\circ (\eta^Z\otimes \id_B) (x_i\otimes 1\restrict_{F_k}) = \mu_{ijk}^Z(x_i\otimes 1\otimes 1\restrict_{F_k}) \\
& = x_i\restrict_{F_{ijk}} = \zeta_{ik}\restrict_{F_{ijk}}(z_k\restrict_{F_{ijk}})
\end{aligned}
\]
which we showed in \eqref{eq:delta-b-pf1} to equal $\mu_{ijk}^Z\circ (\delta\otimes \id_B)\circ \delta(z)$. Since the maps $\mu_{ijk}^Z$ separate points, this concludes the proof of part (b).

For part (c): consider the maps 
\[
\mu^Z_{ij}:Z\otimes^{\h}_A B\to Z_i\restrict_{ij},\qquad z\otimes b\mapsto z_i\restrict_{F_{ij}}\cdot b_j\restrict_{F_{ij}}.
\]
A computation similar to the one used to prove part (b) shows that for each $z\in Z$ we have
\[
\mu^Z_{ij}\circ (\eta^Z-\delta) (z) = z_i\restrict_{F_{ij}} - \zeta_{ij}(z_j\restrict_{ij}).
\]
Consulting the definition of $\functor{G}(Z,\zeta)$ then shows that
\[
\functor{G}(Z,\zeta) = \bigcap_{(i,j)\in I^2} \ker \left( \mu^Z_{ij}\circ (\eta^Z-\delta)\right) = \ker(\eta^Z-\delta)
\]
where the second equality holds because the maps $\mu_{ij}^Z$ separate the points of $Z\otimes^{\h}_A B$ (Lemma \ref{lem:mu} once again.)
\end{proof}

\begin{lemma}\label{lem:kernels}
We have $\Glue(Z,\zeta)\otimes^{\h}_A B = \ker \left( (\eta^Z-\delta)\otimes \id_B\right)\subset Z\otimes^{\h}_A B$.
\end{lemma}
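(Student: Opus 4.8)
I would prove the two inclusions between the subspaces $\functor{G}(Z,\zeta)\otimes^{\h}_A B$ and $\ker\big((\eta^Z-\delta)\otimes\id_B\big)$ of $Z\otimes^{\h}_A B$ separately; only one is substantial. For the inclusion ``$\subseteq$'': by Lemma \ref{lem:delta-algebra}(c) the closed $A$-submodule $\functor{G}(Z,\zeta)$ of $Z$ is exactly $\ker(\eta^Z-\delta)$, and its inclusion into $Z$ is a complete isometry of operator $A$-modules, so by Theorem \ref{thm:Haagerup}(c) the induced map $\functor{G}(Z,\zeta)\otimes^{\h}_A B\to Z\otimes^{\h}_A B$ is again completely isometric, and we use it to view $\functor{G}(Z,\zeta)\otimes^{\h}_A B$ as a subspace of $Z\otimes^{\h}_A B$. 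Since $\eta^Z-\delta$ vanishes on $\functor{G}(Z,\zeta)$, functoriality of $\otimes^{\h}_A B$ shows that $(\eta^Z-\delta)\otimes\id_B$ vanishes on $\functor{G}(Z,\zeta)\otimes^{\h}_A B$, giving the containment.

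The reverse inclusion is the content of the lemma; it is the companion, for $\functor{G}$-type kernels, of the statement in Lemma \ref{lem:image-eta} (both being instances of results of \cite{Crisp-descent}), and I would reproduce the argument in abridged form. First I would recast the target as a kernel: writing $p\colon Z\onto Z/\functor{G}(Z,\zeta)$ for the quotient map, Theorem \ref{thm:Haagerup}(c) (applied to the complete isometry $\functor{G}(Z,\zeta)\into Z$ together with $p$) identifies $\functor{G}(Z,\zeta)\otimes^{\h}_A B$ with $\ker(p\otimes\id_B)\subseteq Z\otimes^{\h}_A B$. Since $\eta^Z-\delta$ and $p$ have the same kernel, namely $\functor{G}(Z,\zeta)$ (Lemma \ref{lem:delta-algebra}(c)), the map $\eta^Z-\delta$ factors as $\bar{f}\circ p$ for a completely bounded \emph{injective} map $\bar{f}\colon Z/\functor{G}(Z,\zeta)\to Z\otimes^{\h}_A B$. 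Hence it is enough to show that $\bar{f}\otimes\id_B$ is injective, for then $\ker\big((\eta^Z-\delta)\otimes\id_B\big)=\ker\big((\bar{f}\otimes\id_B)\circ(p\otimes\id_B)\big)=\ker(p\otimes\id_B)=\functor{G}(Z,\zeta)\otimes^{\h}_A B$.

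It is here that the weak expectation $\iota\colon B\to A^{\dual\dual}$ of Lemma \ref{lem:A-in-B} enters, as a substitute for the retraction $B\to A$ that one would have in the faithfully-flat-descent setting of \cite[Section 6.2]{BLR}. The mechanism I would use is this: if $g\colon M\to N$ is an injective completely bounded map of operator $A$-modules for which there exists a completely bounded $h\colon N\to M\otimes^{\h}_A A^{\dual\dual}$ with $h\circ g$ equal to the canonical embedding $M\cong M\otimes^{\h}_A A\into M\otimes^{\h}_A A^{\dual\dual}$, then $g\otimes\id_B$ is injective; indeed $(h\otimes\id_B)\circ(g\otimes\id_B)$ is then the $\otimes\id_B$-amplification of that canonical embedding, which is completely isometric because $A\into A^{\dual\dual}$ is a complete isometry of operator $A$-modules (apply Theorem \ref{thm:Haagerup}(c) once to see $M\otimes^{\h}_A A\into M\otimes^{\h}_A A^{\dual\dual}$ is completely isometric, and once more after $\otimes\id_B$). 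This is precisely the analogue, with $A\into A^{\dual\dual}$ in the role of $\eta^{B/A}$, of the injectivity step in the proof of Lemma \ref{lem:image-eta}. Applying it with $g=\bar{f}$, the proof reduces to constructing a completely bounded map $h\colon Z\otimes^{\h}_A B\to\big(Z/\functor{G}(Z,\zeta)\big)\otimes^{\h}_A A^{\dual\dual}$ with $h\circ(\eta^Z-\delta)=(\text{canonical embedding})\circ p$, to be built from $\iota$ using the explicit formulas for $\eta^Z$, $\phi^Z_{ij}$, and $\delta$ (Definition \ref{def:eta}, Definition \ref{def:phi}, Lemma \ref{lem:delta}) and the cocycle identity for the $\zeta_{ij}$.

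The main obstacle is exactly this construction of $h$. The naive guess $h=p\otimes\iota$ does not literally work and must be corrected using the gluing isomorphisms; carrying this out is delicate because $\otimes^{\h}_A B$ does not commute with the $C_0$-direct sum defining $B$, so one cannot reduce to working a single coordinate at a time as one would algebraically. Reconciling the merely ``virtual'' nature of $\iota$ --- a retraction of $B$ onto $A^{\dual\dual}$ rather than onto $A$ --- with the honest conclusion is exactly what the technical machinery of \cite{Crisp-descent} is designed for, and I would import it rather than reprove it in full.
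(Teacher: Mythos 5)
Your reduction is fine as far as it goes: the easy inclusion, the identification $\functor{G}(Z,\zeta)\otimes^{\h}_A B=\ker(p\otimes\id_B)$ via Theorem \ref{thm:Haagerup}(c), the factorisation $\eta^Z-\delta=\bar f\circ p$ with $\bar f$ injective and completely bounded, and the observation that everything comes down to injectivity of $\bar f\otimes\id_B$, are all consistent with the shape of the paper's argument. The genuine gap is that the step carrying all the content --- producing a completely bounded $h\colon Z\otimes^{\h}_A B\to \bigl(Z/\functor{G}(Z,\zeta)\bigr)\otimes^{\h}_A A^{\dual\dual}$ with $h\circ(\eta^Z-\delta)=\kappa\circ p$, where $\kappa$ is the bidual embedding --- is never constructed, and it is not something you can simply ``import'': neither the paper's proof nor \cite{Crisp-descent} produces such a module-level one-sided inverse of $\bar f$ up to the bidual. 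What they produce is weaker and lives on dual spaces: a completely bounded $\sigma\colon Z^{\dual}\to (Z\otimes^{\h}_A B)^{\dual}$, defined by $\langle \sigma(\psi)\,|\,z\otimes b\rangle \coloneqq \langle \psi\,|\,z\cdot\iota(b)\rangle$, which is only a \emph{pseudoinverse} of $\gamma^{\dual}=(\eta^Z-\delta)^{\dual}$, i.e.\ $\gamma^{\dual}\circ\sigma\circ\gamma^{\dual}=\gamma^{\dual}$; by operator-space duality this already shows $\gamma$ is conjugate to a quotient mapping, whence Theorem \ref{thm:Haagerup}(c) gives $(\ker\gamma)\otimes^{\h}_A B=\ker(\gamma\otimes\id_B)$. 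Your requirement $h\circ\bar f=\kappa$ is an honest left inverse (up to bidual) and is strictly stronger than a pseudoinverse: since $\image\bar f$ has no reason to be completely boundedly complemented and the target $\bigl(Z/\functor{G}(Z,\zeta)\bigr)\otimes^{\h}_A A^{\dual\dual}$ is not an injective operator space, there is no evident way to extend $\kappa\circ\bar f^{-1}$ from $\image\bar f$ to all of $Z\otimes^{\h}_A B$. So the proposal reduces the lemma to a statement that is unproved and is not what the cited machinery supplies.

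The repair is to abandon the module-level left inverse and run your ``mechanism'' on duals: verify $(\eta^Z)^{\dual}\circ\sigma=\id_{Z^{\dual}}$ and $\sigma\circ\gamma^{\dual}=(\gamma\otimes\id_B)^{\dual}\circ\tau$ for the map $\tau$ built from $\iota$ in the same way, and then invoke $(\gamma\otimes\id_B)\circ\delta=0$ --- this is part (b) of Lemma \ref{lem:delta-algebra}, which your write-up never uses but which is exactly where the coassociativity of $\delta$ (hence the cocycle condition on $\zeta$) enters --- to conclude $\gamma^{\dual}\circ\sigma\circ\gamma^{\dual}=\gamma^{\dual}$. Your correct diagnosis that the naive $h=p\otimes\iota$ fails (because $(p\otimes\iota)\circ\delta\neq 0$) is precisely the reason the argument is carried out on dual spaces with a pseudoinverse rather than on modules with a left inverse.
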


This is an instance of \cite[Corollary 4.10]{Crisp-descent}.
We shall present here an outline of the argument, referring the reader to \cite[Lemma 4.8, Lemma 4.9, Corollary 4.10]{Crisp-descent} for the details.

\begin{proof}
Let $\gamma=\eta^Z-\delta$. Lemma \ref{lem:delta-algebra} part (c) says that $\functor{G}(Z,\zeta) = \ker \gamma$. If $\gamma$ is conjugate, via a completely bounded isomorphism, to a quotient mapping of operator spaces, then Theorem \ref{thm:Haagerup} part (c) implies that $(\ker\gamma)\otimes^{\h}_A B = \ker \left( \gamma\otimes \id_B\right)$, which is what we are trying to prove.

The basic duality theory of operator spaces (cf.~\cite[Section 1.4]{BLM}) implies  that in order to show that $\gamma$ is conjugate to a quotient mapping, it is enough to show that the dual map $\gamma^{\dual} : (Z\otimes^{\h}_A B)^{\dual}\to Z^\dual$ is conjugate to a quotient mapping. This property in turn follows from the existence of a pseudoinverse to $\gamma^\dual$, i.e., a completely bounded map $\sigma:Z^\dual\to (Z\otimes^{\h}_A B)^\dual$ satisfying $\gamma^\dual \circ \sigma \circ \gamma^\dual = \gamma^\dual$. 

Such a map $\sigma$ is obtained by setting, for each $\psi\in Z^\dual$ and each $z\otimes b\in Z\otimes^{\h}_A B$, 
\[
\left\langle \left. \sigma(\psi) \, \right|\, z\otimes b \right\rangle_{\C} \coloneqq \left\langle \psi\, \left|\, z\cdot\iota(b)\right. \right\rangle_{\C}
\]
where $z\cdot\iota(b)\in Z^{\dual\dual}$ is the element obtained by embedding $z\in Z$ into $Z^{\dual\dual}$ in the usual way, and then multiplying by the element $\iota(b)\in A^{\dual\dual}$; here $\iota:B\to A^{\dual\dual}$ is a weak expectation, as in Lemma \ref{lem:A-in-B}, and $Z^{\dual\dual}$ is an operator $A^{\dual\dual}$-module as explained in \cite[3.8.9]{BLM}. 

To verify the identity $\gamma^{\dual}\circ \sigma \circ \gamma^{\dual}=\gamma^{\dual}$ one notes that 
\begin{equation}\label{eq:hsh}
(\eta^Z)^\dual \circ \sigma = \id_{Z^\dual}\qquad \text{and}\qquad \sigma\circ \gamma^{\dual}=(\gamma\otimes \id_B)^\dual \circ \tau
\end{equation}
where $\tau:(Z\otimes^{\h}_A B)^{\dual} \to (Z\otimes^{\h}_A B\otimes^{\h}_A B)^{\dual}$ is defined by
\[
\left\langle \left. \tau(\psi) \, \right|\, z\otimes b\otimes b' \right\rangle_{\C} \coloneqq \left\langle \psi\, \left|\, (z\otimes b)\cdot \iota(b')\right.\right\rangle_{\C}.
\]
The identities \eqref{eq:hsh}, along with the fact that $(\gamma\otimes \id_B)\circ \delta=0$ (Lemma \ref{lem:delta-algebra} part (b)), then yield
\[
\gamma^\dual \circ \sigma \circ \gamma^\dual = (\eta^Z)^\dual \circ \sigma \circ \gamma^\dual - \delta^\dual \circ \sigma \circ \gamma^\dual = \gamma^\dual - \delta^\dual \circ (\gamma\otimes \id_B)^\dual \circ \tau=\gamma^\dual
\]
as required.
\end{proof}

\subsection{Proof of Theorem \ref{thm:main}, concluded}\label{subsec:pf-end}
Let $(Z,\zeta)$ be an object of $\Glue(A,\mathfrak F)$, and let $\delta:Z\to Z\otimes^{\h}_A B$ be the map defined in Lemma \ref{lem:delta}. Lemma \ref{lem:delta-algebra} part (b) ensures that the image of $\delta$ is contained in the kernel of $(\eta^Z-\delta)\otimes \id_B$, and Lemma \ref{lem:kernels} identifies the latter kernel with $\functor{G}(Z,\zeta)\otimes^{\h}_A B$. Since $\epsilon^Z\circ \delta$ is the identity on $Z$, by part (a) of Lemma \ref{lem:delta-algebra}, we conclude that the map $\epsilon^Z: \functor{G}(Z,\zeta)\otimes^{\h}_A B\to Z$ is surjective. By Lemma \ref{lem:epsilon} part (c), this implies that there is a unitary natural isomorphism $\functor{P}\functor{G}\cong \id_{\Glue(A,\mathfrak F)}$. We showed in Section \ref{subsec:pf-begin} that there is a unitary natural isomorphism $\functor{G}\functor{P}\cong \id_{\CMod(A)}$, and therefore the functor $\functor{P}:\CMod(A)\to \Glue(A,\mathfrak F)$ is a unitary equivalence.\hfill\qed

\subsection{Gluing data and Hilbert comodules}\label{subsec:comodules}

In this section we briefly explain how Theorem \ref{thm:main} relates to the general machinery of \cite{Crisp-descent}; see that paper for explanations of the undefined terms.

\begin{theorem}\label{thm:comodules}
For each object $(Z,\zeta)$ of $\Glue(A,\mathfrak F)$ let $\widetilde{\delta}:Z\to Z\otimes^{\h}_B (B\otimes^{\h}_A B)$ be the composition
\[
Z \xrightarrow{\delta} Z\otimes^{\h}_A B \xrightarrow[\cong]{z\otimes b\mapsto z\otimes 1\otimes b} Z\otimes^{\h}_B B\otimes^{\h}_A B
\]
where $\delta$ is as in Lemma \ref{lem:delta}. The assignment $(Z,\zeta)\mapsto (Z,\widetilde{\delta})$ extends to a unitary equivalence of $C^*$-categories
\[
\functor{D}:\Glue(A,\mathfrak F) \to \CComod(B\otimes^{\h}_A B)
\]
between $\Glue(A,\mathfrak F)$ and the $C^*$-category of Hilbert comodules over the $C^*$-coalgebra $B\otimes^{\h}_A B$, as defined in \cite[Definition 5.1]{Crisp-descent}. The composite
\[
\functor{D}\circ \functor{P}:\CMod(A) \to \CComod(B\otimes^{\h}_A B)
\]
is unitarily isomorphic to the unitary equivalence $\functor{L}:\CMod(A)\xrightarrow{\cong}\CComod(B\otimes^{\h}_A B)$ of \cite[Theorem 5.6]{Crisp-descent}.
\end{theorem}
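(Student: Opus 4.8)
The plan is to deduce the statement from Theorem~\ref{thm:main} and the comonadic descent machinery of \cite{Crisp-descent}, using the map $\delta$ of Lemma~\ref{lem:delta} and its algebraic properties recorded in Lemma~\ref{lem:delta-algebra}. Throughout I would work with $\delta\colon Z\to Z\otimes^{\h}_A B$ rather than with $\widetilde\delta$, via the canonical (Haagerup associativity) identification $Z\otimes^{\h}_B(B\otimes^{\h}_A B)\cong Z\otimes^{\h}_A B$, and likewise for the double tensor products, under which $\widetilde\delta$ corresponds to $\delta$; and I would recall from \cite{Crisp-descent} that $\functor{L}$ sends a Hilbert $A$-module $X$ to the cofree $B\otimes^{\h}_A B$-comodule, with underlying Hilbert $B$-module $X\otimes^{\h}_A B$ and coaction corresponding, under the same identification, to $\eta^X\otimes\id_B\colon X\otimes^{\h}_A B\to X\otimes^{\h}_A B\otimes^{\h}_A B$. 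The first task is to see that $\functor{D}$ is a well-defined $*$-functor. On objects this is immediate: the counit axiom for $(Z,\widetilde\delta)$ is exactly the identity $\epsilon^Z\circ\delta=\id_Z$ of Lemma~\ref{lem:delta-algebra}(a); coassociativity is exactly $(\delta\otimes\id_B)\circ\delta=(\eta^Z\otimes\id_B)\circ\delta$ of Lemma~\ref{lem:delta-algebra}(b); and the regularity required of a coaction in \cite[Definition~5.1]{Crisp-descent} is supplied by Lemma~\ref{lem:delta}, which says $\delta$ is an appropriately isometric map of operator $B$-modules.

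For morphisms, I would take $\alpha=(\alpha_i)_{i\in I}\colon(Z,\zeta)\to(W,\omega)$ in $\Glue(A,\mathfrak F)$---equivalently an adjointable map $Z\to W$ of Hilbert $B$-modules compatible with $\zeta$ and $\omega$---and check that it is a comodule morphism, i.e.\ $\delta^W\circ\alpha=(\alpha\otimes\id_B)\circ\delta^Z$, by working term by term in the double sum that defines $\delta$. The naturality identity $(\alpha\otimes\id_B)\circ\phi^Z_{ij}=\phi^W_{ij}\circ\alpha_i\restrict_{F_{ij}}$ is immediate from the explicit formula $\phi^Z_{ij}(z_i\restrict_{F_{ij}})=z_i\otimes 1\restrict_{F_j}$ of Definition~\ref{def:phi}, and combining it with the defining relation $\alpha_i\restrict_{F_{ij}}\circ\zeta_{ij}=\omega_{ij}\circ\alpha_j\restrict_{F_{ij}}$ matches the $(i,j)$-summands on the two sides; boundedness and $B$-linearity of $\alpha\otimes\id_B$ then justify passing to the limit. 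Setting $\functor{D}\alpha\coloneqq\alpha$ yields a functor, and it preserves $*$-operations because on both sides the $*$-operation is just Hilbert $B$-module adjunction.

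Next I would compare $\functor{D}$ with the composite $\functor{L}\functor{G}$. Fix an object $(Z,\zeta)$ and put $Y\coloneqq\functor{G}(Z,\zeta)\subseteq Z$. From the proof of Theorem~\ref{thm:main}---namely Lemma~\ref{lem:epsilon}(a) together with the surjectivity established in Section~\ref{subsec:pf-end}---the multiplication map $\epsilon^Z\colon Y\otimes^{\h}_A B\to Z$ is a unitary isomorphism of Hilbert $B$-modules. I claim it is moreover an isomorphism of comodules from $\functor{L}(Y)=\functor{L}\functor{G}(Z,\zeta)$ to $\functor{D}(Z,\zeta)$, i.e.\ $\delta^Z\circ\epsilon^Z=(\epsilon^Z\otimes\id_B)\circ(\eta^Y\otimes\id_B)$. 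On an elementary tensor $y\otimes b$ with $y\in Y$ and $b\in B$ this reduces to $\delta^Z(yb)=y\otimes b$ in $Z\otimes^{\h}_A B$, which holds because $\delta^Z$ is $B$-linear and $\delta^Z(y)=\eta^Z(y)=y\otimes 1$; the last equality is Lemma~\ref{lem:delta-algebra}(c), since $y\in\functor{G}(Z,\zeta)=\ker(\eta^Z-\delta^Z)$. As $\epsilon^Z$ is natural with respect to all bounded $B$-module maps, hence in particular with respect to all morphisms of $\Glue(A,\mathfrak F)$, the isomorphisms $\epsilon^Z$ assemble into a unitary natural isomorphism $\functor{L}\functor{G}\xrightarrow{\ \cong\ }\functor{D}$.

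The theorem then follows formally. By Theorem~\ref{thm:main} the functor $\functor{P}$ is a unitary equivalence, so its quasi-inverse $\functor{G}$ is one as well, and $\functor{L}$ is a unitary equivalence by \cite[Theorem~5.6]{Crisp-descent}; hence $\functor{L}\functor{G}$, and with it the isomorphic functor $\functor{D}$, is a unitary equivalence. Composing the isomorphism $\functor{L}\functor{G}\cong\functor{D}$ with the unitary natural isomorphism $\functor{G}\functor{P}\cong\id_{\CMod(A)}$ from Section~\ref{subsec:pf-begin} gives $\functor{D}\functor{P}\cong\functor{L}\functor{G}\functor{P}\cong\functor{L}$, which is the remaining assertion. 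I expect the main obstacle to be bookkeeping rather than mathematics: one has to align the conventions of \cite{Crisp-descent}---the precise shape of the comodule axioms, of the coalgebra $B\otimes^{\h}_A B$, and of the functor $\functor{L}$---with the notation of the present paper, and in particular confirm that the regularity condition imposed on coactions there is exactly what Lemma~\ref{lem:delta} provides for $\widetilde\delta$.
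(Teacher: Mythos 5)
Your proof is correct and follows essentially the same route as the paper: the comodule axioms come from Lemma \ref{lem:delta-algebra}(a)--(b) together with the Hermitian condition (which, as you hedge at the end, rests on the inner-product identity established in the proof of Lemma \ref{lem:delta} via the unitarity of the $\zeta_{ij}$, not merely on the bare complete-isometry statement), and the equivalence and the identification $\functor{D}\circ\functor{P}\cong\functor{L}$ are then deduced formally from Theorem \ref{thm:main} and \cite[Theorem 5.6]{Crisp-descent}. The only difference in packaging is that the paper uses Lemma \ref{lem:delta-algebra}(c) to show that $\functor{R}\circ\functor{D}=\functor{G}$ on the nose, where $\functor{R}(Z,\widetilde{\delta})=\ker(\eta^Z-\delta)$ is the canonical inverse of $\functor{L}$, whereas you exhibit the natural unitary isomorphism $\functor{L}\circ\functor{G}\cong\functor{D}$ directly through the maps $\epsilon^Z$; these are two forms of the same observation.
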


\begin{proof}
The fact that $(Z,\widetilde{\delta})$ satisfies the comodule axioms follows from parts (a) and (b) of Lemma \ref{lem:delta-algebra}, plus an additional verification of the Hermitian condition, which is a  consequence of the unitarity of the gluing maps $\zeta_{ij}$. Another straightforward computation shows that every morphism of gluing data is also a morphism of comodules, so we obtain a $*$-functor $\functor{D}$. The canonical inverse $\functor{R}:\CComod(B\otimes^{\h}_A B)\to \CMod(A)$ of the equivalence $\functor{L}$ is given by $(Z,\widetilde{\delta})\mapsto \ker(\eta^Z-\delta)$, and so part (c) of Lemma \ref{lem:delta-algebra} implies that the composite
\[
\Glue(A,\mathfrak F) \xrightarrow{\functor{D}} \CComod(B\otimes^{\h}_A B) \xrightarrow{\functor{R}} \CMod(A)
\]
is equal to the gluing functor $\functor{G}$. Since $\functor{G}$ and $\functor{R}$ are equivalences, $\functor{D}$ is likewise; and we have
\[
\functor{D}\circ \functor{P}\cong \functor{L}\circ \functor{R}\circ \functor{D}\circ \functor{P} \cong \functor{L}\circ \functor{G}\circ \functor{P}\cong \functor{L}.\qedhere
\]
\end{proof}

\begin{remark}
We have proved Theorem \ref{thm:comodules} as a consequence of Theorem \ref{thm:main}. If we had taken the machinery of \cite{Crisp-descent} for granted then we could have proceeded in a slightly more economical way by first proving Theorem \ref{thm:comodules} (still relying on Lemma \ref{lem:delta}), and then deducing Theorem \ref{thm:main} from \cite[Theorem 5.6]{Crisp-descent}. 
\end{remark}

\section{Gluing Morita equivalences and the Picard group}\label{sec:Picard}

In this section we apply Theorem \ref{thm:main} to obtain a generalisation of \cite[Theorem 2.1]{Raeburn-Picard}. We first recall some terminology related to Morita equivalence, following \cite{RW}.

Throughout this section we fix a topological space $T$, and suppose that $A$ and $A'$ are $C^*$-algebras whose primitive ideal spaces have been identified with $T$ via fixed choices of homeomorphisms. 

An \emph{$(A',A)$-equivalence bimodule} is an $A'$-$A$ bimodule $M$ that is simultaneously a right Hilbert $A$-module and a left Hilbert $A'$ module, with the two inner products satisfying 
\[
{}_{A'}\langle m a | n\rangle = {}_{A'}\langle m | na^*\rangle,\quad \langle a'm |n\rangle_A= \langle m |a'^* n\rangle_A,\quad\text{and}\quad {}_{A'}\langle m | n\rangle p = m \langle n| p\rangle_A
\]
for all $m,n,p\in M$, $a\in A$, and $a'\in A'$; and also satisfying
\begin{equation}\label{eq:full}
A'=\overline{\lspan}\left\{ \left. {}_{A'}\langle m|n\rangle\ \right|\ m,n\in M \right\}\  \ \text{and}\ \ A=\overline{\lspan}\left\{\left. \langle m|n\rangle_A\ \right|\ m,n\in M \right\}.
\end{equation}
An isomorphism of $(A',A)$-equivalence bimodules is a bimodule isomorphism that preserves both of the inner products---or, equivalently (thanks to \cite[Theorem 3.5]{Lance} and the fact that the norms induced by the two inner products are equal), an isometric bimodule isomorphism.  

If $M$ is such a  bimodule then for each ideal $J$ of $A$ there is a unique ideal $J'$ of $A'$ with the property that $MJ=J'M$. This correspondence gives rise to a homeomorphism 
\[
h_M:T \xrightarrow{\cong} \Prim A \xrightarrow{J\mapsto J'} \Prim A' \xrightarrow{\cong} T,
\] 
and we call $M$ an  \emph{$(A',A,T)$-equivalence bimodule} if   $h_M=\id_T$.

If $M$ is an $(A',A,T)$-equivalence bimodule, and if $F$ is a closed subset of $T$ corresponding to ideals $J\subset A$ and $J'\subset A'$, then  the restriction $M\restrict_{F}=M/(MJ)=M/(J'M)$ is an $(A'\restrict_F, A\restrict_F,F)$-equivalence bimodule.

We let $\Eq(A',A,T)$ be the category whose objects are $(A',A,T)$-equivalence bimodules and whose morphisms are isomorphisms of equivalence bimodules.

\begin{definition}
Let $\mathfrak F=(F_i)_{i\in I}$ be a locally finite closed cover of $T$. An \emph{$(A',A,\mathfrak F)$-equivalence bimodule} is a pair $(M,\mu)$ consisting of a collection $M=(M_i)_{i\in I}$ of $(A'\restrict_{F_i}, A\restrict_{F_i}, F_i$)-equivalence bimodules, and a collection $\mu=(\mu_{ij})_{(i,j)\in I^2}$ of isomorphisms of $(A'\restrict_{F_ij}, A\restrict_{F_{ij}})$-equivalence bimodules $\mu_{ij}: M_j\restrict_{F_{ij}}\to M_i\restrict_{F_{ij}}$ such that for all $(i,j,k)\in I^3$ we have $\mu_{ij}\restrict_{F_{ijk}} \circ \mu_{jk}\restrict_{F_{ijk}} = \mu_{ik}\restrict_{F_{ijk}}$. An isomorphism $(M,\mu)\to (N,\nu)$ of such bimodules is a collection $(\alpha_i)_{i\in I}$ of isomorphisms of $(A'\restrict_{F_i}, A\restrict_{F_i})$-equivalence bimodules $\alpha_i: M_i\to N_i$ satisfying $\alpha_j\restrict_{F_{ij}}\circ \mu_{ij} = \nu_{ij}\circ \alpha_i\restrict_{F_{ij}}$ for all $(i,j)\in I^2$. We let $\Eq(A',A,\mathfrak F)$ denote the category of $(A',A,\mathfrak F)$-equivalence bimodules and isomorphisms.
\end{definition}

\begin{corollary}\label{cor:Morita}
Let $\mathfrak F=(F_i)_{i\in I}$ be a locally finite closed cover of $T$. The functor 
\[
\functor{P}:\Eq(A',A,T)\to \Eq(A',A,\mathfrak F)
\]
defined on objects by
\[
\functor{P} M \coloneqq \left( \left(M\restrict_{F_i}\right)_{i\in I}, \left(\kappa^M_{ij}\right)_{(i,j)\in I}\right)
\]
and on morphisms by $\functor{P}\alpha \coloneqq \left( \alpha\restrict_{F_i}\right)_{i\in I}$ is an equivalence of categories, with inverse $\functor{G}:\Eq(A',A,\mathfrak F)\to \Eq(A',A,T)$ defined on objects by
\[
\functor{G}(M,\mu)\coloneqq \left\{ (m_i)_{i\in I}\in \bigoplus_{i\in I} M_i\ \left|\ m_i\restrict_{F_{ij}} = \mu_{ij}( m_j\restrict_{F_{ij}})\text{ for all }(i,j)\in I^2 \right. \right\}
\]
and on morphisms by $\functor{G}(\alpha_i)_{i\in I} \coloneqq \bigoplus_{i\in I}\alpha_i$.
\end{corollary}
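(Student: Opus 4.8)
The plan is to deduce Corollary~\ref{cor:Morita} from Theorem~\ref{thm:main} by treating an $(A',A,T)$-equivalence bimodule as a full right Hilbert $A$-module carrying the extra data of a left $A'$-module structure and an $A'$-valued inner product, all of which restrict and glue componentwise. Forgetting the left structure defines a forgetful functor $\Eq(A',A,T)\to\CMod(A)$, and on underlying right Hilbert $A$-modules $\functor{P}M$ is precisely the gluing datum of Definition~\ref{def:P} while $\functor{G}(M,\mu)$ is precisely the Hilbert $A$-module of Definition~\ref{def:G} applied to the gluing datum $((M_i)_{i\in I},(\mu_{ij})_{(i,j)\in I^2})$ --- here each $\mu_{ij}$, being an isometric isomorphism of equivalence bimodules, is in particular a unitary isomorphism of right Hilbert $A\restrict_{F_{ij}}$-modules, so this is a legitimate object of $\Glue(A,\mathfrak F)$. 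Thus Theorem~\ref{thm:main}, together with Lemma~\ref{lem:GZ-over-A}, already controls the underlying right-module picture; all that remains is to carry along the left $A'$-action and check the fullness conditions.

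\textbf{Fullness is local.}
The one genuinely new ingredient I would record is that fullness is a local condition for the cover $\mathfrak F$: a right Hilbert $A$-module $X$ is full over $A$ if and only if each $X\restrict_{F_i}$ is full over $A\restrict_{F_i}$, and likewise for left Hilbert $A'$-modules. Indeed, the closed ideal $I_0=\overline{\lspan}\langle X\,|\,X\rangle_A$ has image $\overline{\lspan}\langle X\restrict_{F_i}\,|\,X\restrict_{F_i}\rangle_{A\restrict_{F_i}}$ in each quotient $A\restrict_{F_i}$, so fullness of every $X\restrict_{F_i}$ says that $I_0$ is contained in no primitive ideal belonging to any $F_i$; since the $F_i$ cover $\Prim A$ this forces $I_0=A$. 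In the same spirit, a self-map of $T$ restricting to the identity on each member of a closed cover is the identity, and the Rieffel correspondence is compatible with restriction to closed subsets, so for an $(A',A)$-equivalence bimodule $M$ the condition $h_M=\id_T$ is equivalent to $h_{M\restrict_{F_i}}=\id_{F_i}$ for all $i$.

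\textbf{$\functor{G}$ lands in $\Eq(A',A,T)$, and $\functor{P}\functor{G}\cong\id$.}
For $(M,\mu)\in\Eq(A',A,\mathfrak F)$ the space $\functor{G}(M,\mu)\subseteq\bigoplus_{i\in I}M_i$ is a right Hilbert $A$-module by Lemma~\ref{lem:GZ-over-A}. Restricting the left $\bigl(\bigoplus_{i\in I}A'\restrict_{F_i}\bigr)$-action and inner product on $\bigoplus_{i\in I}M_i$ to the subalgebra $A'\subseteq\bigoplus_{i\in I}A'\restrict_{F_i}$ of Example~\ref{example:eta-A} (applied to $A'$ in place of $A$), one checks --- using the defining relations $m_i\restrict_{F_{ij}}=\mu_{ij}(m_j\restrict_{F_{ij}})$ together with the fact that each $\mu_{ij}$ preserves the left module structure and the left inner product --- that $\functor{G}(M,\mu)$ is stable under $A'$ and that its left inner products lie in $A'$; the axioms relating the two inner products hold because they hold in each $M_i$. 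Theorem~\ref{thm:main} moreover furnishes, for each $i$, a unitary $\epsilon_i\colon\functor{G}(M,\mu)\restrict_{F_i}\xrightarrow{\cong}M_i$ of right Hilbert $A\restrict_{F_i}$-modules (the maps of Lemma~\ref{lem:epsilon}), and a short computation with their explicit formula shows that $\epsilon_i$ is also left-$A'\restrict_{F_i}$-linear and preserves left inner products. Hence $\functor{G}(M,\mu)\restrict_{F_i}\cong M_i$ as $(A'\restrict_{F_i},A\restrict_{F_i})$-equivalence bimodules; feeding this into the two local criteria of the previous paragraph shows that $\functor{G}(M,\mu)$ is full over $A$ and over $A'$ and has $h=\id_T$, so it is an $(A',A,T)$-equivalence bimodule, and the family $(\epsilon_i)_{i\in I}$ assembles (by Lemma~\ref{lem:epsilon}(b)) into an isomorphism $\functor{P}\functor{G}(M,\mu)\cong(M,\mu)$ in $\Eq(A',A,\mathfrak F)$.

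\textbf{$\functor{P}$ lands in $\Eq(A',A,\mathfrak F)$, $\functor{G}\functor{P}\cong\id$, and conclusion.}
Conversely, $\functor{P}M=((M\restrict_{F_i})_{i\in I},(\kappa^M_{ij})_{(i,j)\in I^2})$ genuinely lies in $\Eq(A',A,\mathfrak F)$: each $M\restrict_{F_i}$ is an $(A'\restrict_{F_i},A\restrict_{F_i},F_i)$-equivalence bimodule (as recalled just before the corollary), each $\kappa^M_{ij}$ is visibly an isomorphism of equivalence bimodules, and the cocycle identity is immediate. The map $\Phi^M\colon M\to\functor{G}\functor{P}M$, $m\mapsto(m\restrict_{F_i})_{i\in I}$, is a unitary of right Hilbert $A$-modules by Lemma~\ref{lem:gluing-elements} and is patently $A'$-linear and left-inner-product preserving, so it is an isomorphism in $\Eq(A',A,T)$, natural in $M$. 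The functoriality of $\functor{P}$ and $\functor{G}$ on isomorphisms, and the naturality of the isomorphism $\functor{P}\functor{G}\cong\id$, are routine componentwise verifications along the lines of Lemma~\ref{lem:GZ-morphisms} and Section~\ref{subsec:pf-begin}. This exhibits $\functor{P}$ and $\functor{G}$ as mutually inverse equivalences. The crux of the argument is the single observation that fullness --- and with it the property of being an equivalence bimodule and the condition $h=\id$ --- can be detected locally over the closed cover $\mathfrak F$; granting that, Corollary~\ref{cor:Morita} is Theorem~\ref{thm:main} with the left $A'$-structure carried along for the ride.
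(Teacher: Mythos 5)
Your proposal is correct and takes essentially the same route as the paper: reduce to Theorem \ref{thm:main} for the underlying right Hilbert $A$-modules, carry the left $A'$-structure along through the explicit maps $\Phi^M$ and $\epsilon^M$, and verify fullness and $h=\id_T$ locally over the cover using the componentwise isomorphisms $\functor{G}(M,\mu)\restrict_{F_i}\cong M_i$. The only (cosmetic) divergence is the fullness check, where you isolate a ``fullness is detected in the quotients $A\restrict_{F_i}$'' lemma, while the paper argues that a proper fullness ideal of $A$ would generate a proper ideal of $B=\bigoplus_i A\restrict_{F_i}$, contradicting fullness of $\bigoplus_i M_i$ over $B$ via $\functor{G}(M,\mu)\otimes^*_A B\cong M$; both arguments rest on the same observation that a proper ideal is contained in a primitive ideal lying in some $F_i$.
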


\begin{proof}
We noted above  that the restriction of an $(A',A,T)$-equivalence bimodule to a closed subset $F\subseteq T$ is an $(A'\restrict_F, A\restrict_F, F)$-equivalence bimodule; this fact ensures that $\functor{P}$ produces $(A',A,\mathfrak F)$-equivalences from $(A',A,T)$-equivalences.

The fact that $\functor{G}$ produces $(A',A,T)$-equivalences from $(A',A,\mathfrak F)$-equivalences is less obvious; let us prove it now. Fix  an $(A',A,\mathfrak F)$-equivalence bimodule $(M,\mu)$, and write $B=\bigoplus_{i\in I} A\restrict_{F_i}$ and $B'= \bigoplus_{i\in I} A'\restrict_{F_i}$.  Since each $M_i$ is an $(A'\restrict_{F_i}, A\restrict_{F_i})$-equivalence bimodule, the direct sum $M=\bigoplus_{i\in I} M_i$ is a $(B', B)$-equivalence bimodule. 

Notice that if we ignore the left $A'\restrict_{F_i}$-module structures then $(M,\mu)$ is an object of $\Glue(A,\mathfrak F)$, and so  Lemma \ref{lem:GZ-over-A} shows that the $B$-valued inner product on $M$, restricted to the closed subspace $\functor{G}(M,\mu)$, makes the latter into a right Hilbert $A$-module. On the other hand, our assumptions on $(M,\mu)$ are entirely symmetrical with respect to the right Hilbert $A\restrict_{F_i}$-module structures vis-\`a-vis the left Hilbert $A'\restrict_{F_i}$-module structures; and so, using the natural left-handed analogues of Definitions \ref{def:Glue} and \ref{def:G},  we may regard $(M,\mu)$ as a gluing datum for left Hilbert $A'$-modules and apply the left-module version of Lemma \ref{lem:GZ-over-A} to conclude that the $B'$-valued inner product on $M$ makes $\functor{G}(M,\mu)$ into a left Hilbert $A'$-module. 
The identities $\langle a'm|n\rangle_A = \langle m|a'^*n\rangle_A$, ${}_{A'}\langle ma |n\rangle = {}_{A'}\langle m|na^*\rangle$, and ${}_{A'}\langle m|n\rangle p = m\langle n|p\rangle_A$ in $\functor{G}(M,\mu)$ all follow immediately from the corresponding identities in the $(B',B)$-equivalence bimodule $M$, so to show that $\functor{G}(M,\mu)$ is an $(A',A)$-equivalence bimodule we just need to verify the fullness condition \eqref{eq:full}. By the $A'$-$A$ symmetry noted above, it will suffice to consider the $A$-valued inner product. 

If
\(
J\coloneqq \overline{\lspan}\left\{ \left. \langle m|n\rangle_A\in A\ \right|\ m,n\in \functor{G}(M,\mu)\right\}
\)
is a proper ideal of $A$ then the ideal $K$ of $B$ generated by $J$ is a proper ideal of $B$: indeed, let $\rho$ be an irreducible representation of $A/J$, let $F_i$ be an element of the covering $\mathfrak F$ having $\ker\rho\in F_i$, and note that  $\rho\circ \pi^B_i$ is a nonzero map on $B$ vanishing on $K$. Now, in the course of proving Theorem \ref{thm:main} we showed that $\functor{G}(M,\mu)\otimes^*_A B \cong M$ as Hilbert $B$-modules, and it is clear from the definition of the $B$-valued inner product on $\functor{G}(M,\mu)\otimes^*_{A} B$ that this inner product takes values in $K$. Since $M$ is a $(B',B)$-equivalence bimodule the range of its $B$-valued inner product spans a dense subspace of $B$, so we must have $K=B$, and therefore $J=A$. 

The last thing to show is that the map $h_{\functor{G}(M,\mu)}: T\to T$ induced by the equivalence bimodule $\functor{G}(M,\mu)$ is the identity. For each $i\in I$ the restriction  of $h_{\functor{G}(M,\mu)}$ to the subset $F_i$ is equal to the map $h_{\functor{G}(M,\mu)\restrict_{F_i}}$ induced by the $(A'\restrict_{F_i}, A\restrict_{F_i})$-equivalence bimodule $\functor{G}(M,\mu)$ (cf.~\cite[Corollary 3.33(b)]{RW}). Our proof of Theorem \ref{thm:main} showed that the map
\[
\epsilon^M: \functor{G}(M,\mu)\otimes^{\h}_A B \xrightarrow{m\otimes b\mapsto mb} M
\]
is a unitary isomorphism of Hilbert $B$-modules, and so it restricts for each $i\in I$ to a unitary isomorphism of Hilbert $A\restrict_{F_i}$-modules $\functor{G}(M,\mu)\restrict_{F_i} \cong M_i$. Since $\epsilon^M$ is obviously $A'$-linear, the latter isomorphism is an isomorphism of $(A'\restrict_{F_i},A\restrict_{F_i})$-equivalence bimodules, and so $h_{\functor{G}(M,\mu)\restrict_{F_i}} = h_{M_i}$ as maps $F_i\to F_i$. We assumed that $M_i$ is an $(A'\restrict_{F_i}, A\restrict_{F_i}, F_i)$-equivalence bimodule, so $h_{M_i}=\id_{F_i}$. Thus the map $h_{\functor{G}(M,\mu)}$ restricts to the identity map on each $F_i$, and since these sets cover $T$ we conclude that $h_{\functor{G}(M,\mu)}=\id_T$.

We have now shown that  $\functor{G}$ does indeed send $\Eq(A',A,\mathfrak F)$ into $\Eq(A',A,T)$. It is easy to see that $\functor{P}$ and $\functor{G}$ are functors, and so we are left to show that they are inverses. 

For each object $M$ of $\Eq(A',A,T)$, disregarding the left $A'$-module structure, Theorem \ref{thm:main} (and its proof) yields a natural unitary isomorphism of right Hilbert $A$-modules $\Phi^M: M\xrightarrow{\cong} \functor{G}\functor{P}M$. Since the left $A'$-module structure comes from a $*$-homomorphism $A'\to \Hom_{\CMod(A)}(M,M)$, the naturality of $\Phi^M$ ensures that this isomorphism is also an isomorphism of left $A'$-modules, and so $\functor{G}\functor{P}\cong \id$ on the category $\Eq(A',A,T)$. An analogous argument shows that for each object $(M,\mu)$ of $\Eq(A',A,\mathfrak F)$ the natural unitary isomorphism 
\[
\epsilon^M:\functor{P}\functor{G}(M,\mu) \xrightarrow{\cong_{\Glue(A,\mathfrak F)} } (M,\mu)
\]
 of Theorem \ref{thm:main} is in fact an isomorphism in $\Eq(A',A,\mathfrak F)$, and we conclude that $\functor{P}$ and $\functor{G}$ are mutually inverse equivalences.
\end{proof}

\begin{definition}
Let $A$ be a $C^*$-algebra with $\Prim A\cong T$. The \emph{Picard group} $\Pic_T A$ is the group of isomorphism classes of $(A,A,T)$-equivalence bimodules, with the group operation being the tensor product $[M]\cdot [N] \coloneqq \left[ M\otimes^*_A N\right]$.
\end{definition}

\begin{corollary}\label{cor:Picard}
Let $A$ and $A'$ be $C^*$-algebras with $\Prim A\cong \Prim A'\cong T$. Suppose that there exist a locally finite closed covering $\mathfrak F=(F_i)_{i\in I}$ of $T$, a collection $(N_i)_{i\in I}$ of $(A'\restrict_{F_i}, A\restrict_{F_i}, F_i)$-equivalence bimodules, and a collection $(\nu_{ij})_{(i,j)\in I^2}$ of unitary isomorphisms of $(A'\restrict_{F_{ij}}, A\restrict_{F_{ij}})$-bimodules $\nu_{ij}: N_j\restrict_{F_{ij}}\to N_i\restrict_{F_{ij}}$. Then $\Pic_T A\cong \Pic_T A'$.
\end{corollary}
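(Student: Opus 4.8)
The plan is to reduce Corollary~\ref{cor:Picard} to Corollary~\ref{cor:Morita} by showing that ``slotwise conjugation'' by the data $(N_i,\nu_{ij})$ defines a monoidal equivalence
\[
\functor{C}\colon\Eq(A,A,\mathfrak F)\longrightarrow\Eq(A',A',\mathfrak F).
\]
Write $\widetilde{N_i}$ for the conjugate $(A\restrict_{F_i},A'\restrict_{F_i},F_i)$-equivalence bimodule, so that the inner products give canonical isomorphisms $N_i\otimes^*_{A\restrict_{F_i}}\widetilde{N_i}\cong A'\restrict_{F_i}$ and $\widetilde{N_i}\otimes^*_{A'\restrict_{F_i}}N_i\cong A\restrict_{F_i}$. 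On objects I would set
\[
\functor{C}(Z,\zeta)\coloneqq\Bigl(\bigl(N_i\otimes^*_{A\restrict_{F_i}}Z_i\otimes^*_{A\restrict_{F_i}}\widetilde{N_i}\bigr)_{i\in I},\ \bigl(\nu_{ij}\otimes\zeta_{ij}\otimes\widetilde{\nu_{ij}}\bigr)_{(i,j)\in I^2}\Bigr),
\]
suppressing the canonical identification of $(N_j\otimes Z_j\otimes\widetilde{N_j})\restrict_{F_{ij}}$ with $N_j\restrict_{F_{ij}}\otimes Z_j\restrict_{F_{ij}}\otimes\widetilde{N_j}\restrict_{F_{ij}}$ (these come from the standard compatibility of internal tensor products with restriction to closed subsets; cf.~\cite{RW}), and on morphisms $\functor{C}(\alpha_i)_i\coloneqq(\id_{N_i}\otimes\alpha_i\otimes\id_{\widetilde{N_i}})_i$. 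Each slot $N_i\otimes Z_i\otimes\widetilde{N_i}$ is an equivalence bimodule over $(A'\restrict_{F_i},A'\restrict_{F_i})$, and since the homeomorphism of $F_i$ it induces is the composite of those induced by $N_i$, $Z_i$ and $\widetilde{N_i}$---all three the identity---it is in fact an $(A'\restrict_{F_i},A'\restrict_{F_i},F_i)$-equivalence bimodule. The only substantive point in the construction of $\functor{C}$ is that the conjugated transition maps satisfy the cocycle identity on triple overlaps, even though the $\nu_{ij}$ are not assumed to.

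I expect this cocycle verification to be the main obstacle. For $(i,j,k)\in I^3$ put $\omega_{ijk}\coloneqq\nu_{ik}\restrict_{F_{ijk}}^{-1}\circ\nu_{ij}\restrict_{F_{ijk}}\circ\nu_{jk}\restrict_{F_{ijk}}$; this is an automorphism of $N_k\restrict_{F_{ijk}}$ in the category of $(A'\restrict_{F_{ijk}},A\restrict_{F_{ijk}})$-equivalence bimodules, and a standard fact about imprimitivity bimodules says that such an automorphism is right multiplication by a central unitary $c_{ijk}$ in $\Multiplier(A\restrict_{F_{ijk}})$ (cf.~\cite{RW}). Using the cocycle identity for $\zeta$ and functoriality of the tensor product, one gets
\begin{multline*}
(\nu_{ij}\otimes\zeta_{ij}\otimes\widetilde{\nu_{ij}})\restrict_{F_{ijk}}\circ(\nu_{jk}\otimes\zeta_{jk}\otimes\widetilde{\nu_{jk}})\restrict_{F_{ijk}}\\
=(\nu_{ik}\otimes\zeta_{ik}\otimes\widetilde{\nu_{ik}})\restrict_{F_{ijk}}\circ\bigl(\omega_{ijk}\otimes\id_{Z_k\restrict_{F_{ijk}}}\otimes\widetilde{\omega_{ijk}}\bigr),
\end{multline*}
so it suffices to check that $\omega_{ijk}\otimes\id\otimes\widetilde{\omega_{ijk}}$ is the identity of $N_k\otimes Z_k\otimes\widetilde{N_k}\restrict_{F_{ijk}}$. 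Now $\widetilde{\omega_{ijk}}$ is left multiplication by $c_{ijk}^{*}$, so pushing the central multipliers across the balanced tensor products shows that $\omega_{ijk}\otimes\id\otimes\widetilde{\omega_{ijk}}$ sends $n\otimes z\otimes\bar m$ to $n\otimes c_{ijk}\,z\,c_{ijk}^{*}\otimes\bar m$, where $c_{ijk}$ acts on $Z_k\restrict_{F_{ijk}}$ on the left and on the right. Because $Z_k\restrict_{F_{ijk}}$ is an $F_{ijk}$-equivalence bimodule, the homeomorphism of $F_{ijk}$ it induces is the identity, which forces the left and right actions of the centre of $\Multiplier(A\restrict_{F_{ijk}})$ on it to agree; hence $c_{ijk}\,z\,c_{ijk}^{*}=z$ and the cocycle identity holds. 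This is exactly where the hypothesis that all the bimodules in sight act trivially on $T$ is used, and it is what ``Morita equivalent up to a $2$-cocycle on $T$'' amounts to.

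Granting the construction of $\functor{C}$, I would conclude as follows. The conjugation functor $\functor{C}'\colon\Eq(A',A',\mathfrak F)\to\Eq(A,A,\mathfrak F)$ built in the same way from $(\widetilde{N_i},\widetilde{\nu_{ij}})$ is inverse to $\functor{C}$ up to natural isomorphism, via the canonical isomorphisms $\widetilde{N_i}\otimes N_i\cong A\restrict_{F_i}$ and $N_i\otimes\widetilde{N_i}\cong A'\restrict_{F_i}$; here one uses only that $\nu_{ij}$ preserves both inner products, so that $\widetilde{\nu_{ij}}\otimes\nu_{ij}$ and $\nu_{ij}\otimes\widetilde{\nu_{ij}}$ correspond under these identifications to the transition maps of the trivial gluing data on $A\restrict_{F_i}$ and $A'\restrict_{F_i}$ (so no cocycle hypothesis on $\nu$ is needed here). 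Thus $\functor{C}$ is an equivalence. It is monoidal for the slotwise internal tensor product, via $(N_i\otimes Z_i\otimes\widetilde{N_i})\otimes_{A'\restrict_{F_i}}(N_i\otimes Z'_i\otimes\widetilde{N_i})\cong N_i\otimes(Z_i\otimes_{A\restrict_{F_i}}Z'_i)\otimes\widetilde{N_i}$; and the equivalences $\functor{P}\colon\Eq(A,A,T)\to\Eq(A,A,\mathfrak F)$ and $\functor{G}\colon\Eq(A',A',\mathfrak F)\to\Eq(A',A',T)$ of Corollary~\ref{cor:Morita} are monoidal because restriction commutes with internal tensor products, $(M\otimes^*_AM')\restrict_{F_i}\cong M\restrict_{F_i}\otimes^*_{A\restrict_{F_i}}M'\restrict_{F_i}$, compatibly with the maps $\kappa$. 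Composing, $\functor{G}\circ\functor{C}\circ\functor{P}$ is a monoidal equivalence $\Eq(A,A,T)\to\Eq(A',A',T)$ carrying the unit object $A$ to the unit object $A'$; passing to isomorphism classes of objects therefore yields a group isomorphism $\Pic_TA\cong\Pic_TA'$. Apart from the cocycle cancellation above, everything is routine bookkeeping with conjugate bimodules and the inner-product identifications.
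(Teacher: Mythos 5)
Your proposal is correct and is essentially the paper's own argument: you conjugate the gluing data slotwise by $(N_i,\nu_{ij})$, dispose of the cocycle discrepancy by expressing the relevant automorphisms of imprimitivity bimodules as multiplication by central unitaries (\cite[Proposition 5.7(a)]{RW}) and cancelling them across the balanced tensor products using that all bimodules in sight act trivially on $T$, and then combine the resulting equivalence $\Eq(A,A,\mathfrak F)\simeq\Eq(A',A',\mathfrak F)$ with Corollary~\ref{cor:Morita}. The only cosmetic differences are the direction of conjugation, the slot in which the central unitaries are cancelled, and that you verify the mutual-inverse and group-law compatibility directly (via $\widetilde{N}_i\otimes N_i\cong A\restrict_{F_i}$ and monoidality of the functors) where the paper cites the corresponding computations from Raeburn's article.
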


\begin{remark}
If we added the requirement that $\nu_{ik}\restrict_{F_{ijk}}= \nu_{ij}\restrict_{F_{ijk}}\circ \nu_{jk}\restrict_{F_{ijk}}$ then Corollary \ref{cor:Morita} would imply that $A$ and $A'$ are Morita equivalent relative to $T$, and so of course $\Pic_T A \cong \Pic_T A' $. In general the $2$-cocycle $\nu_{ij}\restrict_{F_{ijk}}\circ \nu_{jk}\restrict_{F_{ijk}}\circ \nu_{ik}\restrict_{F_{ijk}}^*$ is an obstruction to gluing the local Morita equivalences $N_i$ into a global one, but we still obtain an isomorphism of Picard groups. 
\end{remark}

\begin{proof}[Proof of Corollary \ref{cor:Picard}]
For each $i\in I$ let $\widetilde{N}_i$ be the $(A\restrict_{F_i},A'\restrict_{F_i},F_i)$-equivalence bimodule that is dual to $N_i$, and for each $(i,j)$ let $\widetilde{\nu}_{ij}: \widetilde{N}_j\restrict_{F_{ij}} \xrightarrow{\cong} \widetilde{N}_i\restrict_{F_{ij}}$ be the unitary isomorphism that is dual to $\nu_{ij}$, in the sense described, e.g., in \cite[p. 49]{RW}. We claim that the assignment
\[
\left( \left( M_i \right)_{i\in I}, \left(\mu_{ij}\right)_{(i,j)\in I^2}\right) \mapsto \left( \left( \widetilde{N}_i\otimes^*_{A'\restrict_{F_i}} M_i\otimes^*_{A'\restrict_{F_i}} N_i\right)_{i\in I}, \left( \widetilde{\nu}_{ij}\otimes \mu_{ij}\otimes \nu_{ij}\right)_{(i,j)\in I^2}\right)
\]
extends to an equivalence of categories $\functor{N}: \Eq(A',A',\mathfrak F) \xrightarrow{\cong} \Eq(A,A,\mathfrak F)$, given on morphisms by $\functor{N} (\alpha_i)_{i\in I}\coloneqq (\id_{\widetilde{N}_i}\otimes \alpha_i\otimes \id_{N_i})_{i\in I}$. 

To prove this we first recall that the restriction functors $\restrict_F$ are compatible with tensor products and with taking duals, in the sense that, for instance, $\widetilde{N}_i\restrict_{F_{ij}}$ is the dual of $N_i\restrict_{F_{ij}}$, and
\[
\left( \widetilde{N}_i \otimes^*_{A'\restrict_{F_i}} M_i \otimes^*_{A'\restrict_{F_i}} N_i\right) \restrict_{F_{ij}} \cong \widetilde{N}_i\restrict_{F_{ij}}\otimes^*_{A'\restrict_{F_{ij}}} M_i\restrict_{F_{ij}}\otimes^*_{A'\restrict_{F_{ij}}} N_i\restrict_{F_{ij}}
\] via the factor-wise restriction maps. 

To see that the proposed formula for $\functor{N}(M,\mu)$ actually gives an object of $\Eq(A,A,F)$ we must verify that
\begin{multline}\label{eq:Picard-proof}
\left( \widetilde{\nu}_{ij}\restrict_{F_{ijk}} \circ \widetilde{\nu}_{jk}\restrict_{F_{ijk}}\circ \widetilde{\nu}_{ik}\restrict_{F_{ijk}}^*\right) \otimes \left(\mu_{ij}\restrict_{F_{ijk}}\circ \mu_{jk}\restrict_{F_{ijk}}\circ \mu_{ik}\restrict_{F_{ijk}}^*\right) \\ \otimes \left( \nu_{ij}\restrict_{F_{ijk}}\circ \nu_{jk}\restrict_{F_{ijk}} \circ \nu_{ik}\restrict_{F_{ijk}}^*\right)
\end{multline}
is the identity on $\widetilde{N}_i\restrict_{F_{ijk}}\otimes^*_{A'\restrict_{F_{ijk}}} M_i\restrict_{F_{ijk}} \otimes^*_{A\restrict_{F_{ijk}}} N_i\restrict_{F_{ijk}}$. The middle tensor-factor in \eqref{eq:Picard-proof} is the identity on $M_i\restrict_{F_{ijk}}$ because $(M,\mu)$ is an object of $\Eq(A',A',\mathfrak F)$. The right-most tensor-factor in \eqref{eq:Picard-proof} is a unitary bimodule automorphism of the $(A'\restrict_{F_{ijk}}, A\restrict_{F_{ijk}}, F_{ijk})$-equivalence bimodule $N_i\restrict_{F_{ijk}}$, and therefore it is given by multiplication---either on the left or on the right (cf.~\cite[Proposition 5.7(a)]{RW})---by some unitary $f\in C_b(F_{ijk})\cong \Centre\Multiplier(A'\restrict_{F_{ijk}}) \cong \Centre\Multiplier(A\restrict_{F_{ijk}})$. The left-hand tensor-factor in \eqref{eq:Picard-proof}, being the dual of the right-hand factor, is multiplication by $f^*$, and since the tensors are balanced over $A'\restrict_{F_{ijk}}$---and hence (thanks to nondegeneracy) over the multiplier algebra of $A'\restrict_{F_{ijk}}$---we find that \eqref{eq:Picard-proof} is equal to $f^*\otimes \id \otimes f = \id\otimes f^*f\otimes \id = \id\otimes \id\otimes \id$ as required.

It is easy to check that $\functor{N}$ is a functor. Applying the same construction with $N_i$ replaced by $\widetilde{N}_i$, and $\nu_{ij}$ replaced by $\widetilde{\nu}_{ij}$, we obtain another functor $\widetilde{\functor{N}}:\Eq(A,A,\mathfrak F)\to \Eq(A',A',\mathfrak F)$, and the argument of \cite[Lemma 2.10]{Raeburn-Picard} shows that $\functor{N}$ and $\widetilde{\functor{N}}$ are mutually inverse equivalences.

Now $\Pic_T A$ is the set of isomorphism classes in the category $\Eq(A,A,T)$, and Corollary \ref{cor:Morita} identifies this category with $\Eq(A,A,\mathfrak F)$; likewise for $\Pic_T A'$ and $\Eq(A',A',\mathfrak F)$. Thus the equivalence $\functor{N}$ induces a bijection of sets $\Pic_T A' \xrightarrow{\cong}\Pic_T A$, and a computation as in \cite[Proposition 2.12]{Raeburn-Picard} shows that this bijection is a group isomorphism.
\end{proof}

\bibliographystyle{alpha}
\bibliography{descent}

\end{document}